\newif\ifdebug                                                      %
\newcommand{\printname}[1]
   {\smash{\makebox[0pt]{\hspace{-1.0in}\raisebox{8pt}{\tiny #1}}}}
\newcommand{\Label}[1] {\ifdebug {\label{#1}\printname{#1}}
                        \else    {\label{#1}} \fi}
\numberwithin{equation}{section}
\numberwithin{figure}{section}
\newtheorem{Theorem}[equation]{Theorem}
\newtheorem{Lemma}[equation]{Lemma}
\newtheorem{Proposition}[equation]{Proposition}
\newtheorem{Question}[equation]{Question}
\theoremstyle{remark}
\newtheorem{Remark}[equation]{Remark}
\newtheorem{Example}[equation]{Example}
\theoremstyle{corollary}
\newtheorem{Corollary}[equation]{Corollary}
\theoremstyle{definition}
\newtheorem{Definition}[equation]{Definition}
\newtheorem*{Lemma*}{Lemma}
\def \new {\text{new}}
\def \g {{\mathfrak g}}
\def \t {{\mathfrak t}}
\def \R {{\mathbb R}}
\def \C {{\mathbb C}}
\def \CP {{\mathbb C}{\mathbb P}}
\newcommand{\Z}{{\mathbb Z}}
\newcommand{\N}{{\mathbb N}} 
\def \del {{\partial}}
\def \half {\frac{1}{2}}
\def \Inv {^{-1}}
\def \ol {\overline}
\def \ssminus {\smallsetminus}
\def \eps {\epsilon}
\def \veps {\varepsilon}
\def \DH {Duistermaat--Heckman\ }
\def \tDH {\text{DH}}
\newcommand{\ddr}{\frac{\partial}{\partial r}}
\newcommand{\deldelt}{\frac{\partial}{\partial t}}
\def \calK {\mathcal K}
\def \calI {\mathcal I}
\def \bfv {{v^\sharp}}
\def \tM {\widetilde{M}}
\def \tomega {{\widetilde{\omega}}}
\def \tPhi {\widetilde{\Phi}}
\def \rhobar {\ol{\rho}}
\def \Cinf {C^{\infty}}
\def \teta {\widetilde{\eta}}
\def \tA {\widetilde{A}}
\def \std {\text{std}}
\DeclareMathOperator \Vect {Vect}
\DeclareMathOperator \Diff {Diff}
\DeclareMathOperator \supp {supp}
\DeclareMathOperator \Ad {Ad}
\DeclareMathOperator \germ{germ}
\DeclareMathOperator \Lie {Lie}
\DeclareMathOperator \image {image}
\DeclareMathOperator \Crit {Crit}
\DeclareMathOperator \relint{rel-int}
\DeclareMathOperator \LDO{LDO}
\DeclareMathOperator \AGL {AGL}
\newcommand{\hs}{{\hspace{3mm}}}
\newcommand{\hsm}{{\hspace{1mm}}}
\begin{document}

\title[Localization through cobordisms]
{Localization for equivariant cohomology with varying polarization}

\author{Megumi Harada}
\address{Department of Mathematics and Statistics, McMaster University,
Hamilton, Ontario L8S 4K1  Canada }
\email{Megumi.Harada@math.mcmaster.ca}
\urladdr{\url{http://www.math.mcmaster.ca/Megumi.Harada/}}
\thanks{Both authors are partially supported by NSERC Discovery Grants. 
The first author is additionally partially supported by
an NSERC University Faculty Award and an Ontario Ministry of Research
and Innovation Early Researcher Award.}

\author{Yael Karshon}
\address{Department of Mathematics, University of Toronto,
Toronto Ontario M5S 2E4 Canada }
\email{karshon@math.toronto.edu}
\urladdr{\url{http://www.math.toronto.edu/~karshon}} 

\keywords{equivariant cohomology, Hamiltonian $G$-spaces, equivariant 
  localization, proper cobordism,
  Duistermaat-Heckman measure, 
  polytope decomposition} 
\subjclass[2000]{Primary: 55N91; Secondary: 52B99}

\maketitle

\begin{center}
\textit{Dedicated to the memory of Hans Duistermaat} 
\end{center} 


\begin{abstract} 
The main contribution of this paper is a generalization of 
several previous localization theories 
in equivariant symplectic geometry, including the
classical Atiyah-Bott/Berline-Vergne localization theorem, as well as
many cases of
the localization via the norm-square of the momentum map as initiated
and developed by Witten, Paradan, and Woodward. Our version unifies
and generalizes these theories by using noncompact cobordisms as in
previous work of Guillemin, Ginzburg, and Karshon, and by introducing
a more flexible notion of `polarization' than in previous theories. 
Our localization
formulas are also valid for closed $2$-forms $\omega$ that may be
degenerate. As a corollary, we
are able to answer a question posed some time ago by Shlomo Sternberg
concerning the classical Brianchon-Gram polytope decomposition,.
We illustrate our theory using concrete examples motivated by our
answer to Sternberg's question. 
\end{abstract}

\setcounter{tocdepth}{1} 
\tableofcontents

\section{Introduction}\label{sec:introduction}

The main results of this manuscript, Theorems~\ref{theorem:GLS-cobordism}
and~\ref{theorem:twisted-GLS-cobordism}, 
are part of a long chain of localization results in topology. 
Here, by a \emph{localization result} we mean a formula that expresses a
global topological or geometric quantity on a manifold $M$ as a sum of
local contributions near a subset of $M$ such as the fixed point set of a
torus action or of a diffeomorphism, the zero set of a vector field,
or the critical set of a function. 
This idea has a long history; early results of this general nature
are, for instance, the Poincar\'e-Hopf 
index theorem,  the Lefschetz fixed point theorem,
and the early works \cite{Bott1,Bott2} of Bott.
 Other results that relate to our current work
 are \cite{DuiHec82, DuiHec83, BerVer:1982, BerVer:1983, AtiBot:1984, GLS, 
 JefKir:1995, 
           Witten:1992, Par00, Woo05, GGK}. 
To place our results in the appropriate context, below we give a very brief sketch of this circle of ideas, 
focusing on aspects that directly relate to our results.

We begin by discussing the classical Duistermaat-Heckman 
exact stationary phase formula, which is probably the first such 
localization result in modern symplectic geometry. 
We will go back and forth between two different perspectives
that are frequently encountered in the literature 
and which are related by a Fourier transform. 
Let $(M,\omega)$ be a compact symplectic manifold, equipped with an
action of a torus $G$ with associated momentum map 
$\Phi \colon M \to \g^*$. 
In their original papers~\cite{DuiHec82,DuiHec83}, Duistermaat and Heckman
consider the oscillatory integral
\begin{equation} \label{DH integral}
 \int_M \frac{\omega^n}{n!} e^{i\langle \Phi, X\rangle} 
\end{equation}
over $M$ of the function $e^{i\langle \Phi,X\rangle}$ times the Liouville
measure, where $\langle \Phi, X\rangle$ denotes the component of the
momentum map for $X \in \g$. They then prove an \emph{exact stationary
phase formula} which expresses this integral as the sum of
local contributions near the fixed points of the $G$-action. 

Recall that \emph{Liouville measure} on $M$ is obtained by integration
of $\omega^n/n!$, the symplectic volume form.
The \emph{Duistermaat--Heckman} measure on $\g^*$,
which we denote $\tDH_{(M,\omega,\Phi)}$, is the push-forward
of Liouville measure via the momentum map $\Phi \colon M \to \g^*$.
The integral~\eqref{DH integral} is essentially the Fourier transform, denoted
$\widehat{\tDH}_{(M,\omega,\Phi)}$, of the \DH measure.  This can
be seen from the following computation:   for $X \in \g$,
\begin{equation}\label{eq:Fourier transform}
  \begin{split}
    \widehat{\tDH}_{(M,\omega,\Phi)}(-X) & = \int_{\xi \in \g^*}
    \tDH_{(M,\omega,\Phi)} e^{ i \langle \xi, X \rangle} \\
 & = \int_M \frac{\omega^n}{n!} e^{ i \langle \Phi, X \rangle},
 \end{split}
\end{equation}
where the first equality follows from the definition of Fourier transform and 
the second equality follows from the definition of the \DH measure.

The integral~\eqref{DH integral} can also be
interpreted as a push-forward in equivariant cohomology. With this
interpretation, the exact stationary phase formula becomes a special
case of a localization formula in equivariant cohomology. This
important formula was observed by Berline and Vergne and by Atiyah and
Bott, and hence it is often 
referred to as the ``ABBV formula". For details
see~\cite{BerVer:1982, BerVer:1983, AtiBot:1984}.

It is possible to relax some of the assumptions on the manifold $M$
and the $2$-form $\omega$ and still have versions of the exact stationary
phase formula, as we now describe. 
One such relaxation is to allow the closed $2$-form $\omega$
to be degenerate.  The manifold $M$ must then be equipped with an orientation,
so that we can still integrate the symplectic volume form. 
If $M$ is compact 
then the \DH measure remains well-defined as a signed measure.
The exact stationary phase formula continues to hold in this generality;
indeed, it still follows from the ABBV formula.

If the manifold $M$ is not compact, but the momentum map $\Phi$ is
proper, then the \DH measure can be defined as a distribution, even if
$\omega$ is degenerate. In this paper we mainly work with this
\emph{\DH distribution} (see\ Definition~\ref{DH distribution}).
In this situation, there might not exist a
localization formula to the fixed point set; there might not even
be any fixed points. (This can be seen, for instance, from the example
of the action of a torus $G$ on its cotangent bundle $T^*G \cong G
\times \g^*$ with the momentum map being the projection to $\g^*$.)
However if we additionally assume that $\Phi$ has a component that is
proper and bounded from below, then there does exist a localization
formula which expresses the \DH distribution as a sum of contributions
given in terms of infinitesimal data along the components of the
$G$-fixed point set.
Guillemin, Lerman, and Sternberg derived a formula of this form 
for the DH measure when $M$ is compact and $G$ is a torus acting 
with isolated fixed points \cite{GLS}.  
Their proof uses an (inverse) Fourier transform applied
to the left and right hand sides of the original DH formula 
for~\eqref{DH integral}.  
The case of non-isolated fixed points is worked
out by Guillemin and Cannas da Silva in \cite{GuiCdS96};
the case that a component of the momentum map is proper and bounded
from below is analyzed by Prato and Wu~\cite{PraWu94}.

With these historical remarks in hand, we can describe the motivations
and main contributions of the present manuscript. 
One of our major motivations was to further 
develop the point of view (initiated 
in~\cite{GGK1,K} and developed in~\cite{GGK}) that it is
possible to derive the above-mentioned localization formulae
\cite{GLS, GuiCdS96} \emph{directly}, without passing through Fourier
transforms. The main technique for doing so is 
an appropriate notion of noncompact cobordism, under
the hypothesis that a component of the momentum map is proper and
bounded below. Indeed, using cobordisms, one can prove a Guillemin-Lerman-Sternberg-type formula 
in the more general
situation of non-isolated fixed points and non-compact $M$ 
(\cite[Sec.~11]{K}, \cite[Chap.~4, Sec.~6]{GGK}).

Here we take a moment to sketch some of the history of 
cobordisms in symplectic geometry. 
The idea originates from the work of 
Guillemin and Sternberg, who observe in \cite{GS82-preprint} that two
compact Hamiltonian symplectic manifolds have the same
Duistermaat-Heckman measure if they are Hamiltonianly cobordant
through a compact manifold $W$. This is a simple consequence of
Stokes' theorem. 
Shaun Martin used cobordisms in the context of equivariant localization
in \cite{Mar00}. 
Guillemin, Ginzburg, and the second author, in \cite{GGK,K},
allow for {\em non-compact} Hamiltonian manifolds and use noncompact 
cobordisms to derive localization results.  
In this setting, to deal with the lack of compactness, 
one restricts attention to momentum maps that are proper.  Moreover,
one often
makes the stricter requirement that the momentum map is ``polarized",
i.e., it has a component that is proper and bounded from
below. This important technical condition on momentum maps 
first appeared in the work of Prato and Wu \cite{PraWu94}.

Our work generalizes and extends the cobordism ideas in \cite{GGK} 
by introducing the notions of a \textbf{taming map} and a
\textbf{$v$-polarized momentum map}, as we now explain. The basic idea
is that 
we allow the component $v$ of
the momentum map $\Phi$ -- with respect to which the function $\left<
  \Phi , v \right>$ must be proper and bounded from below
(i.e., ``polarized'') -- to \emph{vary} along the manifold. 
More precisely, suppose
$M$ is a $G$-manifold. We fix a \textit{taming map} $v \colon M \to
\g$, so named because we use it to control situations when the
manifold is not compact. Then we require the function obtained by
pairing the $\g^*$-valued momentum map with this taming map to be
proper and bounded from below, giving us the notion of a
\textit{$v$-polarized momentum map}. In the special case that $v
\equiv \eta \in \g$ is constant, we recover the \emph{$\eta$-polarized}
condition of~\cite{GGK}. Our notion of a taming map is motivated by
work of Maxim Braverman \cite{Bra02}, although our assumptions on the map
$v$ slightly differ from his.

We now describe in more detail the main results of this paper. 
Let $G$ be a compact Lie group,
$M$ an even-dimensional oriented $G$-manifold, $\omega$ a
$G$-invariant closed $2$-form (not necessarily symplectic), and $\Phi:
M \to \g^*$ a $G$-equivariant function such that Hamilton's equation 
\[
d\Phi^X = \iota(X^\sharp)\omega
\]
holds for all $X \in \g$. (We call such a triple a \textbf{Hamiltonian
  $G$-manifold}, although -- contrary to the standard use of the term
in the literature -- we do not require $(M, \omega)$ to be
symplectic.) 
A taming map $v$ determines a vector field $v^\sharp$ on $M$ via the
infinitesimal action of the Lie algebra $\g$ on $M$. The localizing
set in our theory is the zero set $Z := \{v^\sharp = 0\} \subseteq M$
of this vector field. 
The main results of our manuscript,
Theorems~\ref{theorem:GLS-cobordism}
and~\ref{theorem:twisted-GLS-cobordism}, express the 
Duistermaat-Heckman distribution of the Hamiltonian $G$-manifold 
$(M,\Phi,\omega)$ (respectively twisted
Duistermaat-Heckman distribution) in terms of local data on
arbitrarily small neighborhoods of $Z$. Namely, 
let \(Z = \bigcup_{i \in \mathcal{I}} Z_i\) 
be the decomposition of $Z$ into its connected components. 
Then our Theorem~\ref{theorem:GLS-cobordism} takes the form
\begin{equation}\label{eq:main-intro}
\tDH_{(M, \Phi, \omega)} = 
\sum_i \tDH_{\germ_{Z_i}(M,\omega,\Phi)}^v
\end{equation}
and, in the twisted case, our Theorem~\ref{theorem:twisted-GLS-cobordism}
takes the form
\begin{equation} \label{eq: main twisted intro}
\tDH_{(M,\omega,\Phi)}(A) = \sum_i
\tDH_{\germ_{Z_i}(M,\omega,\Phi)}^v(A \vert_{Z_i}) 
\end{equation}
where $A$ is an equivariant cohomology class on $M$. The $i$th summand
on the right hand side is defined to be the Duistermaat-Heckman
distribution (respectively twisted Duistermaat-Heckman distribution)
of a $v$-\textbf{polarized completion} relative to $Z_i$ 
(made precise in Definition~\ref{definition:v-polarized-completion}) 
of the restriction of $(M,\omega,\Phi)$ to an arbitrarily small 
$G$-invariant neighbourhood of the connected component $Z_i$ of $Z$. 
The notation $\tDH_{\germ_{Z_i}(M,\omega,\Phi)}^v$ 
(respectively $\tDH_{\germ_{Z_i}(M,\omega,\Phi)}^v(A \vert_{Z_i})$), 
which makes no mention of the choice of a neighbourhood 
or of a $v$-polarized completion, is justified since we prove 
in Section~\ref{sec:DH} (respectively Section~\ref{sec:twistedDH}) 
that under appropriate hypotheses this \DH distribution 
(respectively twisted Duistermaat-Heckman distribution)
is in fact \emph{independent of these choices}.  In this sense the right hand sides
of~\eqref{eq:main-intro} and~\eqref{eq: main twisted intro} 
depend only on data that is localized near $Z$, and  
thus it is valid to view our results as a ``localization to $Z$''.  
We emphasize that the specific form of the right hand side 
varies according to choice of $v$ (and hence $Z$), so actually 
each of~\eqref{eq:main-intro} and~\eqref{eq: main twisted intro}
is a {\em family} of formulas; we explicitly demonstrate this 
using a simple example in Section~\ref{sec:sphere}.

Next we relate our main results to another related circle of ideas, namely
that of the so-called nonabelian localization and localization with
respect to the norm-square of the momentum map. We begin with a brief
historical account. 
The `nonabelian localization' theory was initiated by Witten, who
considers integrals of the form 
 \begin{equation} \label{Witten integral}
\int_{\g} e^{-\varepsilon \|X\|^2} \int_M \eta(X) e^{\omega + i \Phi} dX
 \end{equation}
 where $\eta$ is an equivariant differential form, $dX$ is a volume
 form on $\g$, the $\varepsilon$ is a real parameter, and $\Phi$ again
 denotes the momentum map for a Hamiltonian $G$-action
 (see \cite[p.311]{Witten:1992} and \cite[Section 2]{Liu:1995}). 
 The group $G$ may be nonabelian.
 The integral~\eqref{Witten integral}
 can be interpreted as the evaluation on the Gaussian 
 $e^{-\varepsilon \|X\|^2}$ of a twisted DH distribution
 $\widehat{\tDH}_{(M,\omega,\Phi)}(\eta)$ on $\g$.
 (Mild assumptions on $M$ guarantee that this distribution is a temperate 
 (a.k.a tempered) distribution,
 so its evaluation on the Gaussian is well defined.)
 Witten then gives a formula similar in spirit to the abelian ABBV
 localization formula mentioned above; he expresses 
 \eqref{Witten integral}
 as a sum of local contributions from the components of the critical
 set of $\|\Phi\|^2$ , where the dominant contribution as
 $\varepsilon \to \infty$ is from the absolute minimum
 $\Phi^{-1}(0)$. (In fact Witten's theory is slightly more general: he begins with a localization formula
which depends on a choice of closed invariant 1-form $\lambda$ on $M$
and leads to a sum of local contributions from the components
of the set 
\begin{equation}\label{eq:Witten localizing set}
\left\{ x \ | \ \langle \lambda , \xi^\sharp \rangle = 0 \right\}
\end{equation}
for appropriate vector fields $\xi^\sharp$. 
He then specializes to the case $\lambda = d \| \Phi \|^2$,
for which the set~\eqref{eq:Witten localizing set} is $\Crit \| \Phi
\|^2$.)

Witten's results may be considered an extension of the abelian
Duistermaat-Heckman theory, in that it firstly introduces more 
general integrands (which correspond to the twisted Duistermaat-Heckman
distributions of Section~\ref{sec:twistedDH}), and secondly it 
produces formulas which localize to $\Crit(\|\Phi\|^2)$, the components
of the critical set of the norm-square of $\Phi$, instead of the fixed
points of the action.  Some years later, Jeffrey and Kirwan derived
 similar formulas by working through the maximal torus $T$ of a
 compact nonabelian Lie group $G$; they also explain the relation of
 their formula to that of Witten's in \cite{JefKir:1995}.  Kefeng Liu
 gives simplified proofs of some of these results in
 \cite{Liu:1995}. More recently,
 Paradan \cite{Par00} and Woodward \cite{Woo05} develop a localization
 theory for the norm-square $\|\Phi\|^2$ of the momentum map which
 incorporates the Witten integrals above, deriving formulas for (the
 Fourier transforms of) the twisted Duistermaat--Heckman distributions
 as sums of local contributions associated to components of the
 critical set of $\|\Phi\|^2$.

Our main theorems also generalize 
(the inverse Fourier transforms of) the nonabelian localization
formulas to $\Crit(\|\Phi\|^2)$, 
under the assumption that the connected components of $\Crit(\|\Phi\|^2)$
are smooth.  (We expect this assumption to not be necessary;
see Remark~\ref{technical hypothesis}.)
Namely, we
express the (twisted) \DH distribution as a sum of local contributions. 
Our formulas rely on a choice of taming map $v \colon M \to \g$,
which can be obtained from a choice of a real valued function
$\rho \colon \g^* \to \R$
by~\eqref{eq:v-drho}.   The localizing set is then the critical set
of the composition $\rho \circ \Phi$. 
When $G$ is a torus and $\rho$ is a linear functional, 
as described in our Example~\ref{example:Prato-Wu},
our Theorem~\ref{theorem:GLS-cobordism} 
recovers the so-called GLS formula \cite{GLS}.
When $\rho$ is the norm square function $\|\cdot\|^2 \colon \g^* \to \R$, 
as described in our in Example~\ref{example:norm-square}, 
we recover the localizing set $\Crit(\|\Phi\|^2)$ of Witten, Paradan,
 and Woodward \cite{Witten:1992, Par00, Woo05}.

Our results also answer a question asked some time ago by Shlomo
Sternberg concerning the Brianchon-Gram polytope decomposition. 
It is known that the Atiyah-Bott-Berline-Verge localization
theorem in equivariant cohomology \cite{BerVer:1982, BerVer:1983,
AtiBot:1984}, when applied to the exponent of the equivariant
symplectic form of a compact symplectic toric manifold, yields
the measure-theoretic version of the Lawrence-Varchenko polytope
decomposition \cite{Law91, Var87}, 
when applied to the corresponding momentum polytope. Sternberg had
asked whether there is a similar `localization-theoretic'
interpretation of the classical Brianchon-Gram polytope
decomposition. 
We can answer Sternberg's question in the affirmative: 
a special case of Theorem~\ref{theorem:GLS-cobordism}, applied to
the exponent of the equivariant symplectic form on a compact
symplectic toric manifold, yields (the measure-theoretic
version of) the Brianchon-Gram polytope
decomposition for the momentum polytope. 
However, this application requires the
fact 
that, for any simple polytope, there exists a smooth function
with a unique critical point on the relative interior of every face of
that polytope 
(and satisfies some additional technical conditions). 
If the polytope satisfies a technical assumption (recorded in \eqref{assumption}),
then the norm-square function has this property.
Surprisingly, proving that for \emph{any} simple polytope there exists
such a function turned out to be not entirely trivial, and 
our proof (by brute-force differential topology on $\R^n$) 
occupies Appendix~\ref{sec:appendix}.

In future work, it would be interesting to investigate whether
our results generalize to $G$-spaces that are not manifolds. 
(Such a generalization may correspond 
to polytope decompositions for {\em non-simple} polytopes,
such as Haase's generalization of the Lawrence-Varchenko 
decomposition to non-simple polytopes \cite{Haa05}, 
or the Brianchon-Gram decomposition applied to non-simple polytopes.)
It would also be interesting to see if a combination of our results
with Braverman's work in \cite{Bra02} would yield new index formulas.

We now give an outline of the contents of this paper.
In Section~\ref{sec:taming}, we define
the taming map and the corresponding localizing set and
we describe some of our motivating examples.
In Section~\ref{sec:polarized-completions}, we
define $v$-polarized completions and prove that they exist. 
We note that polarized completions are used both in the formulation 
of our localization formulas~\eqref{eq:main-intro} 
and~\eqref{eq: main twisted intro} and in their proof.
We prove the untwisted version of our localization formula,
Theorem~\ref{theorem:GLS-cobordism}, in Section~\ref{sec:DH} and the
twisted version, Theorem~\ref{theorem:twisted-GLS-cobordism}, in
Section~\ref{sec:twistedDH}. In Section~\ref{sec:BrianchonGram} we
discuss the Brianchon-Gram polytope decomposition and answer
Sternberg's question in the affirmative. In 
Section~\ref{sec:sphere}, we use the standard $S^1$-action on $S^2$ in
order to illustrate in detail how our equation~\eqref{eq:main-intro}
can yield different localization formulas by choosing different taming
maps. We prove a technical lemma required in
Section~\ref{sec:BrianchonGram} in Appendix~\ref{sec:appendix}. 

\medskip

\noindent {\bf Acknowledgments.} 
It is our pleasure to thank Shlomo Sternberg and Jonathan Weitsman
for inspiring questions and conversations. 
We thank both Victor Guillemin and Shlomo Sternberg for
their interest and encouragement in this project. 
We thank Brendan McLellan and Lisa Jeffrey for useful
discussions, 
Chris Woodward for explaining his results to us,
and Mich\`ele Vergne for helping to clarify Witten's work. 
We also thank the two anonymous referees, who read our manuscript
very carefully and made many helpful suggestions for improving
exposition.

\section{Taming maps and Hamiltonian spaces}
\Label{sec:taming}

The ``varying polarization" of our localization formulas
is controlled by a so-called taming map.  In this section
we introduce the notion of a taming map, its associated
vector field, and the corresponding notion of a polarization.
For a symplectic manifold with Hamiltonian $G$ action
and momentum map $\Phi \colon M \to \g^*$, we explain that 
a choice of an invariant smooth function $\rho \colon \g^* \to \R$ 
gives rise to a taming map whose localizing set 
is the critical set of the composition $\rho \circ \Phi$.

\subsection{Taming maps and $\mathbf{G}$-manifolds}
\Label{subsec:taming}

Throughout this manuscript, we use $G$ to denote  
a compact Lie group,
$\g$ its Lie algebra, $\g^*$ the dual space of $\g$, and 
$\left< \cdot , \cdot \right> \colon \g^* \times \g \to \R$
the natural pairing between a vector space and its dual.
We equip $\g$ with an $\Ad$-invariant inner product
and $\g^*$ with the induced inner product.
We also fix an $\Ad_G$-invariant inner product on the Lie algebra $\g$, 
and we denote the resulting isomorphism $\g \to \g^*$
by $\xi \mapsto \widehat{\xi}$.

Let $N$ be a manifold, possibly with boundary.\footnote
{In this manuscript, manifolds are $C^\infty$-smooth, Hausdorff, and
   second-countable.} 
A smooth $G$-action on $N$ is a homomorphism $G \to \Diff(N)$
that is smooth in the diffeological sense, i.e., 
the map $(g,x) \mapsto g \cdot x$ is smooth as a map 
from $G \times N$ to $N$.
A manifold $N$ equipped with a $G$-action is called a \textbf{$G$-manifold}.
If $N$ has boundary, then its boundary $\del N$ is a manifold,
any diffeomorphism of $N$ restricts to a diffeomorphism of $\del N$, 
and a smooth $G$-action on $N$ restricts to a smooth $G$-action on $\del N$.

The following simple notion will be crucial in what follows. 

\begin{Definition}\Label{def:taming-map} 
Let $N$ be a $G$-manifold, possibly with boundary.
A \textbf{taming map} on $N$ is a smooth $G$-equivariant function 
\(v \colon N \to \g \).
\end{Definition}

We now associate to a taming map $v \colon N \to \g$
a vector field $v^\sharp$ on $N$.
Recall that a Lie algebra element $X \in \g$ gives rise to a vector field
$X^\sharp \in \Vect(N)$ by
$$ X^\sharp |_x := \left. \frac{d}{dt} \right|_{t=0} (\exp(tX) \cdot x) $$
for all $x \in N$. The resulting map
\begin{equation}\Label{eq:g-VectN}
\g \to \Vect(N) \quad , \quad X \mapsto X^\sharp \quad ,
\end{equation}
is $G$-equivariant with respect to the adjoint action of $G$ on $\g$
and the $G$-action on the space of vector fields $\Vect(N)$ that is induced 
from the $G$-action on~$N$.
Also, if $N$ has boundary, then the restriction
of the vector field $X^\sharp$ to the boundary $\del N$ 
is a vector field on~$\del N$.
Using this notion, we associate to a taming map $v \colon N \to \g$
a vector field $v^{\sharp}$ on $N$ by
\begin{equation}\Label{eq:bfv}
\bfv|_x := v(x)^\sharp |_x \in T_x N 
\end{equation}
for all $x \in N$. 

The zero set of this vector field $\bfv$ will serve 
as the \emph{localizing set} for our
theory, in the sense that our localization formulas give expressions for 
global invariants (namely, Duistermaat-Heckman distributions) 
in terms of data near the localizing sets: 

\begin{Definition}\Label{def:localizing set} 
The \textbf{localizing set} associated to the taming map $v \colon N \to \g$
is defined by 
$$ Z := \{x \in N \ | \  \bfv|_x = 0\} .$$
\end{Definition}

If $N$ is a $G$-manifold with boundary, $v \colon N \to \g$ is a
taming map,
and $x$ is in the boundary of $N$,
then $v^{\sharp}|_x$ is tangent to the boundary
and $(v^{\sharp})|_{\del N} = (v|_{\del N})^{\sharp}$.

The $G$-equivariance of the taming map
implies that the
associated vector field $\bfv$ and localizing sets also behave well
with respect to the $G$-action:

\begin{Lemma} \Label{lemma:v-equivariant}
Let $N$ be a $G$-manifold, possibly with boundary, 
and let $v \colon N \to \g$ be a taming map. 
Then the associated vector field $v^\sharp$ is $G$-equivariant,
and the localizing set $Z = \{ v^\sharp = 0 \}$ is $G$-invariant.
\end{Lemma}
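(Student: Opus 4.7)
The plan is to deduce $G$-equivariance of $v^{\sharp}$ from two equivariance properties that are already in hand, and then to derive $G$-invariance of $Z$ as an immediate corollary.

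First I would unpack the definitions. For $g \in G$ and $x \in N$, the value $v^{\sharp}|_x = v(x)^{\sharp}|_x$ is, by~\eqref{eq:bfv} and the definition of the infinitesimal action, the velocity at $t=0$ of the curve $t \mapsto \exp(t v(x)) \cdot x$. I would then push this tangent vector forward by the diffeomorphism $x \mapsto g \cdot x$ and use the standard manipulation
\[
g \cdot \exp(tv(x)) \cdot x \;=\; \exp\bigl(t \Ad(g) v(x)\bigr) \cdot (g \cdot x),
\]
which follows from $g \exp(tX) g^{-1} = \exp(t \Ad(g) X)$. Differentiating at $t = 0$ gives $(dg)_x(v^{\sharp}|_x) = \bigl(\Ad(g)\, v(x)\bigr)^{\sharp}\big|_{g \cdot x}$. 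This is precisely the statement that the map $\g \to \Vect(N)$, $X \mapsto X^{\sharp}$, is $G$-equivariant, as noted just after~\eqref{eq:g-VectN}.

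The second ingredient is the hypothesis that $v$ is $G$-equivariant, that is, $v(g \cdot x) = \Ad(g)\, v(x)$. Substituting this into the previous display yields
\[
(dg)_x\bigl(v^{\sharp}|_x\bigr) \;=\; v(g \cdot x)^{\sharp}\big|_{g \cdot x} \;=\; v^{\sharp}|_{g \cdot x},
\]
which is the asserted $G$-equivariance of $v^{\sharp}$. For the second conclusion, if $x \in Z$, then $v^{\sharp}|_x = 0$, so applying $(dg)_x$ (a linear isomorphism) gives $v^{\sharp}|_{g \cdot x} = 0$, hence $g \cdot x \in Z$; thus $Z$ is $G$-invariant.

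There is essentially no obstacle here: the proof is a two-line calculation combining the equivariance of the infinitesimal-action map with the equivariance of the taming map. The only point that warrants a word of care is the boundary case, but the remark immediately preceding the lemma already notes that $v^{\sharp}|_{\partial N} = (v|_{\partial N})^{\sharp}$ and that the infinitesimal-action construction restricts to the boundary, so the same argument applies verbatim to points $x \in \partial N$.
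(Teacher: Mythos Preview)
Your proof is correct and follows essentially the same route as the paper: push forward $v^\sharp|_x$ by $g$, use the $G$-equivariance of $X \mapsto X^\sharp$ to get $(\Ad_g v(x))^\sharp|_{g\cdot x}$, then use the $G$-equivariance of $v$ to conclude. The paper's argument is slightly terser (it simply cites the equivariance of~\eqref{eq:g-VectN} rather than deriving it via $g\exp(tX)g^{-1}=\exp(t\Ad_g X)$), but the logic is identical.
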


\begin{proof}
For all \(x \in N\),
\begin{align*}
 g_* (\bfv|_x) &= \left( \Ad_g v(x) \right)^\sharp_{g \cdot x}
      & \text{by $G$-equivariance of~\eqref{eq:g-VectN} } \\
   &= \left( v(g \cdot x) \right)^\sharp_{g \cdot x}
      & \text{by $G$-equivariance of $v \colon N \to \g$ } \\
   &=  v^\sharp|_{g \cdot x} & \text{by~\eqref{eq:bfv}} ,
\end{align*}
where $g_* \colon TN \to TN$ denotes the differential of the
diffeomorphism $g \colon N \to N$.
Hence the vector field $\bfv$ is $G$-invariant, as desired. 
The $G$-invariance of $Z$ follows from the equivariance of $v^\sharp$.
\end{proof}

\begin{Remark} \Label{Rk:braverman} 
We are not the first to use the term \emph{taming map}. In \cite{Bra02}
Braverman considers 
a complete Riemannian manifold $N$, equipped with an
  action of a compact Lie group $G$ by isometries. In this setting he
  calls a function \(v \colon N \to \g\) 
a \emph{taming map} if the zero set of the induced vector field is
  compact.  In \cite[Def.~3.2]{Bra02} he requires a cobordism of such
  structures to be consistent in a suitable sense with a choice of
  tubular neighbourhood of the boundary.  Our definitions are slightly
  different from Braverman's in that we do not equip $N$
  with a Riemannian metric, and, more significantly, 
  we do not require the localizing set to be compact.
\end{Remark}

\begin{Remark}
The localizing set $Z$ is not necessarily smooth.
See Remark~\ref{Z not manifold}.
\end{Remark}

As in the book~\cite{GGK}, 
we work with possibly noncompact $G$-manifolds $N$ 
equipped with maps $\Phi \colon N \to \g^*$.
We deal with the non-compactness of $N$
by requiring a properness condition:

\begin{Definition}  \Label{def:vpolarized}
Let $N$ be a $G$-manifold, possibly with boundary, 
and let $v \colon N \to \g$ be a taming map. 
We say that a continuous function
$\Phi \colon N \to \g^*$ is  {\it \textbf{v}}\textbf{-polarized} 
if the function 
$$ \Phi^v := \left< \Phi, v \right> \colon N \to \R $$
is proper and bounded from below.
\end{Definition} 

Definition~\ref{def:vpolarized} is well-suited for our purposes 
for two reasons.  First, being $v$-polarized frequently
implies that the original map $\Phi$ is proper (see\
Lemma~\ref{lemma:polarized-proper} below). Second,
being $v$-polarized is preserved under patchings by a
partition of unity or averaging with respect to compact group actions
(see\ Section~\ref{sec:polarized-completions}).

We make the following purely topological observations:

\begin{Lemma} \Label{lemma:polarized-proper}
Let $N$ be a $G$-manifold, possibly with boundary.
Let $v \colon N \to \g$ 
be a taming map 
and let $\Phi \colon N \to \g^*$ be a continuous function.
\begin{enumerate}
\item
If $N$ is compact, then $\Phi$ is $v$-polarized.
\item
Suppose that $\Phi \colon N \to \g^*$ is $v$-polarized.
Let $Y$ be a subset of $N$. 
Then the restriction to $Y$ of $\Phi$ is $v$-polarized 
if and only if $Y$ is closed in $N$.

\item
Suppose that $v$ is bounded.  Then
\[
\Phi \textup{ is $v$-polarized} \Rightarrow \Phi \textup{ is proper.}
\]
\end{enumerate}
\end{Lemma}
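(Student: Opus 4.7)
The plan is to treat the three parts independently; they are purely topological and do not involve the $G$-action beyond the definition of $\Phi^v = \langle \Phi, v \rangle \colon N \to \R$, which is continuous as the pairing of continuous maps. For (1), when $N$ is compact, any continuous real-valued function on $N$ is bounded, hence bounded from below, and is automatically proper because preimages of closed sets are closed in a compact domain, hence compact.

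For (2), I will prove the two implications separately. Assuming $Y$ is closed in $N$, the function $\Phi^v|_Y$ inherits the lower bound from $\Phi^v$; and for any compact $K \subset \R$, the preimage $(\Phi^v|_Y)^{-1}(K) = Y \cap (\Phi^v)^{-1}(K)$ is a closed subset of the compact set $(\Phi^v)^{-1}(K)$, hence compact. For the converse I argue by contrapositive: if $Y$ is not closed in $N$, pick $x \in \overline{Y} \setminus Y$ and a sequence $(y_n)$ in $Y$ with $y_n \to x$, which exists because manifolds are metrizable. Continuity of $\Phi^v$ gives $\Phi^v(y_n) \to \Phi^v(x)$, so for $K = [\Phi^v(x) - 1, \Phi^v(x) + 1]$ the tail of $(y_n)$ lies in $(\Phi^v|_Y)^{-1}(K)$. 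If this preimage were compact in $Y$, a subsequence of $(y_n)$ would converge in $Y$; uniqueness of limits in $N$ would force this limit to be $x$, contradicting $x \notin Y$. Hence $\Phi^v|_Y$ fails to be proper.

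For (3), the key input is the Cauchy--Schwarz inequality applied to the pairing between $\g^*$ and $\g$ with respect to the fixed invariant inner products, which gives $|\Phi^v(x)| \leq \|\Phi(x)\| \cdot \|v(x)\|$. Let $C$ be a uniform bound on $\|v\|$, and for a compact $K \subset \g^*$ set $R = \sup_{k \in K} \|k\|$. Then $\Phi^{-1}(K) \subseteq (\Phi^v)^{-1}([-CR, CR])$; the right-hand side is compact by properness of $\Phi^v$, and $\Phi^{-1}(K)$ is closed by continuity of $\Phi$, so it is a closed subset of a compact set and therefore compact.

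None of the three parts presents a genuine obstacle; the step requiring the most care is the converse direction of (2), where the use of sequences must be justified by the second-countability (equivalently, metrizability) of $N$, a standing assumption recorded in the paper's footnote on manifolds.
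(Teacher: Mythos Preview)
Your proof is correct and follows essentially the same approach as the paper's: part (1) by compactness, part (3) by Cauchy--Schwarz and the bound on $v$, and part (2) by showing that properness of $\Phi^v|_Y$ forces $Y$ to contain its accumulation points. The only cosmetic difference is that in the converse direction of (2) you argue via sequences (invoking metrizability) whereas the paper argues directly with accumulation points, but the underlying idea is identical.
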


\begin{proof}
Part (1) follows from the fact that
every continuous function on a compact set is bounded and proper.

For Part (2) recall that for a proper map $\psi \colon N \to \R$
on a Hausdorff space $N$ and a subset $Y$ of $N$,
the restriction $\psi|_Y \colon Y \to \R$ is proper
if and only if $Y$ is closed.
Indeed, if $Y$ is closed then $\psi\Inv([a,b]) \cap Y$
is compact for any interval $[a,b]$. This implies $\psi \vert_Y$ is
proper. Now suppose that $\Psi|_Y$ is proper,
and let $x$ be an accumulation point of $Y$.
Let $a$ and $b$ be such that $a < \psi(x) < b$.  
Then $x$ is also an accumulation point of $\Psi\Inv([a,b]) \cap Y$.
Because $\psi \vert_Y$ is proper, $\Psi\Inv([a,b]) \cap Y$ is closed.
So $x$ is in $Y$.
Because $x$ was arbitrary, this shows that $Y$ is closed. Claim (2)
follows.

We now prove part (3).
Choose an inner product on $\g$, and consider the induced inner product
on $\g^*$. Then for any \(x \in N,\) 
\begin{align}
 \left| \left<  \Phi(x) , v(x) \right> \right|
 & \leq \| \Phi(x) \| \cdot \| v(x) \|
 & \text{by the Cauchy-Schwarz inequality} \nonumber \\
    & \leq c \| \Phi(x) \| & \Label{CS} 
\end{align}
where $c := \sup\limits_{x \in N} \| v(x) \| < \infty$
exists because $v$ is bounded by assumption.
Now let $K$ be a compact subset of $\g^*$ .  By \eqref{CS},
$\Phi(x) \in K$ implies that 
$\left| \left< \Phi(x) , v(x) \right> \right| \leq c r$,
where $r = \sup\limits_{\alpha \in K} \| \alpha \|$.
So the $\Phi$-preimage of $K$ is contained in the 
$\left< \Phi , v \right>$-preimage of the interval
$[-cr,cr]$, which is compact because $\left< \Phi , v \right>$
is proper by assumption.  Being a closed subset of a compact set,
$\Phi\Inv(K)$ is also compact, as required. 
\end{proof}

\begin{Remark} \Label{proper not polarized}
The converse of part (3) of Lemma~\ref{lemma:polarized-proper}
is generally false:  a proper map to $\g^*$ need not be $v$-polarized,
even if $v$ is bounded. 
For example, the identity map on $N=\g^*$ is proper but is not $v$-polarized
if $v \colon N \to \g$ is constant.
\end{Remark}

\begin{Remark} \Label{rk:equivalent}
In Sections~\ref{sec:DH} and~\ref{sec:twistedDH}
we derive localization formulas that depend on a choice
of taming map.  However, 
the role played by this choice is quite loose in the sense that many choices of $v$ give the same localization
formulas.
Specifically, we may define two taming maps $v_1$ and $v_2$ 
on a $G$-manifold $N$ to be equivalent
if there exists a $G$-invariant positive function $f \colon N \to \R_{>0}$
such that both $f$ and $1/f$ are bounded
and such that $v_2 = f v_1$.
If $v_1$ and $v_2$ are equivalent taming maps then 
\begin{itemize}
\item[--]
they have the same localizing set;
(see\  Definition~\ref{def:localizing set});
\item[--]
a function $\Phi \colon N \to \g^*$ is $v_1$-polarized
if and only if it is $v_2$-polarized. 
\end{itemize} 
Equivalent bounded taming maps give rise
to the same localization formulas
in Sections~\ref{sec:DH} and~\ref{sec:twistedDH}.
See Remarks~\ref{equivalent maps give same loc}
and~\ref{equivalent maps give same loc twisted}.
\end{Remark}

\begin{Remark} \Label{squish v}
We often require taming maps to be bounded,
because if $v \colon N \to \g$ is bounded
then every $v$-polarized map $N \to \g^*$ is proper 
(by Part (3) of Lemma~\ref{lemma:polarized-proper}).
On the other hand, sometimes it is more natural to begin with a
taming map $v \colon N \to \g$ that is unbounded
(see\ Example~\ref{example:norm-square}).
In this situation, we can replace $v$ with 
a bounded taming map $v^b \colon N \to \g^*$ by defining
$$ v^b(x) := h(\| v(x) \|) \, v(x) $$
where $\| \cdot \|$ is an $\Ad_G$-invariant norm on $\g$
and $h \colon \R_{\geq 0} \to \R_{\geq 0}$ is a smooth function
such that $h(r)=1$ for $r$ near $0$, \ $h(r) = 1/r$ for $r \geq 1$,
and such that the function $r \mapsto h(r) \cdot r$ is weakly monotone.
We can then derive a localization formula using the taming map $v^b$.
Different choices of the function $h$ result in bounded taming maps $v^b$ 
that are equivalent in the sense of Remark~\ref{rk:equivalent}.
Moreover, if $v$ was already bounded, then $v^b$ is also equivalent to $v$.
In this sense we can get a localization formula from \emph{any}
taming map $v \colon N \to \g$.
\end{Remark}

\begin{Remark}
Braverman works with a similar though not identical freedom
in \cite{Bra02}.
He needs his map $v \colon N \to \g$ to be sufficiently large in a
suitable sense. 
He achieves this by multiplying $v$ by a real valued function
that grows sufficiently fast, but his formulas are independent
of the choice of this function.  
\end{Remark}

\begin{Remark} 
The equivalence relation of Remarks~\ref{rk:equivalent}
and~\ref{squish v} is still finer than necessary for our purposes in
the sense that many inequivalent taming maps still give rise to the
same localization formula. For example, suppose that a torus $T$
acts on a compact symplectic manifold with a finite fixed point set. 
Let $\eta$ be an element of the Lie algebra of $T$
whose pairings with all the isotropy weights at all the fixed points
are nonzero.  When the taming map takes the constant value $\eta$,
our localization formula, Theorem~\ref{theorem:GLS-cobordism},
boils down to the Guillemin-Lerman-Sternberg formula \cite{GLS}. 
The right hand side of this formula depends only on the signs
of the pairings of $\eta$ with the isotropy weights.
\end{Remark}

\subsection{Taming maps on Hamiltonian $G$-manifolds}
\Label{subsec:rho-examples}

In this section we focus our attention on Hamiltonian
$G$-manifolds, and we discuss some motivating examples. 
As before, we denote by $\Phi^X$ the
$X$-component of a function $\Phi \colon N \to \g^*$, i.e., 
$\Phi^X(\cdot) := \langle \Phi(\cdot), X\rangle \colon N \to \R$ for
$X \in \g$. 

We note that in our definition of Hamiltonian $G$-manifold
the $2$-form is allowed to be degenerate. 

\begin{Definition}\Label{def:Hamiltonian-space}
Let $N$ be an oriented $G$-manifold, possibly with boundary. 
Let $\omega$ be a $G$-invariant closed $2$-form,
and let $\Phi \colon N \to \g^*$ be a $G$-equivariant function 
such that Hamilton's equation
\begin{equation}\Label{eq:Hamiltons}
 d \Phi^X  = \iota(X^\sharp) \omega
\end{equation}
holds for all \(X \in \g.\) 
Such a triple $(N,\omega,\Phi)$ is called a \textbf{Hamiltonian 
$\mathbf{G}$-manifold},
and the map $\Phi \colon N \to \g^*$ is called a \textbf{momentum map}.
\end{Definition}

In the symplectic geometry literature, the term ``Hamiltonian
$G$-manifold'' is usually reserved for $G$-actions on
\textit{symplectic} manifolds, i.e., the closed $2$-form $\omega$ is
additionally required to be nondegenerate. When the form $\omega$ is
allowed to be degenerate as in our
Definition~\ref{def:Hamiltonian-space}, some authors (e.g.\ Woodward
in \cite[Section 3.1]{Woo05}) call the structure a ``degenerate
Hamiltonian $G$-manifold''. As in the book \cite{GGK} (see, e.g.,
\cite[Chap.~2, \S 1.1]{GGK}), we deviate slightly from this
terminology, for two reasons. First, our localization formulas are
also valid for closed $2$-forms that are somewhere degenerate. Second,
because the derivation of our formulas uses cobordisms, we work with
both even- and odd-dimensional manifolds, and a closed $2$-form on an odd-dimensional manifold is everywhere degenerate.

\begin{Remark}
In Definition~\ref{def:Hamiltonian-space}, if $N$ is a manifold with boundary,
the restrictions of $\omega$ and $\Phi$ to the boundary also satisfy 
Hamilton's equations.
\end{Remark}

Since our definition of Hamiltonian $G$-spaces does not include
assumptions of properness of the momentum map nor nondegeneracy of
the 2-form, we use the following additional terminology:

\begin{Definition} \Label{proper nondegenerate}
We say that a Hamiltonian $G$ manifold $(N,\omega,\Phi)$ is \textbf{proper}
if the momentum map $\Phi \colon N \to \g^*$ is proper;
we say that it is \textbf{nondegenerate} if the closed $2$-form $\omega$ is
nondegenerate and, unless we say otherwise,
the orientation of $N$ is induced from $\omega$.
\end{Definition}

Given a Hamiltonian $G$-space $N$, the main technical idea of our constructions
in the next section is to 
associate to a momentum map $\Phi \colon N \to \g^*$
a taming map $v$ 
such that $\Phi$ is $v$-polarized, and
then to vary the $2$-form and momentum map 
(on $N$ as well as on an appropriate cobording manifold)
while maintaining the taming map that was built
from the original momentum map.
As a first step we now describe a way to obtain a taming map from a momentum map.

We can view the differential of a smooth function
$ \rho \colon \g^* \to \R $
as a function $ d\rho \colon \g^* \to \g$,  
since for any $\alpha \in \g^*$ the differential $d\rho|_\alpha$ at
$\alpha$ is an element of
$\operatorname{Hom}(T_\alpha \g^* , \R) \cong \g$. 
If $\rho$ is $G$-invariant, then $d\rho$ is $G$-equivariant.
Thus, given a $G$-invariant smooth function $\rho \colon \g^* \to \R$
and a momentum map $\Phi \colon N \to \g^*$,
we may compose $\Phi$ with the differential $d\rho$ to obtain
a taming map 
\begin{equation}\Label{eq:v-drho}
 v := d\rho \circ \Phi \colon N \to \g .
\end{equation}
Notice that if $G$ is abelian then 
the $G$ action on $\g^*$ is trivial so
every function $\rho \colon \g^* \to \R$ is $G$-invariant. 
We will need the following two technical lemmas:

\begin{Lemma} \Label{vsharp is Ham vf}
Let $(N,\Phi,\omega)$ be a Hamiltonian $G$-manifold and 
let $\rho \colon \g^* \to \R$ be a $G$-invariant smooth function. 
Let $v = d\rho \circ \Phi$ be the corresponding taming map as
in~\eqref{eq:v-drho} and let $v^\sharp$ be the associated 
vector field on $N$.
Then 
\begin{enumerate} 
\item $\bfv$ satisfies Hamilton's equation for the function
$\rho \circ \Phi \colon N \to \R$, and 
\item if $\omega$ is non-degenerate, 
the localizing set coincides with the critical set 
of the function $\rho \circ \Phi$:
\begin{equation} \Label{Z=Crit}
 Z  := \{ v^\sharp = 0 \} = \operatorname{Crit} (\rho \circ \Phi). 
\end{equation}
\end{enumerate}
\end{Lemma}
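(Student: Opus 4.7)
The plan is to prove Part (1) by a direct unraveling of the chain rule for $\rho \circ \Phi$, combined with the Hamilton equation~\eqref{eq:Hamiltons} applied pointwise; Part (2) will then follow immediately from Part (1) together with the non-degeneracy of $\omega$.

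For Part (1), I would fix a point $x \in N$ and a tangent vector $\xi \in T_x N$, and compute $d(\rho \circ \Phi)|_x(\xi)$ using the chain rule. Because $d\rho|_{\Phi(x)}$ viewed under the natural identification $\operatorname{Hom}(T_{\Phi(x)}\g^*, \R) \cong \g$ is, by the very definition~\eqref{eq:v-drho}, the element $v(x) \in \g$, one gets
\[
d(\rho \circ \Phi)|_x(\xi) \;=\; d\rho|_{\Phi(x)}\bigl(d\Phi|_x(\xi)\bigr) \;=\; \langle d\Phi|_x(\xi),\, v(x) \rangle \;=\; d\Phi^{v(x)}|_x(\xi),
\]
where in the last equality $v(x) \in \g$ is treated as a \emph{fixed} Lie algebra element (this is legitimate because we are working at the single point $x$). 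Now I apply Hamilton's equation~\eqref{eq:Hamiltons} with $X := v(x)$: this gives $d\Phi^{v(x)}|_x = \iota(v(x)^\sharp|_x)\,\omega|_x$. But $v(x)^\sharp|_x = v^\sharp|_x$ by the definition~\eqref{eq:bfv} of $v^\sharp$, so combining these identities yields $d(\rho\circ\Phi)|_x(\xi) = \iota(v^\sharp|_x)\,\omega|_x(\xi)$. Since $x$ and $\xi$ were arbitrary, this proves the identity of $1$-forms $d(\rho\circ\Phi) = \iota(v^\sharp)\,\omega$, which is Hamilton's equation for the function $\rho\circ\Phi$.

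For Part (2), assume $\omega$ is non-degenerate. Then at each point $x$ the map $\xi \mapsto \iota(\xi)\omega|_x$ is an isomorphism $T_x N \to T_x^* N$. By Part (1), $v^\sharp|_x = 0$ if and only if $\iota(v^\sharp|_x)\omega|_x = 0$, which by the isomorphism is equivalent to $d(\rho\circ\Phi)|_x = 0$, i.e.\ $x \in \operatorname{Crit}(\rho \circ \Phi)$. This gives the equality~\eqref{Z=Crit}.

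The only step that even deserves the name ``obstacle'' is the bookkeeping in Part (1): one must be careful that Hamilton's equation is stated for \emph{constant} $X \in \g$, while here $v(x)$ genuinely varies with $x$. The point is that $d(\rho\circ\Phi)|_x$ is evaluated at a single point, so freezing the value $X := v(x)$ is enough and no derivatives of $v$ appear. Everything else is formal.
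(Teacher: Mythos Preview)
Your proof is correct and follows essentially the same approach as the paper: apply the chain rule pointwise, identify $d\rho|_{\Phi(x)}$ with $v(x)$, and invoke Hamilton's equation~\eqref{eq:Hamiltons} with $X = v(x)$ frozen at the point. Your additional remark that no derivatives of $v$ enter because the computation is pointwise is a helpful clarification that the paper leaves implicit.
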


\begin{proof}
At each point $x \in N$,
\begin{align*}
 d(\rho \circ \Phi) |_x & = d\rho|_{\Phi(x)} (d\Phi|_x)
 \qquad \text{by the chain rule} \\ 
 & = \langle v(x) , d\Phi|_x \rangle
 \qquad \text{by the definition of $v$} \\
 & = \iota(v^\sharp) \omega|_x 
 \qquad \text{by the definition of $v^\sharp$ and Hamilton's equation
   for $\Phi$}. 
\end{align*}
The second assertion follows from the first assertion
by the non-degeneracy of $\omega$. 
\end{proof}

The next lemma describes the localizing set $Z$ in
terms of orbit type strata. We will need the following terminology.
Let $(N,\omega,\Phi)$ be a non-degenerate Hamiltonian $G$-manifold.
Given $x$ in $N$, let $G_x$ denote the stabilizer subgroup in $G$ of
$x$, let $\g_x$ denote the Lie algebra of $G_x$ and let $\g_x^0$ denote the
annihilator of $\g_x$ in $\g^*$. Since $\omega$ is non-degenerate,
Hamilton's equation~\eqref{eq:Hamiltons} implies that
\begin{equation} \Label{imagedPhi} 
\image d\Phi|_x = \g_x^0 . 
\end{equation} 
Now
suppose that $G$ is a torus and let $S$ denote the orbit type stratum
through a point $x \in N$. Then $S$ is the connected component of $x$
in the subset \[ \{x' \in N \mid G_{x'} = G_x \} \] consisting of
points with the same stabilizer as $x$. The image of $S$ under $\Phi$
is an open subset of the affine plane $\Phi(x) + \g_x^0 \subset \g^*$
\cite{GuiSte:convexity}. (For example, if $N$ is a toric variety,
$\Phi(S)$ is the relative interior of a face of the momentum map
polytope.) Hence 
\begin{equation} \Label{tangent} 
T_{\Phi(x)} \Phi(S) = \g_x^0. 
\end{equation}

\begin{Lemma}\Label{lemma:crit-rho}
Let $G$ be a torus, $(N,\omega,\Phi)$ a non-degenerate Hamiltonian
$G$-manifold, 
$\rho \colon \g^* \to \R$ a $G$-invariant smooth function, 
$v = d\rho \circ \Phi$ the corresponding taming map
as in~\eqref{eq:v-drho},
and $Z = \{ \bfv = 0 \}$ the corresponding localizing set. 
Let $x \in N$, and let $S$ be the orbit type stratum that contains $x$.
Then
$$ x \in Z \text{ \ if and only if \ } 
   \Phi(x) \text{ \ is a critical point for \ } \rho|_{\Phi(S)} .$$
\end{Lemma}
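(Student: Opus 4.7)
The plan is to translate both sides of the claimed equivalence into statements about where the element $d\rho|_{\Phi(x)} \in \g$ sits with respect to the isotropy Lie algebra $\g_x$, and then observe that these statements coincide.

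First, I would unpack the condition $x \in Z$. By definition of $v^\sharp$, we have
\[
 v^\sharp|_x = v(x)^\sharp|_x = \bigl(d\rho|_{\Phi(x)}\bigr)^\sharp\big|_x.
\]
Since for any $X \in \g$ the vector $X^\sharp|_x$ vanishes if and only if $X$ lies in $\g_x$, the condition $x \in Z$ is equivalent to
\[
 d\rho|_{\Phi(x)} \;\in\; \g_x.
\]

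Next I would unpack the condition that $\Phi(x)$ is a critical point of $\rho|_{\Phi(S)}$. Since $\Phi(S)$ is smooth (being, by the convexity results cited in the paragraph preceding the lemma, an open subset of an affine subspace of $\g^*$), $\Phi(x)$ is a critical point of $\rho|_{\Phi(S)}$ if and only if the differential $d\rho|_{\Phi(x)} \colon T_{\Phi(x)}\g^* \to \R$ annihilates the subspace $T_{\Phi(x)}\Phi(S)$. By the displayed identification~\eqref{tangent}, which was established just before the lemma using nondegeneracy of $\omega$ and Hamilton's equation, we have $T_{\Phi(x)}\Phi(S) = \g_x^0$. Thus $\Phi(x)$ is a critical point of $\rho|_{\Phi(S)}$ if and only if $d\rho|_{\Phi(x)}$, regarded as an element of $\g \cong (\g^*)^*$, annihilates $\g_x^0$.

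Finally, I would close the loop by invoking the basic duality fact that, for the subspace $\g_x \subseteq \g$, the double annihilator $(\g_x^0)^0$ equals $\g_x$ itself. This gives
\[
 d\rho|_{\Phi(x)} \textup{ annihilates } \g_x^0
 \;\iff\; d\rho|_{\Phi(x)} \in \g_x,
\]
which, combined with the two reformulations above, yields the asserted equivalence. The only subtle step is the canonical identification $\operatorname{Hom}(T_{\Phi(x)}\g^*,\R) \cong \g$ and the matching of the pairing used in it with the pairing defining the annihilator $\g_x^0$; this is routine but is the one place where one must be careful. Everything else is straightforward once one has~\eqref{tangent} in hand.
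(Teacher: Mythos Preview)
Your proof is correct and reaches the same destination as the paper's, but the first step takes a slightly different path. The paper invokes the preceding Lemma~\ref{vsharp is Ham vf} (specifically~\eqref{Z=Crit}) to get $v^\sharp|_x=0 \iff d(\rho\circ\Phi)|_x=0$, then uses $\image d\Phi|_x=\g_x^0$ from~\eqref{imagedPhi} and the chain rule to rewrite this as $d\rho|_{\Phi(x)}$ annihilating $\g_x^0$, and finally applies~\eqref{tangent}. You instead go directly from $v^\sharp|_x=0$ to $d\rho|_{\Phi(x)}\in\g_x$ via the kernel of the infinitesimal action, then use the double annihilator identity $(\g_x^0)^0=\g_x$ to reach the same intermediate condition, and close with~\eqref{tangent}. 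Your route is arguably a touch more elementary in that it does not route the first equivalence through Hamilton's equation and nondegeneracy of $\omega$ (though nondegeneracy is of course still used, via~\eqref{tangent}); the paper's route has the advantage of reusing machinery already set up. Both are clean and essentially equivalent.
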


\begin{proof}
We have 
\begin{align*}
v^\sharp(x) = 0 
   & \text{ \ if and only if \ } d(\rho \circ \Phi)|_x = 0 
             \qquad \text{ by \eqref{Z=Crit} } \\
   & \text{ \ if and only if \ } (d\rho|_{\Phi(x)}) (\g_x^0) = 0 
             \qquad \text{ by \eqref{imagedPhi} } \\
   & \text{ \ if and only if \ } 
             \text{ $\Phi(x)$ is a critical point for } \rho|_{\Phi(S)} 
\qquad \text{ by~\eqref{tangent}.} 
\end{align*}
\end{proof}

 The main localization results for Hamiltonian $G$-manifolds
 that occur in the current literature involve two different localizing
 sets $Z$:
 the critical set for a component of the momentum map,
 and the critical set for the norm-square of the momentum map. We now
 show that these localizing sets (and accompanying properness
 conditions often used in the theory)
 are special cases of our general construction.

\begin{Example}\Label{example:Prato-Wu}
Let $G$ be a torus.  Fix a Lie algebra element $\eta \in \g$,
and consider the corresponding linear functional on the dual space,
$ \rho(\cdot) := \langle \cdot , \eta \rangle \colon \g^* \to \R$. 
Since $G$ is abelian, the coadjoint $G$-action on $\g^*$ is trivial, 
so $\rho$ is a $G$-invariant function. Moreover, 
the differential of $\rho$ is the function $\g^* \to \g$ 
with constant value $\eta$.
Then, for a Hamiltonian $G$-manifold $(N,\omega,\Phi)$,
the corresponding taming map is the function $v \colon N \to \g$
with constant value $\eta$, and 
the function $\Phi^v \colon N \to \R$ is just
the $\eta$-component of the momentum map $\Phi$.
For a generic choice of $\eta \in \g$, the zero set of the vector-field 
$v^\sharp = \eta^\sharp$ coincides with
the set of fixed points for the entire torus $G$, so
\[ Z = N^G. \]
Thus, we recover the classical localizing set of the
original Duistermaat-Heckman theorem. 
Moreover, for this choice of $v$, the momentum map $\Phi$ is $v$-polarized
exactly when its $\eta$-component $\langle \Phi, \eta \rangle$ 
is proper and bounded below. This important condition 
in the theory of momentum maps was
first introduced and analyzed by Prato and Wu in \cite{PraWu94}.
In the book \cite{GGK}, a function $\Phi$ that satisfies this condition with
respect to an element $\eta \in \g$ 
is said to be $\mathbf{\eta}$\textbf{-polarized}.
\end{Example}

 In the next example we allow $G$ to be non-abelian. 

\begin{Example}\Label{example:norm-square}
Let $G$ be a compact Lie group 
and let $(N,\Phi,\omega)$ be a Hamiltonian $G$-space. 
Consider the norm-square function on $\g^*$ 
\begin{equation}
\rho \colon \g^* \to \R \quad , \quad \rho(\xi) = \|\xi\|^2.
\end{equation}
Composing $\Phi$ with the differential of $\rho$, it is
straightforward to compute that
\begin{equation}\Label{eq:v-Phi}
v(x) := d\rho \circ \Phi(x) = 2 \widehat{\Phi}(x),
\end{equation}
where $\widehat{\Phi}(x)$ is the element of $\g$ that corresponds
to the element $\Phi(x)$ of $\g^*$ under the identification
$\g \cong \g^*$.
So $\Phi^v = \langle \Phi, 2\widehat{\Phi}\rangle = 2 \|\Phi\|^2$.
Thus, when $\omega$ is nondegenerate, our theory
recovers the localizing set 
of Witten \cite{Witten:1992}, Paradan \cite{Par00}, and Woodward \cite{Woo05}:
\[
Z := \{ v^\sharp = 0\} = \operatorname{Crit}(\|\Phi\|^2).
\]
Since the norm-square $\|\Phi\|^2$ is proper if and only if $\Phi$ is
proper, 
the momentum map $\Phi$ is $v$-polarized if and only if it is proper. 
(Contrast with Remark~\ref{proper not polarized}.)
\end{Example}

\begin{Remark} \Label{rk:Euler}
In each of the above examples, the $v$-component of $\Phi$ 
is in fact a multiple of the Hamiltonian function $\rho \circ \Phi$:  
indeed, in Example~\ref{example:Prato-Wu}, we have 
\(\Phi^v = \langle \Phi, \eta \rangle = \rho \circ \Phi,\) while in
Example~\ref{example:norm-square}, we have \(\Phi^v = \langle
\Phi, 2\widehat{\Phi}\rangle = 2 \|\Phi\|^2 = 2 (\rho \circ \Phi).\)
These are instances of the following more general statement: 
if $\rho \colon \g^* \to \R$ is homogeneous of degree $k$, then 
\begin{equation} \Label{Euler}
\Phi^v = k(\rho \circ \Phi).
\end{equation}
To see this, for given $x \in N$, setting $\alpha = \Phi(x)$,
\begin{multline*}
\Phi^v(x) = \langle \Phi(x),v(x)\rangle 
= \langle \Phi(x), d\rho|_{\Phi(x)}\rangle = (L_{\alpha}\rho)(\alpha)  
\stackrel{(\star)}{=} k\rho(\alpha) =
k(\rho \circ \Phi)(x).
\end{multline*}
The equality~$(\star)$ is Euler's formula,
which holds for any homogeneous function $\rho$
of degree $k$ on a vector space.
\end{Remark}

We close the section with some observations concerning the smoothness of the localizing set.

\begin{Remark} \Label{Z not manifold}
A localizing set that is associated with a constant taming map
as in Example~\ref{example:Prato-Wu}
is always smooth.
Indeed, for a torus $G$ and a constant taming map 
$v \equiv \eta \in \g$,
the localizing set $Z$ is the fixed point set of the closure
in $G$ of the one-parameter subgroup generated by $\eta$.
Because $Z$ is the fixed point set of a compact group action,
its connected components are smooth submanifolds.

On the other hand, if the taming map $v$ is associated to the norm-square
of a momentum map as in Example~\ref{example:norm-square},
then $Z = \Crit(\|\Phi\|^2)$ need not be smooth.  For example, consider 
$S^2 \times S^2$ equipped with the standard area form on each factor
and the diagonal circle action.
Denote by $N$ and $S$ the north and south poles of $S^2$.
By the local normal form theorem,
we can identify a neighbourhood of the point $(N,S)$ in $S^2 \times S^2$
with a neighbourhood of the origin in $\C^2$, 
where the circle group acts on $\C^2$ with the weights $1, -1$
and with the momentum map 
$\Phi(z,w) = -\frac{1}{2}\|z\|^2 + \frac{1}{2} \|w\|^2$.
The critical set of $\|\Phi\|^2$ on $\C^2$ is the zero level set
$\{ (z,w) \ | \ \|z\| = \|w\| \}$, which is a cone over $S^1 \times S^1$. 
Thus, $Z$ is not smooth at $(N,S)$.
\end{Remark}

\section{Polarized completions}
\label{sec:polarized-completions}

In this section we use the taming maps    
introduced in Section~\ref{sec:taming} in order to introduce and
develop the notion of 
\textbf{polarized completions}. This notion is the technical tool that
allows us to both state and prove our localization formulas
in Sections~\ref{sec:DH} and~\ref{sec:twistedDH}.

We begin with some motivation. Recall from Section~\ref{sec:taming}
that the property of being $v$-polarized is crucial for our theory
due to its link to the properness of $\Phi$. On the other hand, 
in the course of our analysis below, we will encounter 
Hamiltonian $G$-manifolds $(N,\omega,\Phi)$ and taming maps 
$v \colon N \to \g$ such that 
the restriction to $\Phi$ to a closed subset $Y$ of $N$ is $v$-polarized,
but $\Phi$ is not $v$-polarized on all of $N$.  
For example, this may happen if $Y$ is a closed subset (e.g.\ a
localizing set) 
of a $v$-polarized Hamiltonian $G$-manifold
and $N$ is a small open neighbourhood of $Y$.
In such situations we wish to find 
a closed $2$-form $\tomega$ and momentum map $\tPhi$ on $N$
that agree with $\omega$ and $\Phi$ on $Y$
and such that $\tPhi$ is $v$-polarized on $N$.
Since we do not require $Y$ to be a manifold
(see\ Remark~\ref{Z not manifold}), we must first make precise 
what we mean by the condition that differential forms ``agree on $Y$".  
We take the diffeological approach:

\begin{Definition} \label{agree on}
Let $N$ be a manifold and $Y$ a subset of $N$.
Let $\alpha_0$ and $\alpha_1$ be differential forms on $N$,
possibly of mixed degree and with coefficients in a vector space
other than $\R$ 
(such as in the case of equivariant differential forms, as
recalled in Section~\ref{sec:DH}).
We say that $\alpha_0$ and $\alpha_1$ \textbf{agree on} $\mathbf{Y}$
if for any positive integer $k$, any open subset $U$ of $\R^k$,
and any smooth map $p \colon U \to N$ whose image is contained in $Y$,
the pullbacks of $\alpha_0$ and $\alpha_1$ to $U$ coincide, i.e.,  
$p^* \alpha_0 = p^*\alpha_1$ as differential forms on~$U$.
\end{Definition}

\begin{Remark}\label{remark: when agree}
If $\alpha_0$ and $\alpha_1$ agree on a neighbourhood of $Y$ in $N$,
then they agree on $Y$.  If $Y$ is a submanifold of $N$, then
$\alpha_0$ and $\alpha_1$ agree on $Y$ exactly if their pullbacks
to $Y$ coincide.   In practice, these are the only two cases
that we need.
\end{Remark}

We can now define $v$-polarized completions:

\begin{Definition}\label{definition:v-polarized-completion}
Let $(N,\omega,\Phi)$ be a Hamiltonian $G$-manifold,
possibly with boundary, and let $v \colon N \to \g$ be a taming map.
Let $Y$ be a $G$-invariant closed subset of $N$.
Suppose that the restriction of $\Phi$ to $Y$ is $v$-polarized. 
A {\it\textbf{v}}\textbf{-polarized completion of}
$\mathbf{(N,\omega,\Phi)}$ \textbf{relative to} $\mathbf{Y}$
is a Hamiltonian $G$ manifold $(N,\tomega,\tPhi)$
with the same underlying manifold $N$,
such that $\tPhi$ is $v$-polarized and
such that $\tomega+\tPhi$ agrees with $\omega+\Phi$ on $Y$.
\end{Definition}

The following proposition
is the main result of this section. 
By Definition~\ref{agree on} and Remark~\ref{remark: when agree}, 
the proposition 
gives a $v$-polarized completion of $(N,\omega,\Phi)$ relative to $Y$.

\begin{Proposition} \label{proposition:completion}
Let $G$ be a compact Lie group, let $N$ be a $G$-manifold,
possibly with boundary, and let $v \colon N \to \g$ be a taming map. 
Let $Z = \{ v^\sharp = 0 \}$ be the corresponding localizing set.
Let $Y$ be a closed $G$-invariant subset of $N$ that contains $Z$.
Let $\omega$ be a $G$-invariant closed $2$-form on $N$
and $\Phi$ a corresponding momentum map. Suppose that the restriction
of $\Phi$ to $Y$ 
is $v$-polarized. 
Then there exists an invariant closed $2$-form $\tomega$ on $N$
and corresponding momentum map $\tPhi$
that coincide with $\omega$ and $\Phi$ 
on a $G$-invariant neighbourhood of $Y$
and such that $\tPhi$ is $v$-polarized on $N$.
\end{Proposition}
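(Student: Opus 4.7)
The plan is to modify $(\omega,\Phi)$ by an equivariantly exact correction while preserving the Hamiltonian structure. For any $G$-invariant smooth $1$-form $\beta$ on $N$, setting $\tomega := \omega + d\beta$ and $\tPhi^X := \Phi^X - \iota(X^\sharp)\beta$ again yields a Hamiltonian $G$-manifold: Hamilton's equation for $\tPhi$ follows from Cartan's magic formula $L_{X^\sharp}\beta = d\iota(X^\sharp)\beta + \iota(X^\sharp)d\beta$ together with the identity $L_{X^\sharp}\beta = 0$ guaranteed by $G$-invariance of $\beta$. If $\beta$ vanishes on an open neighbourhood of $Y$, then so do $d\beta$ and $\iota(X^\sharp)\beta$, and $(\tomega,\tPhi) = (\omega,\Phi)$ on that neighbourhood. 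Moreover $\tPhi^v = \Phi^v - \beta(v^\sharp)$, so the proposition reduces to finding a $G$-invariant smooth $1$-form $\beta$ that vanishes on a neighbourhood of $Y$ and for which $\Phi^v - \beta(v^\sharp)$ is proper and bounded below on $N$.

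I would realize $\beta$ in the explicit form
\[
\beta := -\frac{\lambda}{h(v^\sharp,v^\sharp)}\, h(v^\sharp,\,\cdot\,),
\]
where $h$ is a $G$-invariant Riemannian metric on $N$ and $\lambda \colon N \to [0,\infty)$ is a $G$-invariant smooth function that vanishes on a neighbourhood of $Y$. Since $Z = \{v^\sharp = 0\} \subseteq Y$, the coefficient $\lambda / h(v^\sharp,v^\sharp)$ is identically zero on an open neighbourhood of $Z$ and smooth elsewhere, so $\beta$ extends to a globally smooth $G$-invariant $1$-form vanishing on the given neighbourhood of $Y$ and satisfying $\beta(v^\sharp) = -\lambda$. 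The whole problem therefore reduces to choosing $\lambda$ so that $\Phi^v + \lambda$ is proper and bounded below on $N$.

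The bulk of the work is producing this $\lambda$. I would first construct a $G$-invariant open neighbourhood $U$ of $Y$ with the properties: (i) $\Phi^v$ is bounded below on $U$, and (ii) $\{x \in U : \Phi^v(x) \leq c\}$ is relatively compact in $N$ for every $c \in \R$. Writing $K_n := \{y \in Y : \Phi^v(y) \leq n\}$, which is compact by the polarization hypothesis on $Y$, I take $U$ to be the union of $G$-invariant relatively compact open neighbourhoods $U_n$ of $K_n$, chosen successively small enough that the oscillation of $\Phi^v$ on $U_n$ is less than $1$; a short uniform-continuity argument then shows that any $x \in U$ with $\Phi^v(x) \leq c$ lies in some $U_n$ with $n \leq \lceil c \rceil + 1$, yielding~(ii), while~(i) follows from the lower bound on $\Phi^v|_Y$ combined with the oscillation control.

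Finally, pick nested $G$-invariant open sets $Y \subseteq U_1 \subseteq \overline{U_1} \subseteq U_2 \subseteq \overline{U_2} \subseteq U$, a $G$-invariant smooth cut-off $\chi$ equal to $0$ on $U_1$ and to $1$ off $U_2$, a $G$-invariant proper smooth function $\rho \colon N \to [0,\infty)$, and a smooth increasing $g \colon [0,\infty) \to [0,\infty)$ with $g(r) \geq r + 2M_{\lceil r \rceil}$, where $M_n := \sup\{|\Phi^v(x)| : \rho(x) \leq n\}$ is finite by compactness of $\{\rho \leq n\}$. Setting $\lambda := \chi \cdot (g \circ \rho)$, a case analysis over the three regions $U_1$, $U_2 \setminus U_1$, and $N \setminus U_2$ --- using (i) and (ii) on the first two and $g \circ \rho + \Phi^v \geq \rho$ on the third --- shows that $\lambda$ vanishes on $U_1$ and that $\Phi^v + \lambda$ is proper and bounded from below, after which the associated $\beta$ completes the proof. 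The main obstacle is the construction of $U$ in the third step: because $Y$ need be neither compact nor smooth, choosing the neighbourhoods $U_n$ to avoid ``spillover'' between levels that would destroy property~(ii) requires a careful uniform-continuity argument.
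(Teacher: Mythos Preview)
Your approach is essentially the paper's: both modify $(\omega,\Phi)$ by an equivariantly exact term built from the Bott projector $\Theta = h(v^\sharp,\cdot)/h(v^\sharp,v^\sharp)$, reducing everything to the real-valued problem of finding a $G$-invariant function $\lambda \geq 0$ vanishing near $Y$ with $\Phi^v + \lambda$ proper and bounded below. The paper phrases this as producing a proper bounded-below $\psi$ that agrees with $\Phi^v$ near $Y$ (its Lemma~\ref{lemma:properFunction}) and sets $\lambda = \psi - \Phi^v$; your $\lambda = \chi \cdot (g \circ \rho)$ is a different but equally valid endgame once the neighbourhood $U$ with properties (i)--(ii) is in hand.

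The one genuine issue is your construction of $U$. As written, ``oscillation of $\Phi^v$ on $U_n$ less than $1$'' is impossible: $U_n$ contains $K_n = \{y \in Y : \Phi^v(y) \leq n\}$, on which $\Phi^v$ already ranges over an interval of length roughly $n - \inf_Y \Phi^v$. What you actually need is control on the \emph{incremental} pieces --- for instance arranging $\Phi^v(U_n \setminus U_{n-1}) \subset (n-2,n+1)$ --- so that the minimal $n$ with $x \in U_n$ is bounded in terms of $\Phi^v(x)$; without such lower control, a point $x \in U_m$ for large $m$ can still have $\Phi^v(x)$ small, and nothing places it in a $U_n$ with bounded $n$. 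The paper's Lemma~\ref{lemma:proper-on-nd} handles exactly this by covering the \emph{slabs} $Y \cap (\Phi^v)^{-1}([\ell,\ell+1])$ (rather than the sublevel sets $K_n$) by finitely many relatively compact sets on each of which $\Phi^v$ varies by less than $1$; membership in the resulting $U_\ell$ then forces $\Phi^v \geq \ell-1$, and (ii) follows at once. With that repair to your third step, your argument goes through and is substantively identical to the paper's.
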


The remainder of this section is devoted to the proof of Proposition
\ref{proposition:completion}.  
We begin with four elementary lemmas about real valued functions.
The first lemma asserts that a convex combination of functions 
that are proper and bounded from below is still proper and bounded from below:

\begin{Lemma}\label{lemma:convex}
Let $N$ be a topological space. Let $f,g \colon N \to \R$ be
continuous functions that are proper and bounded from below.
Let $\rho_1,\rho_2 \colon N \to \R$ 
be continuous functions that satisfy
$\rho_1 \geq 0$, $\rho_2 \geq 0$, and $\rho_1 + \rho_2 \equiv 1$.
Then the function 
$$ \rho_1 f + \rho_2 g \ \colon \  N \to \R $$
is proper and bounded from below.
\end{Lemma}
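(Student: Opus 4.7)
The plan is to prove the two conclusions separately: first boundedness from below, which is essentially immediate, and then properness, which is the more substantive point but still short.

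For boundedness from below, fix constants $a, b \in \R$ such that $f \geq a$ and $g \geq b$ on $N$. Since $\rho_1, \rho_2 \geq 0$ and $\rho_1 + \rho_2 \equiv 1$, we have pointwise
\[
\rho_1 f + \rho_2 g \;\geq\; \rho_1 a + \rho_2 b \;\geq\; \min(a,b),
\]
so the function is bounded below by $\min(a,b)$.

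For properness, since $\rho_1 f + \rho_2 g$ is continuous, its preimage of any compact $K \subset \R$ is closed in $N$, so it suffices to show this preimage is contained in a compact subset of $N$. Any such $K$ is contained in some interval $[-M,M]$, so I will bound the set
\[
E_M := \{ x \in N \mid \rho_1(x) f(x) + \rho_2(x) g(x) \leq M \}.
\]
The key observation is that at every point $x$, either $\rho_1(x) \geq \tfrac{1}{2}$ or $\rho_2(x) \geq \tfrac{1}{2}$. In the first case, using $g(x) \geq b$, the inequality $\rho_1(x) f(x) + \rho_2(x) g(x) \leq M$ yields $\rho_1(x) f(x) \leq M - \rho_2(x) b \leq M + |b|$, hence $f(x) \leq 2(M + |b|)$; combined with $f(x) \geq a$, this puts $x$ in the compact set $f^{-1}([a,\, 2M + 2|b|])$. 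The second case is symmetric and puts $x$ in $g^{-1}([b,\, 2M + 2|a|])$.

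Therefore $E_M$ is contained in the union of these two compact sets (compactness uses that $f$ and $g$ are proper and bounded below). Being closed in $N$ and contained in this compact union, $(\rho_1 f + \rho_2 g)^{-1}(K) \subset E_M$ is compact, establishing properness. There is no real obstacle here; the only mild subtlety is the trick of splitting into the two cases based on which weight is at least $\tfrac{1}{2}$, which is what lets one convert the convex-combination bound into a one-sided bound on $f$ or on $g$.
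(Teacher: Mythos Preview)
Your proof is correct and follows the same overall strategy as the paper: show that a sublevel set of $\rho_1 f + \rho_2 g$ is contained in a union of two compact sublevel sets, one of $f$ and one of $g$. The only difference is in the bounding step: the paper observes directly that a convex combination satisfies $\rho_1 f + \rho_2 g \geq \min(f,g)$, so $\psi(x) \leq b$ forces $f(x) \leq b$ or $g(x) \leq b$, avoiding your case split on which $\rho_i$ is at least $\tfrac12$; this gives the same conclusion with slightly less bookkeeping.
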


\begin{proof}
Let $\psi$ denote the function $ \rho_1 f + \rho_2 g $.
The function $\psi$ is bounded from below by $\min ( \inf f , \inf g )$.  
For any $x \in N$ and $b \in \R$, 
if $\psi(x) \leq b$ then either $f(x) \leq b$ or $g(x) \leq b$.  So,
for any $a < b$, 
$$ \psi\Inv([a,b]) \subset f\Inv([\inf f,b]) \cup g\Inv([\inf g,b]) . $$
The union on the right hand side is compact because $f$ and $g$ are proper.
Being a closed subset of a compact set, $\psi\Inv([a,b])$ is compact.
Because the interval $[a,b]$ was arbitrary, this shows that $\psi$ is proper.
\end{proof}

The second lemma states that 
if a function is proper and bounded from below
then its average with respect to a compact group action
is also proper and bounded below: 

\begin{Lemma} \label{average-proper} Let $G$ be a compact Lie group and $N$ a
  topological space with a $G$-action.  Let $f \colon N \to \R$ be a
  continuous function that is proper and bounded from below.  Then its
  $G$-average $\ol{f} \colon N \to \R$, defined by
\[
\ol{f} (x) := \int_{g \in G} f(g \cdot x) dg
\]
where $dg$ denotes the Haar probability measure on $G$,
is also proper and bounded from below.
\end{Lemma}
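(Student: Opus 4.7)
The plan is to verify three properties of $\ol{f}$ in turn: lower boundedness, continuity, and properness. Lower boundedness is immediate: if $f \geq c$ on $N$, then for every $x$ we have $\ol{f}(x) = \int_G f(g \cdot x)\, dg \geq c$, since $dg$ is a probability measure. Continuity of $\ol{f}$ follows from continuity of the map $(g,x) \mapsto f(g \cdot x)$ on $G \times N$ together with compactness of $G$: for any $x_n \to x$, by restricting to a compact neighbourhood $U$ of $x$ on which $f$ is bounded above (which exists because $G \cdot U$ has compact closure and $f$ is continuous on that closure), the dominated convergence theorem yields $\ol{f}(x_n) \to \ol{f}(x)$.

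The main content is properness. I would fix a compact interval $[a,b] \subset \R$ and show that $\ol{f}^{-1}([a,b])$ is compact; since every compact subset of $\R$ lies inside such an interval, properness follows by taking closed subsets. The key observation is that if $\ol{f}(x) \leq b$ and $f \geq c$ on $N$, then the pointwise minimum on the orbit satisfies $\min_{g \in G} f(g \cdot x) \leq \ol{f}(x) \leq b$, so by continuity of $g \mapsto f(g \cdot x)$ on the compact group $G$ there exists $g_0 \in G$ with $f(g_0 \cdot x) \in [c,b]$. Setting $K := f^{-1}([c,b])$, which is compact by properness of $f$, we have $g_0 \cdot x \in K$ and hence $x \in g_0^{-1} \cdot K \subseteq G \cdot K$. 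Therefore
$$ \ol{f}^{-1}([a,b]) \subseteq G \cdot K. $$
The set $G \cdot K$ is the image of $G \times K$ under the continuous action map, hence compact. Since $\ol{f}$ is continuous, $\ol{f}^{-1}([a,b])$ is closed in $N$, and a closed subset of the compact (Hausdorff) set $G \cdot K$ is compact.

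The only real obstacle is the properness step, and the trick there is to exploit the elementary inequality \emph{minimum $\leq$ average} on the orbit: it converts the global bound $\ol{f}(x) \leq b$ into an orbit-wise intersection with the compact sublevel set $f^{-1}([c,b])$, which is exactly what allows one to trap $\ol{f}^{-1}([a,b])$ inside the compact saturation $G \cdot f^{-1}([c,b])$.
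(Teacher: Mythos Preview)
Your argument is correct and follows essentially the same route as the paper: both use the inequality ``minimum $\leq$ average'' on an orbit to conclude that $\ol{f}^{-1}([a,b]) \subseteq G \cdot f^{-1}([\inf f,\, b])$, then observe that the right-hand side is the continuous image of a compact set and that the left-hand side is closed. One small remark: your continuity argument invokes a compact neighbourhood of $x$, which presupposes local compactness of $N$, an assumption not in the statement; the continuity of $\ol{f}$ can instead be obtained directly from continuity of $(g,x)\mapsto f(g\cdot x)$ and compactness of $G$ via a finite-subcover argument, without any hypothesis on $N$. The paper, for its part, silently uses continuity of $\ol{f}$ to assert closedness of $\ol{f}^{-1}([a,b])$ without proving it, so your inclusion of this step is an improvement in rigour.
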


\begin{proof}
Every lower bound for $f$ is also a lower bound for $\ol{f}$.
For any $x \in N$ and $b \in \R$,
if $\ol{f}(x) \leq b$, then there exists $g \in G$ 
such that $f(g \cdot x) \leq b$, 
and so $x \in g\Inv \cdot (f\Inv([\inf f,b]))$.  Hence, for any $a<b$,
\begin{equation} \label{in union}
 \ol{f}\Inv([a,b]) \subset \bigcup_{g \in G} g\Inv \cdot f\Inv([\inf f,b]).
\end{equation}
The right hand side of~\eqref{in union} is the image of the compact
set $G \times f\Inv([\inf f,b])$ under the continuous map 
$G \times N \to N$ \ , \ $(g,\alpha) \mapsto g\Inv \cdot \alpha$. 
Being a closed subset of a compact set, 
the left hand side is also compact, as desired.
\end{proof}

Next, we show that it is possible to expand slightly the set on which a
function is proper and bounded from below: 

\begin{Lemma}\label{lemma:proper-on-nd}
Let $N$ be a locally compact topological space and 
$f \colon N \to \R$ a continuous function. 
Let $Y \subset N$ be a closed subset, and 
suppose that the restriction $f|_Y \colon Y \to \R$
is proper and bounded from below.
Then there exists an open neighbourhood $U_Y$ of $Y$ in $N$
such that the restriction of $f$ to the closure of $U_Y$ in $N$,
$$ f|_{\ol{U}_Y} \colon \ol{U}_Y \to \R,$$
is also proper and bounded from below.
\end{Lemma}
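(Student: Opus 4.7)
The plan is to construct $U_Y$ as a slightly fattened version of $Y$, built by covering $Y$ with precompact open pieces that respect the level structure of $f$, so that $\overline{U}_Y$ remains locally compact in a controlled way relative to $f$.

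First I would use the fact that $f|_Y$ is proper and bounded below to exhaust $Y$ by ``annular'' compact pieces. Let $m$ be a lower bound for $f$ on $Y$, and for each integer $n$ set
\[
 B_n := Y \cap f^{-1}([n-1,n+1]) ,
\]
which is compact (and empty for $n \leq m-2$). Then $Y = \bigcup_n B_n$, and any bounded $f$-slice $Y \cap f^{-1}([a,b])$ meets only finitely many $B_n$.

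Next I would build the neighbourhood piece by piece. For each nonempty $B_n$, continuity of $f$ and local compactness of $N$ let me cover $B_n$ by a precompact open set $U_n$ with $\overline{U_n}$ compact and $\overline{U_n} \subset f^{-1}((n-2,n+2))$: around each $y \in B_n$ pick a precompact open neighbourhood on which $f$ stays in $(n-2,n+2)$, then extract a finite subcover of $B_n$ and take $U_n$ to be its union. Define $U_Y := \bigcup_n U_n$. This is open, contains $Y$, and the lower bound $f > n-2 \geq m-2$ on each $U_n$ gives $f \geq m-2$ on $\overline{U}_Y$ by continuity.

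The main work is to verify that $f|_{\overline{U}_Y}$ is proper, and the subtlety (which I expect to be the main obstacle) is that in general $\overline{\bigcup_n U_n}$ need not equal $\bigcup_n \overline{U_n}$. I would overcome this by a local argument: given a compact interval $[a,b]$, choose an integer $M$ with $M - 2 > b + 1$. Suppose $x \in \overline{U}_Y \cap f^{-1}([a,b])$ were not contained in the closed set $C := \bigcup_{|n| \leq M} \overline{U_n}$; then $x$ would have an open neighbourhood $V$ disjoint from $C$, while continuity of $f$ gives a further open neighbourhood $V' \subset V$ with $f(V') \subset (a-1,b+1)$. Since $U_n \subset f^{-1}((n-2,n+2))$ is disjoint from $f^{-1}((-\infty,b+1])$ for all $n \geq M$, the neighbourhood $V'$ would be disjoint from every $U_n$, contradicting $x \in \overline{U}_Y$. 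Hence $\overline{U}_Y \cap f^{-1}([a,b]) \subset C$, which is a finite union of compact sets, so $\overline{U}_Y \cap f^{-1}([a,b])$ is a closed subset of a compact set, hence compact. This establishes properness of $f|_{\overline{U}_Y}$ and completes the proof.
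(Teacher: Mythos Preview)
Your proof is correct and follows essentially the same strategy as the paper: slice $Y$ into compact $f$-annuli indexed by integers, cover each by a precompact open set on which the values of $f$ are confined to a bounded window, and take $U_Y$ to be the union. The only presentational difference is that the paper observes the family $\{U_n\}$ is locally finite in $N$ (because $f(U_n)\subset(n-2,n+2)$), so that $\overline{U_Y}=\bigcup_n\overline{U_n}$ directly, which replaces the hands-on closure argument you give for properness; one small point to tidy is the case $n<-M$, which is handled simply by also taking $M$ large enough that all such $U_n$ are empty.
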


\begin{proof}
By local compactness, for each point $y$ in $Y$ we may choose 
an open neighbourhood $U_y$ in $N$ whose closure in $N$ is compact
and such that $|f(u)-f(y)| < 1$ for all $u \in U_y$.

Let $\ell$ be any integer.  Because $f$ is proper on $Y$, 
the intersection $f\Inv([\ell,\ell+1]) \cap Y$
is compact,  so it is covered by finitely many of the sets $U_y$ 
for $y \in f\Inv([\ell,\ell+1]) \cap Y$.
Let $U_\ell$ denote the union of the elements of such a finite cover.
Then $f(U_\ell) \subset [\ell-1,\ell+2]$,
and the closure $\ol{U_\ell}$ is compact, by construction of the
sets $U_y$. 

Consider $U_Y := \bigcup \left\{ U_\ell \ | \ \ell \in \Z \right\}$.
Because the $U_\ell$ form a locally finite collection of subsets of $N$,
the closure of their union is the union of their closures: 
$\ol{U_Y} = \bigcup \left\{ \ol{U_\ell} \ | \ \ell \in \Z \right\}$.

Let $[a,b] \subset \R$ be any interval in $\R$. We wish to show
that $f^{-1}([a,b]) \cap \overline{U_Y}$ is compact. 
First, observe that $f^{-1}([a,b]) \cap \ol{U_\ell}$ is non-empty 
only if $[a,b]$ meets $[\ell-1,\ell+2]$,
which occurs for only finitely many integers $\ell$. 
Hence, the intersection $f^{-1}([a,b]) \cap \ol{U_Y}$
is contained in a finite union of the sets $\ol{U_\ell}$.
Being a closed subset of a finite union of compact sets, 
this intersection is compact.  
This shows that $f$ is proper on $\ol{U_Y}$, as desired. 

Finally, let $B$ be a lower bound for $f$ on $Y$;
then $B-1$ is a lower bound for $f$ on $\ol{U_Y}$, by construction 
of the sets $U_y$. Hence $f$ is also bounded below on $\ol{U_Y}$. 
This completes the proof. 
\end{proof}

The previous lemmas are quite general and apply to 
topological spaces that are not necessarily manifolds.
In preparation for proving
Proposition~\ref{proposition:completion},
we now return to the setting of manifolds
and prove a variant of Proposition~\ref{proposition:completion}
that applies to real-valued functions:

\begin{Lemma}\label{lemma:properFunction}
Let $N$ be a $G$-manifold, possibly with boundary. 
Let $Y$ be a $G$-invariant closed subset of $N$, 
and let $f \colon N \to \R$ be a smooth $G$-invariant function 
such that the restriction
$f|_Y \colon Y \to \R$ is proper and bounded from below.
Then there exists a smooth real valued $G$-invariant function on $N$ 
that is proper and bounded from below
and that coincides with $f$ on some $G$-invariant open neighbourhood 
of $Y$ in~$N$.
\end{Lemma}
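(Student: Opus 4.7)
The plan is to combine $f$ (near $Y$) with an auxiliary everywhere-proper function defined on all of $N$, via a $G$-invariant partition of unity. The bridge that makes this possible is Lemma~\ref{lemma:proper-on-nd}, which lets us replace the closed set $Y$ by a slightly larger $G$-invariant open neighbourhood on whose closure $f$ is still proper and bounded below.

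First I would apply Lemma~\ref{lemma:proper-on-nd} to obtain an open neighbourhood $U_Y$ of $Y$ with $f|_{\ol{U_Y}}$ proper and bounded below, and then pass to the $G$-saturation $V := G \cdot U_Y$. Since $G$ is compact and $f$ is $G$-invariant, the set $G \cdot \ol{U_Y}$ is closed (standard compactness argument), contains $\ol V$, and $f\Inv([a,b]) \cap (G \cdot \ol{U_Y}) = G \cdot (f\Inv([a,b]) \cap \ol{U_Y})$ is compact; hence $f|_{\ol V}$ is proper and bounded below. Next I would produce a smooth $G$-invariant proper bounded-below function $h \colon N \to \R$ on all of $N$ by taking any smooth proper function on the $\sigma$-compact manifold $N$ and averaging over $G$; Lemma~\ref{average-proper} guarantees that averaging preserves properness and lower-boundedness.

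To glue these, I would choose nested $G$-invariant open sets $Y \subset V_0 \subset \ol{V_0} \subset V_1 \subset \ol{V_1} \subset V$ together with a $G$-invariant smooth partition of unity $\{\chi_0, \chi_1\}$ subordinate to $\{V_1,\ N \setminus \ol{V_0}\}$, obtained by averaging an ordinary partition of unity over $G$. The candidate is
\[
\tilde f := \chi_0 f + \chi_1 h,
\]
which is smooth, $G$-invariant, equal to $f$ on the neighbourhood $V_0$ of $Y$, and bounded below by $\min(\inf_{\ol V} f,\ \inf h)$.

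The main obstacle, and the reason Lemma~\ref{lemma:convex} cannot be applied off the shelf, is that $f$ itself is not proper on $N$, so properness of $\tilde f$ must be checked by hand. I would do this by a case analysis on the weights. Outside $V_1$ one has $\chi_0 = 0$, so $\tilde f = h$ and the preimage of $[a,b]$ is a closed subset of the compact set $h\Inv([a,b])$. On $\ol{V_1}$ the inequality $\tilde f \leq b$ on the region $\{\chi_0 \geq \tfrac12\}$ forces $f$ into a sublevel set of the proper function $f|_{\ol V}$, and on $\{\chi_1 \geq \tfrac12\}$ it forces $h$ into a sublevel set of $h$; in either case the points lie in a compact subset of $N$. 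Thus $\tilde f\Inv([a,b])$ is a closed subset of a finite union of compacts, hence compact, completing the proof.
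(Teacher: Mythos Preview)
Your proof is correct and follows the same core strategy as the paper: use Lemma~\ref{lemma:proper-on-nd} to thicken $Y$, then glue $f$ with an auxiliary proper bounded-below function via a partition of unity, and invoke Lemma~\ref{average-proper} for $G$-invariance. The differences are in the order of operations and in the properness check.

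The paper builds a \emph{non-equivariant} function $\psi' = \rho_1 f + \rho_2 g$ first and only averages over $G$ at the very end; you instead $G$-saturate the neighbourhood and average the auxiliary function and the partition of unity up front. Both work; the paper's order saves you the nested-neighbourhood bookkeeping ($V_0 \subset \ol{V_0} \subset V_1 \subset \ol{V_1} \subset V$), since it only needs the single cover $\{U_Y,\ N \ssminus Y\}$.

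More importantly, your claim that Lemma~\ref{lemma:convex} ``cannot be applied off the shelf'' is a misreading. The paper applies Lemma~\ref{lemma:convex} not on $N$ but on the closed subspace $\ol{U_Y}$, where \emph{both} $f$ and $g$ are proper and bounded below; on the complement $N \ssminus U_Y$ the combination equals $g$ outright. In your setup the identical move works: on $\ol{V_1}$ both $f$ and $h$ are proper and bounded below (since $\ol{V_1} \subset \ol V$), so Lemma~\ref{lemma:convex} gives properness of $\tilde f|_{\ol{V_1}}$ immediately, and outside $V_1$ you have $\tilde f = h$. Your hand-done case split on $\{\chi_0 \geq \tfrac12\}$ versus $\{\chi_1 \geq \tfrac12\}$ is correct but unnecessary---it is essentially a reproof of Lemma~\ref{lemma:convex} in this special case.
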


\begin{proof}
By Lemma \ref{lemma:proper-on-nd}, there exists an open neighbourhood $U_Y$ 
of $Y$ such that the restriction of $f$ to $\ol{U_Y}$ is proper 
and bounded from below. 
Let $g \colon N \to \R$ be an arbitrary smooth function that is proper 
and bounded from below (see~\cite[p.53]{GP}).  
Let $\rho_1$ and $\rho_2$ be a smooth partition of unity
subordinate to the open covering \(\{U_Y, N \ssminus Y\}\) of $N$ and
let 
$$ \psi' := \rho_1 f + \rho_2 g \colon N \to \R .$$
The manifold $N$ decomposes as the union of the two closed subsets
$$ N = \ol{U}_Y \cup (N \ssminus U_Y).$$
On $N \ssminus U_Y$, the function $\psi'$ coincides with $g$.
Because $g$ is proper and bounded from below and $N \ssminus U_Y$
is closed in $N$, it follows that $\psi'|_{N \ssminus U_Y}$ is
proper and bounded from below.  On the set $\ol{U}_Y$, the function $\psi'$
is a convex combination of the functions $f$ and $g$, both of which are
proper and bounded from below on $\ol{U}_Y$.
By Lemma~\ref{lemma:convex}, it follows that $\psi'|_{\ol{U}_Y}$
is also proper and bounded from below and hence that $\psi'$ is proper
and bounded from below on all of $N$. 

Finally, since \(\supp \rho_2 \subset N \ssminus Y\)
and $N$ is Hausdorff, there exists some
open neighbourhood $U'$ of $Y$ such that 
\(\supp \rho_2 \cap U' = \emptyset,\) 
hence in particular \(\psi' \equiv f\) on $U'$.

We now define $\psi$ to be the $G$-average of $\psi'$. 
We claim that the function $\psi$ satisfies the conditions of the lemma.
By Lemma~\ref{average-proper}, $\psi$ is still proper 
and bounded from below on $N$.  Moreover, since $\psi' \equiv f$ on
$U'$ and $f$ is $G$-invariant, $\psi$ coincides with $f$ 
on the intersection
$$ \bigcap_{g \in G} g \cdot U'. $$
This intersection is clearly $G$-invariant.
It is a neighbourhood of $Y$
because its complement, 
being the image of the closed set $G \times (N \ssminus U')$
under the proper map $(g,x) \mapsto g \cdot x$ from $G \times N$ to $N$, 
is closed.
This concludes the proof. 
\end{proof}

We are ready to prove the main result of this section:

\begin{proof}[Proof of Proposition~\ref{proposition:completion}]
We first observe that the function $\Phi^v := \left< \Phi , v \right>
 \colon N \to \R$
is $G$-invariant, since both $\Phi$ and $v$ are $G$-equivariant by assumption.
Applying Lemma~\ref{lemma:properFunction} to $\Phi^v$ and $Y$, 
we conclude that there exists a $G$-invariant function $\psi \colon N \to \R$
that is proper, bounded from below, and
coincides with $\Phi^v$ on a $G$-invariant neighbourhood $U$ of $Y$.

Let $Z = \{v^\sharp = 0\}$ be the localizing set 
and let $g_N$ denote a choice of a $G$-invariant Riemannian metric on $N$. 
Consider the $G$-invariant $1$-form $\Theta$ on $N \ssminus Z$ defined by
$$ \Theta(\cdot) 
   = \frac{g_N( \cdot, \bfv)  }{ g_N(\bfv,\bfv)}.$$
Note that $\Theta$ has the property that $\Theta(v^\sharp) \equiv 1$
on $N \ssminus Z$. Such a $1$-form is sometimes called 
a \textbf{Bott projector} (see, e.g., \cite{carrell}), 
following Bott~\cite{Bott1}.
Since \(\psi  - \Phi^v\) is identically zero on an open set
that contains $Y$ and hence $Z$, the product
\((\psi - \Phi^v)\Theta\) defines a $G$-invariant $1$-form
on all of $N$ that vanishes on $Z$. 

We will now explicitly construct a $2$-form $\tomega$ and momentum map $\tPhi$
that satisfy the conditions of the proposition. 
Let
\begin{equation}\label{eq:def-tomega}
\tomega := \omega - d((\psi - \Phi^v)\Theta)
\end{equation}
and 
\begin{equation}\label{eq:def-tPhi}
\tPhi^X := \Phi^X + (\psi - \Phi^v)\Theta(X^{\sharp})
\qquad \text{ for } X \in \g.
\end{equation}

On the neighbourhood $U$ of $Y$ on which $\psi$ coincides with $\Phi^v$,
we have that $\tomega$ and $\tPhi$ coincide with $\omega$ and $\Phi$
respectively. 
Since $\omega$ is closed, the form 
$\tomega$ is also closed. Moreover, 
since both $\omega$ and $(\psi-\Phi^v)\Theta$ are $G$-invariant, it 
follows that $\tomega$ is $G$-invariant. The $G$-equivariance of
$\tPhi$ follows from the $G$-equivariance of $\Phi$, 
the $G$-invariance of $(\psi - \Phi^v)\Theta$,
and the $G$-equivariance of the map $X \mapsto X^\sharp$.
Hamilton's equation~\eqref{eq:Hamiltons} is satisfied by
  $\tomega$ and $\tPhi$, as can be checked as follows: 
\begin{eqnarray*}
 d(\tPhi^X) & = & d(\Phi^X + (\psi - \Phi^v) \Theta(X^{\sharp})) \\
 & = & d(\Phi^X) + d(\imath(X^{\sharp})((\psi - \Phi^v) \Theta)) \\
 & = & \imath(X^{\sharp})\omega - \imath(X^{\sharp})d((\psi - \Phi^v) \Theta) \\
 & = & \imath(X^{\sharp})\tomega,\\
\end{eqnarray*}
where the second to last equality uses the $G$-invariance
of the $1$-form $(\psi - \Phi^v) \Theta$ and the Cartan formula
for the Lie derivative,
$L_{X^\sharp} = d \iota (X^\sharp) + \iota(X^\sharp) d$.
On the set $N \ssminus Z$, by definition of the Bott projector,
\begin{align*}
\tPhi^v & = \Phi^v + (\psi - \Phi^v) \Theta(\bfv) \\
 & = \Phi^v + (\psi - \Phi^v) \\
 & = \psi .
\end{align*}
On the set $U$,
we have \(\psi - \Phi^v \equiv 0 \),
so $\tPhi^v = \Phi^v = \psi$.
Since \(N = U \cup (N \ssminus Z),\) 
we conclude that $\tPhi^v \equiv \psi$ on all of $N$. Since $\psi$ is
proper and bounded below, this implies $\tPhi$ is $v$-polarized on
$N$. 
The result follows. 
\end{proof}

\section{Localization formulas for the Duistermaat-Heckman distribution}
\Label{sec:DH}

The main result of this section, Theorem~\ref{theorem:GLS-cobordism}, 
is a localization formula that
expresses the \DH distribution of a Hamiltonian $G$-manifold
in terms of data near a localizing set
arising from an arbitrary taming map.
We begin by recalling the definition of the \DH distribution: 

\begin{Definition} \Label{DH distribution}
Let $(M,\omega,\Phi)$ be a $2n$-dimensional proper Hamiltonian $G$-manifold
(see\ Definition~\ref{proper nondegenerate}).
The \textbf{Duistermaat-Heckman distribution}, 
$\tDH_{(M,\omega,\Phi)} \colon \Cinf_c(\g^*) \to \R$, 
is the distribution on $\g^*$ that associates to any compactly supported 
test function $\varphi \in \Cinf_c(\g^*)$ on~$\g^*$ the real number
\[
\tDH_{(M,\omega,\Phi)} (\varphi) := 
      \int_M (\varphi \circ \Phi) \frac{\omega^n}{n!}.
\]
(The right hand side is well defined because $\varphi$ is 
compactly supported and $\Phi$ is proper.)
\end{Definition}

In the next lemma we observe that the \DH distribution associated 
to the boundary of an odd-dimensional 
Hamiltonian $G$-manifold must be identically zero. 
The lemma is 
an easy consequence of Stokes' theorem. The argument is the same
as that given in \cite[Section 2.3]{GGK}; 
we briefly recount the proof here for the reader's convenience
(compare also to the arguments in Section~\ref{sec:twistedDH}) and
since this idea is central to our cobordism arguments.

\begin{Lemma} \Label{lemma:Stokes}
Let $(W,\omega,\Phi)$ be a $(2n+1)$-dimensional 
proper Hamiltonian $G$-manifold with boundary $\del W$.
Let $\omega_{\del W}$ and $\Phi_{\del W}$ denote
the pullbacks of $\omega$ and $\Phi$ to the boundary.
Then the Duistermaat-Heckman distribution associated to 
the Hamiltonian $G$-manifold 
$(\del W, \omega_{\del W}, \Phi_{\del W} )$
is identically zero.
\end{Lemma}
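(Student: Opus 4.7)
The plan is to apply Stokes' theorem and then use dimensional considerations to show that the resulting $(2n+1)$-form on $W$ is pointwise zero. Properness of $\Phi$ together with compact support of $\varphi$ guarantees that $(\varphi \circ \Phi)\,\omega^n/n!$ has compact support on $W$, so Stokes gives
\[
\int_{\del W} (\varphi \circ \Phi_{\del W}) \frac{\omega_{\del W}^n}{n!}
= \int_W d\!\left( (\varphi \circ \Phi) \frac{\omega^n}{n!} \right)
= \int_W d(\varphi \circ \Phi) \wedge \frac{\omega^n}{n!},
\]
where for the last equality I use that $\omega$ is closed (so $d(\omega^n) = 0$).

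The crux is then to argue that the integrand $d(\varphi \circ \Phi) \wedge \omega^n$ vanishes pointwise on $W$. Choose a basis $\{e_i\}$ of $\g$ and write $d(\varphi \circ \Phi) = \sum_i (\del_i \varphi \circ \Phi)\, d\Phi^{e_i}$. Then it is enough to show that $d\Phi^X \wedge \omega^n = 0$ on $W$ for every $X \in \g$. Here I invoke Hamilton's equation $d\Phi^X = \iota(X^\sharp)\omega$ together with the graded Leibniz rule for contraction, which gives $\iota(X^\sharp)(\omega^{n+1}) = (n+1)\, \iota(X^\sharp)\omega \wedge \omega^n = (n+1)\, d\Phi^X \wedge \omega^n$. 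Since $\omega^{n+1}$ is a $(2n+2)$-form on the $(2n+1)$-dimensional manifold $W$, it vanishes for degree reasons, so its contraction with $X^\sharp$ also vanishes, yielding $d\Phi^X \wedge \omega^n = 0$ on $W$ as desired.

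Combining the previous steps, the integrand $d(\varphi \circ \Phi) \wedge \omega^n/n!$ is identically zero on $W$, hence the boundary integral is zero. Since $\varphi \in C_c^\infty(\g^*)$ was arbitrary, the distribution $\tDH_{(\del W, \omega_{\del W}, \Phi_{\del W})}$ is identically zero. There is no substantial obstacle in this argument; the only point that requires care is making sure Stokes' theorem is applicable, which follows from the properness of $\Phi$ and the compact support of $\varphi$, and verifying that the dimensional vanishing $\omega^{n+1} = 0$ is compatible with the way Hamilton's equation is applied on a manifold with boundary (it is, since Hamilton's equation restricts to the boundary).
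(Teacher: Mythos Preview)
Your proof is correct and follows essentially the same approach as the paper's: apply Stokes' theorem, use closedness of $\omega$, and then show the integrand vanishes via Hamilton's equation and the dimensional vanishing $\omega^{n+1}=0$ on a $(2n+1)$-manifold. The only cosmetic difference is that the paper packages $d\Phi$ as a $\g^*$-valued one-form and pairs with $X\in\g$, whereas you expand in a basis; also, you make explicit the properness/compact-support justification for Stokes, which the paper leaves implicit.
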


\begin{proof}
Let $\varphi \in \Cinf_c(\g^*)$. 
We have
\begin{equation}\Label{eq:DHpairing}
\begin{split}
\tDH_{(\del W , \omega_{\del W} , \Phi_{\del W} )} (\varphi)
  & = \int_{\partial W} \varphi \circ \Phi \hsm \frac{\omega^n}{n!}
  \quad \text{ by definition of the $\tDH$ distribution }  \\
  & = \int_{W} 
      d \left(\varphi \circ \Phi \hsm \frac{\omega^n}{n!}\right) 
      \quad \text{ by Stokes' theorem } \\
  & = \int_{W} d \left(\varphi \circ \Phi\right) \wedge \frac{\omega^n}{n!}
      \quad \text{ since $\omega$ is closed} \\ 
  & = \int_{W} d\varphi \circ 
      \left(d\Phi \wedge \frac{\omega^n}{n!}\right) 
      \quad \text{ by the chain rule } 
     \\
\end{split}
\end{equation}
where $d\Phi$ is understood to be a $\g^*$-valued 1-form on $W$. 
For every \( X \in \g \), we have
\(\left<d\Phi, X\right> = d \Phi^X = \imath(X^{\sharp})\omega\),
so
\begin{equation}\Label{eq:closed}
\left< d\Phi \wedge \frac{\omega^n}{n!}, X\right>  \ = \ 
 \imath(X^{\sharp})\omega \wedge \frac{\omega^n}{n!} \ = \  
 \imath(X^{\sharp}) \frac{\omega^{n+1}}{(n+1)!} \ = \  0,
\end{equation}
because $\omega^{n+1}$ is a form of degree $2n+2$
on the $(2n+1)$-dimensional manifold $W$.  
Thus, the $\g^*$-valued $(2n+1)$-form \(d\Phi \wedge \frac{\omega^n}{n!}\) 
vanishes when paired with any \(X \in \g \). 
We conclude that \eqref{eq:DHpairing} vanishes 
for any test function $\varphi$, as required.
\end{proof}

In view of Lemma~\ref{lemma:Stokes} above, we recall the following
definition from~\cite[Chap.~2, Def.~2.20]{GGK}:

\begin{Definition} \Label{proper cob}
A \textbf{proper Hamiltonian cobordism}
between Hamiltonian $G$-manifolds
$(M_0,\omega_0,\Phi_0)$ and $(M_1,\omega_1,\Phi_1)$
is a proper Hamiltonian $G$-manifold with boundary
$(W,\tomega,\tPhi)$
and an orientation preserving diffeomorphism
$$ i \colon \ - M_0 \sqcup M_1 \ \to \ \del W $$
(where the negative sign denotes opposite orientation)
such that
$$ i^*(\tomega+\tPhi) \ = \ 
         (\omega_0+\Phi_0) \ \sqcup \ (\omega_1+\Phi_1).$$
\end{Definition}

The next proposition is essential for what follows. The idea of our
main theorem (Theorem~\ref{theorem:GLS-cobordism}) is to construct a
proper Hamiltonian cobordism between a given Hamiltonian $G$-manifold
$(M,\omega,\Phi)$ and another Hamiltonian $G$-manifold 
that is described only in terms of local data near $Z$.

\begin{Proposition} \Label{cob then same DH}
Let $(M_0,\omega_0,\Phi_0)$ and $(M_1,\omega_1,\Phi_1)$ be two even-dimensional
Hamiltonian $G$-manifolds. Suppose that 
there exists a proper Hamiltonian cobordism between them. 
Then 
$(M_0,\omega_0,\Phi_0)$ and $(M_1,\omega_1,\Phi_1)$ are proper
Hamiltonian $G$-manifolds and 
\[
\tDH_{(M_0, \omega_0, \Phi_0)} = \tDH_{(M_1, \omega_1, \Phi_1)}.
\]
\end{Proposition}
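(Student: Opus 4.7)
The plan is to reduce the proposition to an application of Lemma~\ref{lemma:Stokes} applied to the cobording manifold $W$. The only substantive work is (i)~checking that $\Phi_0$ and $\Phi_1$ are proper and (ii)~tracking orientations carefully in the boundary identification.

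First, I would establish properness. By hypothesis, $\tPhi \colon W \to \g^*$ is proper, and $\del W$ is a closed subset of $W$. As noted in the proof of part~(2) of Lemma~\ref{lemma:polarized-proper}, the restriction of a proper map to a closed subset is proper, so $\tPhi|_{\del W}$ is proper. Since $i \colon -M_0 \sqcup M_1 \to \del W$ is a diffeomorphism intertwining the momentum maps, the momentum map on $-M_0 \sqcup M_1$ is proper; since each factor is both open and closed in the disjoint union, $\Phi_0$ and $\Phi_1$ are each proper. In particular, $\tDH_{(M_0,\omega_0,\Phi_0)}$ and $\tDH_{(M_1,\omega_1,\Phi_1)}$ are well-defined distributions on $\g^*$.

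Next I would apply Lemma~\ref{lemma:Stokes} to $(W,\tomega,\tPhi)$, which is a $(2n{+}1)$-dimensional proper Hamiltonian $G$-manifold with boundary (here $2n = \dim M_0 = \dim M_1$). The lemma yields
\[
\tDH_{(\del W,\, \tomega|_{\del W},\, \tPhi|_{\del W})} \;=\; 0.
\]
Using the orientation-preserving diffeomorphism $i$ that intertwines $\tomega+\tPhi$ with $(\omega_0+\Phi_0)\sqcup(\omega_1+\Phi_1)$, the left-hand side decomposes, by additivity of the DH distribution under disjoint union, as
\[
\tDH_{(-M_0,\omega_0,\Phi_0)} \;+\; \tDH_{(M_1,\omega_1,\Phi_1)}.
\]
Reversing orientation negates the integral of $\omega_0^n/n!$ against any test function, so $\tDH_{(-M_0,\omega_0,\Phi_0)} = -\tDH_{(M_0,\omega_0,\Phi_0)}$, and rearranging gives the desired equality.

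The main (very mild) obstacle is just confirming the sign-reversal convention for the DH distribution under orientation reversal and the additivity over the disjoint decomposition of $\del W$; both are immediate from Definition~\ref{DH distribution} once one writes out the integrals. No further analysis is required, since all the analytic content -- Stokes' theorem applied to $(\varphi\circ\Phi)\,\omega^n/n!$ and the vanishing of $d\Phi\wedge\omega^n$ for dimensional reasons -- has already been isolated in Lemma~\ref{lemma:Stokes}.
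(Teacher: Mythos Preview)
Your proposal is correct and follows essentially the same approach as the paper: properness of $\Phi_0$ and $\Phi_1$ follows because the boundary components are closed in the cobording manifold, and the equality of Duistermaat--Heckman distributions is then an immediate consequence of Lemma~\ref{lemma:Stokes}. The paper's own proof is in fact considerably terser than yours, but the content is the same; your additional care with the orientation-reversal sign and the disjoint-union decomposition simply makes explicit what the paper leaves implicit.
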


\begin{proof}
  The first assertion is immediate from the definition of a proper
  Hamiltonian cobordism, since each boundary component is a closed
  subset of the cobording manifold. The second assertion follows from
  Lemma~\ref{lemma:Stokes}. 
\end{proof}

\begin{Remark} \Label{compose}
We can compose cobordisms (see\ \cite{BJ}) 
after choosing ``trivializations" of
tubular neighborhoods of the boundary components.
Thus the existence of a proper Hamiltonian cobordism
is an equivalence relation on Hamiltonian $G$-manifolds
with proper momentum maps.
\end{Remark}

In the previous section, we define the notion of a polarized
  completion of a Hamiltonian $G$-manifold with respect to a closed
subset. This notion is used in the statement of the main theorem of
this section, since the right hand side of the formula~\eqref{main} 
is the \DH distribution $\tDH_{(U_Z, \omega_Z, \Phi_Z)}$ 
of a polarized completion $(U_Z, \omega_Z, \Phi_Z)$ of a
neighbourhood $U_Z$ of the localizing set $Z$
(see\ Def.~\ref{def:DHgerm}).  
Moreover, an implicit assertion in the statement of
Theorem~\ref{theorem:GLS-cobordism} is that this \DH distribution
$\tDH_{(U_Z, \omega_Z, \Phi_Z)}$ is in fact
independent of the choice of polarized completion in a sense that we make
precise below.
The justification of this last assertion will involve two main
ingredients.  First, we
need to place restrictions on the neighbourhood $U_Z$. 
Second, we use the invariance of equivariant cohomology
under equivariant homotopy and the Cartan model for equivariant
cohomology.  Definition~\ref{weak} explains the first of these two
ingredients: 

\begin{Definition} \Label{weak}
Let $N$ be a $G$-manifold and $Z$ a $G$-invariant closed subset.
A \textbf{smooth equivariant weak deformation retraction}
of $N$ to $Z$ is an equivariant smooth homotopy
$p_t \colon N \to N$, for $0 \leq t \leq 1$, 
such that 
\begin{itemize} 
\item $p_0$ is equal to the identity map on $N$,  
\item $p_1$ maps $N$ to $Z$, 
\item for all $t$, the map $p_t$ sends $Z$ to $Z$. 
\end{itemize} 
(A \emph{smooth homotopy} is a homotopy 
such that the map $[0,1] \times N \to N$ defined by $(t,x) \mapsto p_t(x)$
is smooth. The homotopy is \emph{equivariant} if this map 
$[0,1] \times N \to N$ is equivariant, where $G$ acts 
on the product $[0,1] \times N$ by
the given action on $N$ and trivially on the first factor.)
\end{Definition}

\begin{Remark}
  When $t=1$, the map $p_1$ may be viewed as a map from $N$ to $Z$. This
  map $p_1$ is, in particular, a homotopy inverse to the inclusion map 
  $i \colon Z \to N$ since, by assumption, the composition 
  $i \circ p_1 \colon N \to N$ is homotopic through the maps 
  $p_t \colon N \to N$ to the
  identity map on $N$, and the composition $p_1 \circ i \colon Z \to
  Z$ is homotopic through the maps $p_t \circ i \colon Z \to Z$ to the
  identity map on $Z$.  In particular, this implies that the
  restriction map $H^*_G(N) \to H^*_G(Z)$ is an isomorphism.  Here
  $H^*_G(Z)$ is understood to be the singular (not de Rham) Borel-equivariant
  cohomology of $Z$, since we do not assume that $Z$ is a manifold.
\end{Remark}

The next two remarks compare Definition~\ref{weak} to related notions
that appear in the literature. 

\begin{Remark}
 In Definition~\ref{weak}, the map $p_1 \colon N \to Z$ is not
  required to be a retraction, i.e., its
  restriction to $Z$ is not required to be the identity map on $Z$.
  Such a requirement is too stringent for our purposes.  The
  difference between the two notions may be seen in the following
  example. There is no smooth retraction from $\R^2$ to the union of
  the two coordinate axes, since if $p$ is a smooth map that fixes the
  two coordinate axes, then its differential at the origin must be the
  identity, so $p$ must be a diffeomorphism near the origin and it
  cannot be a retraction to the union of the axes. However, it is
  possible to construct a smooth weak deformation retraction in the sense 
  of Definition~\ref{weak}
  from $\R^2$ to the union of the coordinate axes.
\end{Remark}

\begin{Remark}
  Definition~\ref{weak} is the smooth equivariant analogue of a
  ``deformation retraction in the weak sense", in the sense of Hatcher
  in \cite[Chapter 0, Exercise 4]{hatcher}.  We also note that the phrase
  ``weak deformation retraction'' sometimes refers to
   a continuous map $p \colon N \to Z$ 
  that is a homotopy inverse to the inclusion map $Z \hookrightarrow N$,
  but in which the intermediate maps $N \to N$ in the homotopy 
  of $i \circ p$ to the identity are not required to carry $Z$ to itself.  
\end{Remark}

We will work with situations
in which $Z$ is an invariant closed subset of a $G$-manifold $M$
and $N$ is a $G$-invariant neighbourhood of $Z$ in $M$
that admits a smooth equivariant weak deformation retraction to $Z$.

We now briefly recall 
the Cartan model for Borel-equivariant cohomology (with $\R$ coefficients). 
Let $M$ be a $G$-manifold. Then 
an \textbf{equivariant differential form on $M$} 
is a $G$-equivariant polynomial function
from the Lie algebra $\g$ to the space $\Omega^*(M)$
of differential forms on $M$. Identifying polynomial $\R$-valued functions on
$\g$ with the symmetric algebra $S(\g^*)$, we may think of an
equivariant differential form $\alpha$ as an element of $\Omega^*(M)
\otimes S(\g^*)$. The $G$-equivariance condition ensures that $\alpha$
is an element of the $G$-invariants in  the tensor product,
where $G$ acts diagonally on each factor in the standard
fashion. Hence 
\[
\Omega_G^*(M) \cong \left( \Omega^*(M) \otimes_\R S(\g^*) \right)^G.
\]
Since both $\Omega^*(M)$ and $S(\g^*)$ are graded rings, we may equip 
$\Omega^*_G(M)$ with the grading 
\[
\Omega_G^k(M) := \bigoplus\limits_{i+2j=k} 
               \left( \Omega^i(M) \otimes_\R S^j(\g^*) \right)^G.
\]
The \textbf{equivariant differential} 
$d_G \colon \Omega_G^*(M) \to \Omega_G^{*+1}(M)$, 
the equivariant analogue of the ordinary exterior derivative operator 
on $\Omega^*(M)$, is defined by the formula
\[
(d_G\alpha)(X) = d(\alpha(X)) - \iota_{X^\sharp}(\alpha(X))
\]
where $X \in \g$ is a Lie algebra element and $X^\sharp \in \Vect(M)$
denotes its corresponding vector field on $M$.  
The equivariant differential satisfies 
$d_G \circ d_G = 0$, so we may define 
the \textbf{equivariant cohomology (with $\R$ coefficients)} 
$H_G^*(M;\R)$ as the cohomology
of the complex $\left( \Omega_G^*(M) , d_G \right)$. 
This is naturally isomorphic to the Borel-equivariant cohomology of the
$G$-space $M$ with $\R$ coefficients, as defined in terms of the Borel
construction. 

The next lemma, which is one of our main technical tools, states 
conditions under which a polarized completion is unique up to
cobordism.

\begin{Lemma} \Label{lemma:independent}
Let $N$ be an even-dimensional oriented $G$-manifold 
and $Z$ a closed subset of $N$.
Suppose that there exists a smooth equivariant weak deformation retraction 
from $N$ to $Z$.
Let $v \colon N \to \g$ be a bounded taming map
with corresponding localizing set $\{ \bfv = 0 \}$ equal to $Z$. 
Let $\omega_0$ and $\omega_1$ be closed $G$-invariant $2$-forms on $N$ 
and $\Phi_0$ and $\Phi_1$ corresponding momentum maps. 
Suppose that $\Phi_0$ and $\Phi_1$ are $v$-polarized and 
suppose that $\omega_0 + \Phi_0$ and $\omega_1 + \Phi_1$ 
agree on~$Z$ in the sense of Definition~\ref{agree on}.
Let $W = [0,1] \times N$, equipped with the $G$-action 
that is trivial on the first factor 
and is the given action on $N$ on the
second factor. Let $i_0,i_1 \colon N \to W$
be the inclusions at levels $0$ and $1$ respectively.  Then there exists on $W$
a closed $2$-form $\tomega$ and proper momentum map
$\tPhi \colon W \to \g^*$ such that
$i_0^* (\tomega + \tPhi) = \omega_0 + \Phi_0$
and
$i_1^* (\tomega + \tPhi) = \omega_1 + \Phi_1$.
In particular, there exists a proper Hamiltonian cobordism between
$(N, \omega_0, \Phi_0)$ and $(N, \omega_1, \Phi_1)$. 
\end{Lemma}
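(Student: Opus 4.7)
The plan is to use the Cartan model for equivariant de Rham cohomology, together with the weak deformation retraction, to write the difference of the two equivariantly closed forms as $d_G$ of an equivariant $1$-form on $N$, and then to build the cobordism on $W = [0,1]\times N$ by a Moser-type interpolation.

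Let $\eta := (\omega_1 + \Phi_1) - (\omega_0 + \Phi_0)$. Each of $\omega_i + \Phi_i$ is equivariantly closed (this is just Hamilton's equation in the Cartan model), so $d_G \eta = 0$. The main step is to produce a $G$-invariant $1$-form $\gamma_0$ on $N$ with $d_G \gamma_0 = \eta$. Let $P \colon [0,1] \times N \to N$ be the smooth equivariant map corresponding to the given weak deformation retraction, so that $P_0 := P(0, \cdot) = \operatorname{id}_N$ and $P_1 := P(1, \cdot)$ has image in $Z$. The standard fiber-integration operator
\begin{equation*}
h(\eta) := \int_0^1 \iota_{\partial/\partial t}(P^* \eta) \, dt,
\end{equation*}
which yields a $1$-form on $N$ (of equivariant degree $1$, since the polynomial-in-$X$ part of $P^*\eta$ is killed by $\iota_{\partial/\partial t}$), satisfies the equivariant chain homotopy formula $P_1^* \eta - P_0^* \eta = d_G(h(\eta))$. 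Now $P_0^*\eta = \eta$; moreover, for any smooth map $q \colon U \to N$ with $U \subset \R^k$ open, the composition $P_1 \circ q \colon U \to N$ has image contained in $Z$, so by the diffeological hypothesis that $\eta$ agrees with $0$ on $Z$ (Definition~\ref{agree on}), $q^*(P_1^*\eta) = (P_1 \circ q)^* \eta = 0$. Since this holds for every such $q$, $P_1^* \eta = 0$. Hence $\eta = -d_G h(\eta)$; take $\gamma_0 := -h(\eta)$, which we may assume $G$-invariant by averaging over $G$.

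Write $\beta_0 := -\gamma_0$, so that $d\beta_0 = \omega_1 - \omega_0$ and $\iota_{X^\sharp} \beta_0 = \Phi_0^X - \Phi_1^X$ for all $X \in \g$. Let $t$ denote the coordinate on $[0,1]$ and $\pi \colon W \to N$ the projection. Define
\begin{equation*}
\tomega := \pi^* \bigl( (1-t) \omega_0 + t \omega_1 \bigr) + dt \wedge \pi^* \beta_0,
\qquad
\tPhi := \pi^* \bigl( (1-t) \Phi_0 + t \Phi_1 \bigr).
\end{equation*}
Both are $G$-invariant since $G$ acts trivially on the $[0,1]$ factor. A direct calculation using the two identities satisfied by $\beta_0$ shows that $\tomega$ is closed and that Hamilton's equation $d \tPhi^X = \iota_{X^\sharp} \tomega$ holds for every $X \in \g$. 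The inclusions $i_0, i_1 \colon N \hookrightarrow W$ pull $dt$ back to $0$ and $t$ to $0$ and $1$ respectively, so $i_j^* (\tomega + \tPhi) = \omega_j + \Phi_j$ as required.

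It remains to show $\tPhi$ is proper. Extend $v$ to $W$ by pullback via $\pi$; it remains bounded. Then $\tPhi^v(t, x) = (1-t) \Phi_0^v(x) + t \Phi_1^v(x)$. Each $\Phi_j^v$ is proper and bounded below on $N$, so any common lower bound is a lower bound for $\tPhi^v$ on $W$. For properness, if $\tPhi^v(t,x) \leq b$ then at least one of $\Phi_0^v(x), \Phi_1^v(x)$ is $\leq b$, so $x$ lies in the compact set $K_b := (\Phi_0^v)\Inv((-\infty, b]) \cup (\Phi_1^v)\Inv((-\infty, b])$; hence the preimage of any bounded interval under $\tPhi^v$ is a closed subset of the compact set $[0,1] \times K_b$, and is itself compact. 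Thus $\tPhi$ is $v$-polarized on $W$, and by Lemma~\ref{lemma:polarized-proper}(3) it is proper, giving the desired proper Hamiltonian cobordism. The main technical subtlety is the first step: ensuring that the equivariant chain homotopy argument can be applied to a \emph{weak} deformation retraction (which need not fix $Z$ pointwise) and that the diffeological condition of agreement on $Z$ suffices to conclude $P_1^* \eta = 0$, given that $Z$ is only assumed to be a closed subset and not a submanifold.
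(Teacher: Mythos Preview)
Your proof is correct and follows essentially the same approach as the paper: use the weak deformation retraction to show that $(\omega_1+\Phi_1)-(\omega_0+\Phi_0)$ is $d_G$-exact, then interpolate on $[0,1]\times N$ via $\tomega = \pi^*\omega_0 + d(t\,\pi^*\alpha)$ (which unwinds to your formula), and finally check properness using the convex-combination argument together with boundedness of $v$. One harmless sign slip: since $d_G\gamma_0 = \eta$ already yields $d\gamma_0 = \omega_1-\omega_0$ and $\iota_{X^\sharp}\gamma_0 = \Phi_0^X-\Phi_1^X$, you want $\beta_0 := \gamma_0$, not $-\gamma_0$; the rest of the argument is unaffected.
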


Applying Proposition~\ref{cob then same DH}, 
we immediately get the following important consequence of the lemma:

\begin{Corollary} \Label{cor:stokes}
Under the assumptions and notation of Lemma~\ref{lemma:independent},
\[ \tDH_{(N, \omega_0, \Phi_0)} = \tDH_{(N, \omega_1, \Phi_1 )}.\]
\end{Corollary}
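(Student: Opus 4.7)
The plan is to deduce this corollary as an essentially immediate consequence of Lemma~\ref{lemma:independent} combined with Proposition~\ref{cob then same DH}, together with the observation that the two Hamiltonian $G$-manifolds involved are automatically proper in our setup.

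First I would invoke Lemma~\ref{lemma:independent} to obtain the cylinder $W = [0,1] \times N$, equipped with a closed $G$-invariant $2$-form $\tomega$ and a proper $G$-equivariant momentum map $\tPhi \colon W \to \g^*$, such that $i_0^*(\tomega + \tPhi) = \omega_0 + \Phi_0$ and $i_1^*(\tomega + \tPhi) = \omega_1 + \Phi_1$. Orienting $W$ so that its boundary is identified (via the inclusions $i_0, i_1$) with the disjoint union $-N \sqcup N$, this data realizes $(W, \tomega, \tPhi)$ as a proper Hamiltonian cobordism between $(N, \omega_0, \Phi_0)$ and $(N, \omega_1, \Phi_1)$ in the sense of Definition~\ref{proper cob}.

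Next I would note that the two boundary pieces $(N,\omega_0,\Phi_0)$ and $(N,\omega_1,\Phi_1)$ are themselves proper Hamiltonian $G$-manifolds: because they arise as closed subsets of $W$ whose momentum map is $\tPhi$ restricted along $i_0$ and $i_1$, and $\tPhi$ is proper on $W$, properness of $\Phi_0$ and $\Phi_1$ is automatic (this is exactly the first assertion in Proposition~\ref{cob then same DH}). Alternatively, since $v$ is bounded and $\Phi_j$ is $v$-polarized, part (3) of Lemma~\ref{lemma:polarized-proper} gives properness directly. In either case, the Duistermaat-Heckman distributions $\tDH_{(N,\omega_0,\Phi_0)}$ and $\tDH_{(N,\omega_1,\Phi_1)}$ are well defined.

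Finally, I would apply Proposition~\ref{cob then same DH} to the cobordism $(W,\tomega,\tPhi)$ to conclude that
\[
\tDH_{(N, \omega_0, \Phi_0)} = \tDH_{(N, \omega_1, \Phi_1)},
\]
as required. There is no real obstacle at this stage: all the substantive work has been carried out in Lemma~\ref{lemma:independent} (construction of $\tomega$ and $\tPhi$ on the cylinder) and in Proposition~\ref{cob then same DH} (which is itself a Stokes-theorem argument packaged in Lemma~\ref{lemma:Stokes}). The corollary is purely a matter of combining these two inputs.
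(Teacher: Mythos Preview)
Your proposal is correct and matches the paper's approach exactly: the paper simply states that the corollary follows by applying Proposition~\ref{cob then same DH} to the proper Hamiltonian cobordism produced by Lemma~\ref{lemma:independent}. Your additional remarks on properness of $\Phi_0$ and $\Phi_1$ are already subsumed in the first assertion of Proposition~\ref{cob then same DH}, so the argument is complete.
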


\begin{proof}[Proof of Lemma~\ref{lemma:independent}] 
Let $p_t \colon N \to N$, for $0 \leq t \leq 1$,
be an equivariant smooth weak deformation retraction
from $N$ to $Z$ according to Definition~\ref{weak}.
Because the image of $p_1 \colon N \to N$ is contained in $Z$,
and because $\omega_0+\Phi_0$ and $\omega_1+\Phi_1$ agree on $Z$
according to Definition~\ref{agree on}, 
the pullback 
$p_1^* \left( (\omega_1+\Phi_1) - (\omega_0+\Phi_0) \right)$ is zero
on $N$ and 
in particular is equivariantly exact.
Because $p_1$ is smoothly equivariantly homotopic to the identity
map on $N$, and equivariantly homotopic maps induce the same
pullback map on equivariant cohomology,
$(\omega_1+\Phi_1) - (\omega_0+\Phi_0)$ 
is also equivariantly exact on $N$. 
Thus there exists a $G$-invariant $1$-form $\alpha$ on $N$ 
such that 
\begin{equation} \Label{eq primitive}
  d\alpha = \omega_1 - \omega_0
  \quad \text{ and } \quad
  \alpha(X^\sharp) =   \Phi_0^X - \Phi_1^X  \text{ for every } X \in \g.
\end{equation}
Let $t$ denote the first coordinate on the product
$ W = [0,1] \times N $,
and let $\pi \colon W \to N$ denote the projection map to the second
factor. 
Define the closed $G$-invariant $2$-form 
$$ \tomega := \pi^*\omega_0 + d(t\pi^*\alpha) $$
on $W$. This has an associated 
momentum map $\tPhi \colon W \to \g^*$ given by 
$$ \tPhi^X := \pi^*\Phi_0^X - t\pi^*\alpha(X^\sharp).$$
By~\eqref{eq primitive},
the function $\tPhi$ is equal to the convex combination
$ (1-t) \pi^* \Phi_0 + t \pi^* \Phi_1 $.
The functions $\pi^* \Phi_0$ and $\pi^* \Phi_1$
are $\pi^*v$-polarized, because $\Phi_0$ and $\Phi_1$ 
are $v$-polarized and $\pi$ is proper.
By Lemma~\ref{lemma:convex}, 
the function $\tPhi$ is $\pi^*v$-polarized.
Since $v$ is bounded by assumption, from
Lemma~\ref{lemma:polarized-proper} we conclude that
$\tPhi$ is proper.

We have shown that $(W, \tomega, \tPhi)$ is a Hamiltonian $G$-manifold
with boundary whose momentum map $\tPhi$ is proper.
Identifying the boundary $\partial W$ with
$- N \sqcup N$ (where the minus sign denotes reversed orientation),
the restriction of $\tomega$ to $\partial W$ is 
$\omega_0 \sqcup \omega_1$ 
and the restriction of $\tPhi$ is $\Phi_0 \sqcup \Phi_1$.
This completes the proof. 
\end{proof}

In preparation for our arguments in Section~\ref{sec:twistedDH} and
because the generalization requires no substantial additional argument,
we also consider
Hamiltonian $G$-manifolds equipped with equivariant
cohomology classes, namely, quadruples $(M,\omega,\Phi,A)$, 
where $(M,\omega,\Phi)$ is a Hamiltonian $G$-manifold 
and $A \in H_G^*(M)$ is an equivariant cohomology class on $M$.
We define a \textbf{proper Hamiltonian cobordism}
between two such quadruples, $(M_0,\omega_0,\Phi_0,A_0)$
and $(M_1,\omega_1,\Phi_1,A_1)$, to be 
a proper Hamiltonian $G$-manifold with boundary
equipped with an equivariant cohomology class
$(\tM,\tomega,\tPhi,\tA)$,
and a diffeomorphism $i \colon -M_0 \sqcup M_1 \to \del \tM$, 
such that 
\begin{equation}\Label{requirements for quadruples}
i^* \tomega = \omega_0 \sqcup \omega_1, \quad
   i^* \tPhi = \Phi_0 \sqcup \Phi_1, \quad
   i^* \tA = A_0 \sqcup A_1. 
\end{equation}
If there exists a proper Hamiltonian cobordism in the above sense
between $(M_0,\omega_0,\Phi_0,A_0)$ and $(M_1,\omega_1,\Phi_1,A_1)$, 
then in particular there exists a
proper Hamiltonian cobordism between $(M_0,\omega_0,\Phi_0)$
and $(M_1,\omega_1,\Phi_1)$ in the original sense of
Definition~\ref{proper cob}. Thus, by slight abuse of language, 
henceforth we use the term ``proper Hamiltonian cobordism"
to refer to the relation defined above between Hamiltonian $G$-spaces, 
i.e., triples
$(M, \omega, \Phi)$, and also between Hamiltonian $G$-spaces equipped
with an equivariant cohomology class, 
i.e., quadruples $(M, \omega, \Phi, A)$ as above.

\begin{Remark} \Label{compose with A} As in Remark~\ref{compose}, the
  existence of a proper Hamiltonian cobordism is an equivalence
  relation on quadruples $(M,\omega,\Phi,A)$. Indeed, we can compose
  two cobordisms as in Remark~\ref{compose}, and by a Mayer-Vietoris
  argument there exists an equivariant cohomology class on the
  composed cobordism that restricts to the given ones on the two
  pieces. \end{Remark}

The next lemma shows that the \DH distribution of a polarized
completion on a neighbourhood of $Z$ with respect to $Z$ is not only
independent of the choice of polarized completion, as we saw in
Corollary~\ref{cor:stokes}, but is also independent of the choice of the
neighborhood.

\begin{Lemma} \Label{lemma:independent2}
Let $(M,\omega,\Phi)$ be a Hamiltonian $G$-manifold, $Z$ an invariant closed subset,
and $A \in H_G^*(M)$ an equivariant cohomology class.
Let $U_Z^0$ and $U_Z^1$ be invariant open neighbourhoods of $Z$ in $M$, and 
suppose that there exist smooth equivariant weak deformation retractions
from $U_Z^0$ and $U_Z^1$ to $Z$. 
Let $v \colon M \to \g$ be a bounded taming map
such that $\{ v^\sharp = 0 \} \cap U_Z^0 = \{ v^\sharp = 0 \} \cap U_Z^1 = Z$.
For $j=0,1$, let $\omega_j$ be a closed $G$-invariant $2$-form on $U_Z^j$
and let $\Phi_j$ be a corresponding momentum map.
For $j=0,1$, suppose that $\Phi_j$ is $v|_{U_Z^j}$-polarized
and that $\omega_j + \Phi_j$ agrees with $\omega+\Phi$ on $Z$ 
in the sense of Definition~\ref{agree on}.
Then there exists a proper Hamiltonian cobordism
between $(U_Z^0,\omega_0,\Phi_0,A|_{U_Z^0})$ and $(U_Z^1,\omega_1,\Phi_1,A|_{U_Z^1})$.
\end{Lemma}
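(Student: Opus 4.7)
The plan is to reduce to the situation where the two polarized completions literally agree with $(\omega,\Phi)$ on a common $G$-invariant open neighborhood of $Z$, and then to construct a middle cobordism between the two different open sets by interpolating inside $M \times [0,1]$. The main obstacle is building a smooth cobording manifold with precisely the correct boundary $-U_Z^0 \sqcup U_Z^1$; once the underlying manifold is in place, the Hamiltonian data and the equivariant class essentially assemble themselves thanks to the agreement near $Z$.

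For the reduction, I would first apply Proposition~\ref{proposition:completion} to each triple $(U_Z^j, \omega|_{U_Z^j}, \Phi|_{U_Z^j})$ with $Y = Z$; the hypothesis holds since $\Phi|_Z = \Phi_j|_Z$ is $v$-polarized by Lemma~\ref{lemma:polarized-proper}(2) (because $\Phi_j$ is $v|_{U_Z^j}$-polarized and $Z$ is closed in $U_Z^j$). This yields a $v|_{U_Z^j}$-polarized completion $(\omega_j', \Phi_j')$ on $U_Z^j$ that coincides with $(\omega,\Phi)$ on a $G$-invariant open neighborhood $N_j$ of $Z$ in $U_Z^j$. Using the smooth equivariant weak deformation retraction of $U_Z^j$ onto $Z$ from the hypothesis, Lemma~\ref{lemma:independent} then furnishes a proper Hamiltonian cobordism from $(U_Z^j, \omega_j, \Phi_j)$ to $(U_Z^j, \omega_j', \Phi_j')$; carrying $A|_{U_Z^j}$ along via the second-factor projection $[0,1] \times U_Z^j \to U_Z^j$ produces a cobordism of quadruples.

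Set $N := N_0 \cap N_1$, so that $\omega_0' + \Phi_0'$ and $\omega_1' + \Phi_1'$ literally equal $\omega + \Phi$ on $N$, and in particular equal each other. To construct the middle cobordism, pick $G$-invariant smooth nonnegative functions $\rho_0, \rho_1, \rho_N \colon M \to \R$ with $\{\rho_j > 0\} = U_Z^j$ and $\{\rho_N > 0\}$ a $G$-invariant open neighborhood of $Z$ contained in $N$, together with smooth $\tau_0, \tau_1, \tau_N \colon [0,1] \to [0,\infty)$ satisfying $\tau_0(0) > 0$, $\tau_1(1) > 0$, $\tau_0 \equiv 0$ for $t \geq \tfrac{1}{2}-\varepsilon$, $\tau_1 \equiv 0$ for $t \leq \tfrac{1}{2}+\varepsilon$, and $\tau_N > 0$ on $[\tfrac{1}{2}-\varepsilon, \tfrac{1}{2}+\varepsilon]$. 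Define
\[
W := \left\{(x,t) \in M \times [0,1] : \tau_0(t)\rho_0(x) + \tau_1(t)\rho_1(x) + \tau_N(t)\rho_N(x) > 0 \right\}.
\]
As an open subset of the manifold-with-boundary $M \times [0,1]$, $W$ is itself a manifold with boundary, and by construction $\partial W = W \cap (M \times \{0,1\}) = U_Z^0 \times \{0\} \sqcup U_Z^1 \times \{1\}$. On $W \cap \{t \leq \tfrac{1}{2}\}$ set $\tomega + \tPhi := \pi^*(\omega_0' + \Phi_0')$, and on $W \cap \{t \geq \tfrac{1}{2}\}$ set $\tomega + \tPhi := \pi^*(\omega_1' + \Phi_1')$, where $\pi \colon W \to M$ is the projection; these prescriptions agree on the open neighborhood $W \cap \{|\tfrac{1}{2}-t| < \varepsilon\}$ of $W \cap \{t = \tfrac{1}{2}\}$, whose $\pi$-image lies in $\{\rho_N > 0\} \subset N$ where $\omega_0' + \Phi_0' = \omega_1' + \Phi_1'$, so they glue to a well-defined closed $2$-form and momentum map satisfying Hamilton's equation. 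The resulting $\tPhi$ is proper because the preimage of a compact $K \subset \g^*$ lies in $((\Phi_0')^{-1}(K) \times [0,\tfrac{1}{2}]) \cup ((\Phi_1')^{-1}(K) \times [\tfrac{1}{2},1])$, and each $(\Phi_j')^{-1}(K)$ is compact by Lemma~\ref{lemma:polarized-proper}(3) combined with the boundedness of $v$. Setting $\tA := \pi^* A$ produces an equivariant cohomology class restricting correctly to $A|_{U_Z^j}$ on the two boundary components. Composing this middle cobordism with the two from the reduction step, via the transitivity recorded in Remark~\ref{compose with A}, yields the desired cobordism.
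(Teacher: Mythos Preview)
Your reduction step via Lemma~\ref{lemma:independent} is correct, and the manifold $W$ you build does have boundary $-U_Z^0 \sqcup U_Z^1$. The gap is in the properness of $\tPhi$. Your argument shows only that $\tPhi^{-1}(K)$ is contained in a compact subset of $M\times[0,1]$; but $\tPhi^{-1}(K)$ is closed in $W$, not in $M\times[0,1]$, so this does not give compactness. Concretely, the slice $W\cap\{t=\tfrac12\}$ is closed in $W$ and equals $\{\rho_N>0\}\times\{\tfrac12\}$, and there your $\tPhi$ is $\Phi_0'|_{\{\rho_N>0\}}=\Phi|_{\{\rho_N>0\}}$. Since $\{\rho_N>0\}$ is an open but not closed subset of $U_Z^0$, Lemma~\ref{lemma:polarized-proper}(2) says this restriction is \emph{not} proper in general; hence $\tPhi$ is not proper on $W$. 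For instance, take $S^1$ rotating $\C$, $Z=\{0\}$, $U_Z^0=U_Z^1=\{|z|<2\}$, $\{\rho_N>0\}=\{|z|<\tfrac12\}$, and $\Phi_0'=\Phi=|z|^2/2$ on $N$: the preimage of $[0,\tfrac18]$ in the $t=\tfrac12$ slice is the open disc $\{|z|<\tfrac12\}$, not compact. No choice of $\rho_N$ fixes this, because an open neighbourhood of $Z$ properly contained in $U_Z^0$ is never closed there.

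The paper avoids this by applying Proposition~\ref{proposition:completion} directly on the cobording manifold rather than gluing completions from the two ends. It takes $W=(U_Z^0\cup U_Z^1)\times[0,1]$ with the two ``wrong'' boundary pieces removed, pulls back $(\omega,\Phi)$, and polarizes $(W,\pi^*\omega,\pi^*\Phi)$ relative to $Z\times[0,1]$ in one stroke; properness of the resulting $\tPhi$ is then built in. The boundary data become some polarized completions $(\omega_j',\Phi_j')$ on $U_Z^j$, and Lemma~\ref{lemma:independent} connects these to the given $(\omega_j,\Phi_j)$ exactly as in your reduction step. Your $W$ would work too, provided you equip it via Proposition~\ref{proposition:completion} rather than by piecewise gluing.
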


\begin{proof}
The union $U := U_Z^0 \cup U_Z^1$ is a $G$-invariant open neighbourhood
of $Z$ in $M$ whose intersection with the localizing set
$\{ v^\sharp =0 \}$ is equal to $Z$.
Consider the product $U \times [0,1]$ as a $G$-manifold,
with the $G$-action given by the action on the first factor.

Consider the $G$-invariant open subset $W$ of $U \times [0,1]$ 
defined by
$$ W := U \times [0,1] \ \ssminus \  \left( 
     ((U \ssminus U_Z^0) \times \{ 0\}) \, \sqcup \,
     ((U \ssminus U_Z^1) \times \{ 1\})         \right) .$$
Let $\omega_W$ and $\Phi_W$ denote the pullbacks of $\omega$ and $\Phi$
under the map from $W$ to $U$ given by projecting to the first
factor. 
Then $(W,\omega_W,\Phi_W)$ is a Hamiltonian $G$-manifold.

By slight abuse of notation 
we denote also by 
  $v \colon W \to \g$ the pullback
  of $v \colon U \to \g$ by the projection $W \to U$
  to the first factor.  
The localizing set of $(W , v)$ is then $Z \times [0,1]$.
Moreover, the restriction of $\Phi_W$ to $Z \times [0,1]$
is $v$-polarized.

Now let $(W,\tomega,\tPhi)$ be a $v$-polarized completion
of $(W,\omega_W,\Phi_W)$ relative to $Z \times [0,1]$,
which exists by Proposition~\ref{proposition:completion}.
Identifying the boundary $\del W$ of $W$ with $U_Z^0 \sqcup U_Z^1$,
the restriction of $\tomega + \tPhi$ to $\del W$
has the form
$$ (\omega_0' + \Phi_0') \ \sqcup \ (\omega_1' + \Phi_1') ,$$
where $\omega_j'+\Phi_j'$ is a polarized completion
of $(U_Z^j,\omega|_{U_Z^j},\Phi|_{U_Z^j})$ with respect to~$Z$.
Taking $\tA \in H_G^*(W)$ to be the pullback of $A \in H_G^*(M)$ 
through the map $(x,t) \mapsto x$,
we obtain that $(W,\tomega,\tPhi,\tA)$
is a proper Hamiltonian cobordism
between $(U_Z^0,\omega_0',\Phi_0',A|_{U_Z^0})$
and $(U_Z^1,\omega_1',\Phi_1',A|_{U_Z^1})$.

By Lemma~\ref{lemma:independent},
there also exist proper Hamiltonian cobordisms
between $(U_Z^0,\omega_0',\Phi_0',A|_{U_Z^0})$ 
and $(U_Z^0,\omega_0,\Phi_0,A|_{U_Z^0})$
and between $(U_Z^1,\omega_1',\Phi_1',A|_{U_Z^1})$ 
and $(U_Z^1,\omega_1,\Phi_1,A|_{U_Z^1})$.
Composing these cobordisms (see\ Remark~\ref{compose with A})
gives the desired result.
\end{proof}

Applying Proposition~\ref{cob then same DH} yields the following 
consequence.

\begin{Corollary}\Label{corollary:DH indep of nbhd}
Under the assumptions and notation of Lemma~\ref{lemma:independent2}, 
\[
\tDH_{(U_Z^0, \omega_0, \Phi_0)} = \tDH_{(U_Z^1, \omega_1, \Phi_1)}.
\]
\end{Corollary}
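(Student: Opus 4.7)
The plan is to simply combine the two previously established results: Lemma~\ref{lemma:independent2}, which produces a proper Hamiltonian cobordism between the quadruples $(U_Z^0,\omega_0,\Phi_0,A|_{U_Z^0})$ and $(U_Z^1,\omega_1,\Phi_1,A|_{U_Z^1})$ under exactly the hypotheses assumed here, together with Proposition~\ref{cob then same DH}, which asserts that two even-dimensional Hamiltonian $G$-manifolds joined by a proper Hamiltonian cobordism have equal Duistermaat--Heckman distributions.

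Concretely, I would first note that we may apply Lemma~\ref{lemma:independent2} with any equivariant cohomology class $A$ (say, the unit class $1 \in H_G^0(M)$), since the hypotheses of the corollary match those of the lemma verbatim. This produces a proper Hamiltonian manifold with boundary $(W,\tomega,\tPhi)$ together with an orientation-preserving identification of $\del W$ with $-U_Z^0 \sqcup U_Z^1$ under which $\tomega + \tPhi$ pulls back to $(\omega_0 + \Phi_0) \sqcup (\omega_1 + \Phi_1)$; in particular, ignoring the cohomology class data, this is a proper Hamiltonian cobordism in the sense of Definition~\ref{proper cob} between the triples $(U_Z^0,\omega_0,\Phi_0)$ and $(U_Z^1,\omega_1,\Phi_1)$.

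Next, I would invoke Proposition~\ref{cob then same DH}, which applies because both $U_Z^j$ are even-dimensional (being open subsets of the even-dimensional manifold $M$), to conclude that
\[
\tDH_{(U_Z^0,\omega_0,\Phi_0)} = \tDH_{(U_Z^1,\omega_1,\Phi_1)},
\]
which is the desired equality. There is no real obstacle here since all the work has been done in Lemma~\ref{lemma:independent2} (whose proof in turn relied on the polarized completion construction of Proposition~\ref{proposition:completion} and the cobordism construction of Lemma~\ref{lemma:independent}); the corollary is simply the translation of the cobordism statement into the language of Duistermaat--Heckman distributions via Stokes' theorem as recorded in Lemma~\ref{lemma:Stokes}.
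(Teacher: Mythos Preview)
Your proposal is correct and matches the paper's approach exactly: the paper states that the corollary follows immediately by applying Proposition~\ref{cob then same DH} to the proper Hamiltonian cobordism furnished by Lemma~\ref{lemma:independent2}. Your additional remarks (choosing $A=1$, noting the even-dimensionality of the $U_Z^j$) are harmless elaborations of the same one-line argument.
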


Our next step is to find a proper
Hamiltonian cobordism between two Hamiltonian $G$-manifolds
whose \DH distributions yield the left and right hand sides
of our localization formula~\eqref{main} below. 
For this we again invoke the existence of $v$-polarized completions.
The proof uses the same ideas 
as that of Lemma~\ref{lemma:independent2}: we 
start with a trivial cobordism, remove irrelevant pieces of the boundary, 
and take a polarized completion.

\begin{Proposition} \Label{main cobordism}
Let $(M,\omega,\Phi)$ be an even-dimensional Hamiltonian $G$-manifold 
without boundary.  
Let $v \colon M \to \g$ be a bounded taming map,
let $Z = \{ \bfv = 0 \}$ be the corresponding localizing set,
and let $U_Z$ be a $G$-invariant neighbourhood of $Z$ in $M$.
Suppose that $\Phi$ is $v$-polarized on $M$.
Let $A \in H^*_G(M)$ be an equivariant cohomology class on $M$.
Then there exist a $v|_{U_Z}$-polarized completion $(U_Z,\omega_Z,\Phi_Z)$
of $(U_Z,\omega|_{U_Z},\Phi|_{U_Z})$ relative to $Z$ 
(in the sense of Definition~\ref{definition:v-polarized-completion})
and a proper Hamiltonian cobordism between $(M,\omega,\Phi,A)$
and $(U_Z,\omega_Z,\Phi_Z,A|_{U_Z})$. 
\end{Proposition}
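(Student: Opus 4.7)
The plan is to mimic the construction in the proof of Lemma~\ref{lemma:independent2}: start with a trivial cobordism $M \times [0,1]$, excise the ``irrelevant'' piece of the boundary away from $Z$, and then invoke Proposition~\ref{proposition:completion} to produce the desired polarized completion.

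Concretely, I would first form the open subset
\[ W := (M \times [0,1]) \ssminus \bigl( (M \ssminus U_Z) \times \{ 1 \} \bigr) \]
of the product $G$-manifold $M \times [0,1]$ (with $G$ acting trivially on the second factor). Since $M \ssminus U_Z$ is closed in $M$, the set removed is closed in $M \times [0,1]$, so $W$ is a $G$-manifold with boundary and the boundary, with its induced orientation, is $-M \sqcup U_Z$ at levels $t=0$ and $t=1$ respectively. Let $\pi \colon W \to M$ denote projection to the first factor, and pull back all the given data to $W$: one obtains a closed invariant $2$-form $\pi^*\omega$ with momentum map $\pi^*\Phi$, a bounded taming map $\pi^*v$, and an equivariant cohomology class $\pi^*A$. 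The localizing set of $\pi^*v$ is $Z \times [0,1]$, which lies in $W$ because $Z \subseteq U_Z$.

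Next, I would consider the $G$-invariant subset
\[ Y := (M \times \{ 0 \}) \ \cup \ (Z \times [0,1]), \]
which is closed in $W$ and contains the localizing set $Z \times [0,1]$. The key check is that the restriction of $\pi^*\Phi$ to $Y$ is $\pi^*v$-polarized. On $M \times \{0\}$ this is immediate from the hypothesis that $\Phi$ is $v$-polarized on $M$. On $Z \times [0,1]$ the function $\langle \pi^*\Phi, \pi^*v \rangle$ factors through projection to the first factor as $\Phi^v|_Z$, which is proper and bounded below by Lemma~\ref{lemma:polarized-proper}(2) applied to the closed subset $Z \subseteq M$; properness of $\Phi^v|_Z$ then immediately gives properness on $Z \times [0,1]$ since $[0,1]$ is compact. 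Combining these on the union $Y$ is routine: preimages of compact intervals are closed subsets of the union of two compact sets, and the lower bound is the minimum of the two.

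Having verified the hypotheses, I would apply Proposition~\ref{proposition:completion} to $(W, \pi^*\omega, \pi^*\Phi)$ with taming map $\pi^*v$ and closed subset $Y$, producing an invariant closed $2$-form $\tomega$ and momentum map $\tPhi$ on $W$ that agree with $\pi^*\omega + \pi^*\Phi$ on a $G$-invariant neighbourhood of $Y$ and such that $\tPhi$ is $\pi^*v$-polarized on $W$. Because $\pi^*v$ is bounded, Lemma~\ref{lemma:polarized-proper}(3) implies $\tPhi$ is proper, so $(W,\tomega,\tPhi,\pi^*A)$ is a proper Hamiltonian cobordism in the sense of~\eqref{requirements for quadruples}. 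Its restriction to the boundary component $M \times \{ 0 \}$ recovers $(\omega,\Phi,A)$ since $M \times \{0\} \subseteq Y$, and its restriction to $U_Z \times \{1\}$ defines the required triple $(\omega_Z,\Phi_Z,A|_{U_Z})$; the pair $(\omega_Z,\Phi_Z)$ is a $v|_{U_Z}$-polarized completion of $(U_Z,\omega|_{U_Z},\Phi|_{U_Z})$ relative to $Z$ because $Z \times \{1\} \subseteq Y$ (so agreement holds there) and because the restriction of the $\pi^*v$-polarized map $\tPhi$ to the closed boundary component $U_Z \times \{1\}$ is again $v|_{U_Z}$-polarized by Lemma~\ref{lemma:polarized-proper}(2).

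The main point requiring care is the choice of $Y$ and the verification that $\pi^*\Phi|_Y$ is $\pi^*v$-polarized: if $Y$ is taken too small (e.g.\ only $Z \times [0,1]$) one loses the boundary agreement needed to recover $(\omega,\Phi)$ at $t=0$, while if it is taken too large the polarization hypothesis will fail. Once $Y$ is correctly set up, everything else is a direct assembly of Proposition~\ref{proposition:completion} and Lemma~\ref{lemma:polarized-proper}.
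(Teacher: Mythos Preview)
Your proposal is correct and follows essentially the same construction as the paper: form the trivial cobordism $M \times [0,1]$, excise $(M \ssminus U_Z)$ from one end, take $Y$ to be the union of $Z \times [0,1]$ with the full copy of $M$ at the other end, and apply Proposition~\ref{proposition:completion}. The only differences are cosmetic---the paper excises at $t=0$ rather than $t=1$, and streamlines your piecewise polarization check by observing directly that $Y$ is closed in $M \times [0,1]$, on which $\pi^*\Phi$ is $\pi^*v$-polarized since $\pi$ is proper, so Lemma~\ref{lemma:polarized-proper}(2) applies in one stroke.
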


\begin{proof}
  Consider $M \times [0,1]$ as a $G$-manifold, with the $G$-action given
  by the action on the first factor.  Let $\pi: M \times [0,1] \to M$
  denote the projection to the first factor. By a slight abuse of notation 
we denote by
  $v \colon M \times [0,1] \to \g$ the pullback
  of $v \colon M \to \g$ by $\pi$. 
 The localizing set of $(M \times [0,1],v)$ is then $Z \times [0,1]$.  
Consider the open subset $W$ of $M \times [0,1]$ defined by
$$ W := M \times [0,1] \ssminus 
   \left((M \ssminus U_Z) \times \{0\}\right),$$
and consider the closed subset $Y$ of $W$ defined by 
$$  Y := \left(Z \times [0,1] \right) \cup \left( M \times \{1\} \right).$$
Both $W$ and $Y$ are $G$-invariant since $U_Z$ and $Z$ are
$G$-invariant. 
Let $\omega_W$ and $\Phi_W$ denote the pullbacks of $\omega$ and $\Phi$ 
under the projection map $\pi \vert_W \colon W \to M$. 
Then $(W,\omega_W,\Phi_W)$ is a Hamiltonian $G$-manifold. 
Moreover, 
since $\Phi$ is assumed to be $v$-polarized on $M$ and $\pi$ is proper, 
the map $\pi^*\Phi$ is $v$-polarized on $M \times [0,1]$, and 
since $Y$ is closed as a subset of $M \times [0,1]$,
the restriction of $\Phi_W$ to $Y$ is also $v$-polarized.

Now let $(W,\tomega,\tPhi)$ be a choice of $v$-polarized completion 
of $(W,\omega_W,\Phi_W)$ relative to $Y$, which exists 
by Proposition~\ref{proposition:completion}. 
Identifying the boundary $\partial W$ of $W$ with $-U_Z \sqcup M$, 
the restriction of $\tomega+\tPhi$ to $\partial W$ becomes
\begin{equation}\Label{eq:boundary} 
(\omega_Z+\Phi_Z) \ \sqcup \ (\omega+\Phi),
\end{equation}
where $\omega_Z+\Phi_Z$ denotes the restriction of $\tomega+\tPhi$
to the boundary component~$U_Z$.
Because $\tPhi$ is $v$-polarized by construction and this boundary component 
is a closed subset of $W$, the map $\Phi_Z$ is $v$-polarized.
Moreover, by the construction of $\tomega$ and $\tPhi$ and by the definition 
of $Y$, 
we also have that $\omega_Z+\Phi_Z$ agrees with $\omega+\Phi$ on $Z$.
Thus, $(U_Z,\omega_Z,\Phi_Z)$ is a $v$-polarized completion 
of $(U_Z,\omega|_{U_Z},\Phi|_{U_Z})$ relative to $Z$. 
Because $\tPhi$ is $v$-polarized 
and $v$ is bounded, $\tPhi$ is proper. Finally, note that the restriction 
to $\del W$ of the equivariant cohomology class $\tA := \pi|_W^*A$
is $A|_{U_Z} \sqcup A$.
Thus, $(W, \tomega, \tPhi,\tA)$ is a proper Hamiltonian cobordism 
between $(M,\omega,\Phi, A)$ and $(U_Z,\omega_Z,\Phi_Z, A \vert_{U_Z})$.
\end{proof}

For the next theorem, we introduce the following notation. 

\begin{Definition}\Label{def:DHgerm}
Let $(M,\omega,\Phi)$ be a Hamiltonian $G$ manifold, 
let $v \colon M \to \g$ be a bounded taming map,  
and let $Z_i$ be a connected component of the localizing set.
Suppose that there exist arbitrarily small neighbourhoods of $Z_i$
that admit smooth equivariant weak deformation retractions to $Z_i$.
(This means that every neighbourhood of $Z_i$ contains 
a neighbourhood with this property.)
Let $U_i$ be such a neighbourhood, 
and let $(U_i,\omega_i,\Phi_i)$ be a $v|_{U_i}$-polarized completion
of $(U_i,\omega|_{U_i},\Phi|_{U_i})$
relative to $Z_i$.  In this situation we define the notation
\begin{equation} \Label{DH on germ}
 \tDH_{\germ_{Z_i}(M,\omega,\Phi)}^v := 
 \tDH_{(U_i, \omega_i, \Phi_i)}. 
\end{equation}
\end{Definition} 

To justify the notation, we note that the distribution
on the right hand side of~\eqref{DH on germ}
is independent of the choice of $U_i$ and polarized completion
$(\omega_i, \Phi_i)$ 
by Corollary~\ref{corollary:DH indep of nbhd}. Moreover, 
it is determined by the restriction of $\omega$ 
and $\Phi$ to arbitrarily small neighbourhoods of $Z_i$
because $U_i$ can be chosen to be arbitrarily small.

\begin{Remark} \Label{technical hypothesis}
In the above discussion we made the technical hypothesis,
that there exist arbitrarily small neighbourhoods of $Z_i$ 
that admit a smooth equivariant weak deformation retraction
to $Z_i$.  This hypothesis is automatically satisfied when $Z_i$ 
is a manifold by choosing an invariant tubular neighbourhood of $Z_i$.
In many examples,
such as those considered in Section~\ref{sec:BrianchonGram},
the components $Z_i$ of the localizing set $Z$ are indeed smooth and
hence manifolds.
However, there are important situations in which the $Z_i$ are not
necessarily smooth. 
Specifically, the critical set for the norm-square of the momentum map
for a nondegenerate Hamiltonian $G$-manifold can be singular,
as we saw in Remark~\ref{Z not manifold}.
Nevertheless, we expect this critical set
to always satisfy our technical hypothesis. 
A proof would construct such weak deformation
retractions locally using local normal forms for Hamiltonian $G$-manifolds,
and would then patch them in an appropriate sense using a partition of
unity. 
\end{Remark}

We now state and prove the main theorem.

\begin{Theorem}\Label{theorem:GLS-cobordism}
Let $(M,\omega,\Phi)$ be an even-dimensional
Hamiltonian $G$-manifold without boundary. 
Let $v \colon M \to \g$ be a bounded taming map and
let $Z = \{ \bfv = 0 \}$ be the corresponding localizing set.
Suppose that $\Phi$ is $v$-polarized on $M$, hence on $Z$.
Let
\[
Z = \bigsqcup_{i \in \mathcal{I}} Z_i
\]
be the decomposition of the localizing set $Z$ into its connected components.
Suppose that, for every $i \in \mathcal{I}$, 
there exist arbitrarily small neighbourhoods
of $Z_i$ that admit smooth equivariant weak deformation retractions to $Z_i$.
Then
\begin{equation} \Label{main}
 \tDH_{(M,\omega,\Phi)} = \sum_i \tDH_{\germ_{Z_i}(M,\omega,\Phi)}^v.
\end{equation}
\end{Theorem}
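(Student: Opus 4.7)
The plan is to combine Proposition~\ref{main cobordism} with a decomposition of the localizing set into its connected components. Proposition~\ref{main cobordism} already constructs a proper Hamiltonian cobordism between $(M,\omega,\Phi)$ and a $v$-polarized completion $(U_Z,\omega_Z,\Phi_Z)$ of $(U_Z,\omega|_{U_Z},\Phi|_{U_Z})$ relative to $Z$ for any invariant open neighbourhood $U_Z$ of $Z$, and Proposition~\ref{cob then same DH} then identifies the two Duistermaat-Heckman distributions. The task therefore reduces to expressing this single distribution as the prescribed sum over components $Z_i$.

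My approach is to choose $U_Z$ itself to be a disjoint union of neighbourhoods of the individual components. Concretely, for each $i \in \mathcal{I}$ I pick an invariant open neighbourhood $U_i$ of $Z_i$ admitting a smooth equivariant weak deformation retraction to $Z_i$, as guaranteed by hypothesis, chosen small enough that the collection $\{U_i\}_i$ is pairwise disjoint, and set $U_Z := \bigsqcup_i U_i$. Applying Proposition~\ref{main cobordism} with this $U_Z$ produces a $v$-polarized completion $(U_Z,\omega_Z,\Phi_Z)$ whose restriction $(U_i,\omega_i,\Phi_i)$ to each summand is a $v|_{U_i}$-polarized completion of $(U_i,\omega|_{U_i},\Phi|_{U_i})$ relative to $Z_i$. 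From Proposition~\ref{cob then same DH} I obtain
\[
\tDH_{(M,\omega,\Phi)} = \tDH_{(U_Z,\omega_Z,\Phi_Z)},
\]
and since the Duistermaat-Heckman distribution is additive over disjoint unions (and, since $\Phi_Z$ is proper, only finitely many summands contribute a nonzero integral against any compactly supported test function), the right-hand side equals
\[
\sum_i \tDH_{(U_i,\omega_i,\Phi_i)} = \sum_i \tDH_{\germ_{Z_i}(M,\omega,\Phi)}^v,
\]
where the second equality is Definition~\ref{def:DHgerm}. That identification is legitimate because $(U_i,\omega_i,\Phi_i)$ is by construction a $v|_{U_i}$-polarized completion relative to $Z_i$, and Corollary~\ref{corollary:DH indep of nbhd} ensures independence from this choice.

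The main obstacle is the step of choosing the $U_i$ simultaneously pairwise disjoint. The hypothesis supplies arbitrarily small deformation-retractable neighbourhoods of each $Z_i$ separately, but simultaneous disjointness requires that the components of $Z$ be locally separated in $M$. This is automatic when the $Z_i$ are smooth submanifolds admitting invariant tubular neighbourhoods, and more generally follows from Hausdorffness of $M$ combined with the local structure imposed by the existence of a weak deformation retraction of a whole neighbourhood onto each $Z_i$, which prevents other components $Z_j$ from accumulating on $Z_i$. Should separation fail globally, one can reduce to a locally finite subcollection against each compactly supported test function by using that $\Phi^v$ is proper and bounded from below on $Z$, so that any ``level strip'' $\{a \leq \Phi^v \leq b\} \cap Z$ meets only finitely many components and the cobordism construction can be carried out one level at a time.
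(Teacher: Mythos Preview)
Your approach is essentially identical to the paper's proof: choose pairwise disjoint retractable invariant neighbourhoods $U_i$ of the $Z_i$, set $U_Z = \bigsqcup_i U_i$, apply Proposition~\ref{main cobordism} to obtain a polarized completion and a proper Hamiltonian cobordism, invoke Proposition~\ref{cob then same DH}, and then split the Duistermaat--Heckman distribution over the disjoint pieces using Definition~\ref{def:DHgerm}. The paper simply asserts that the $U_i$ can be chosen with disjoint closures and does not pause over the separation issue you raise in your final paragraph, so your core argument matches and your extra discussion is a cautious addendum rather than a departure.
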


\begin{proof}[Proof of Theorem~\ref{theorem:GLS-cobordism}] 
For each $i \in \calI$, choose an invariant neighborhood $U_i$ 
of $Z_i$ that admits an equivariant smooth weak deformation retraction
to $Z_i$.  Moreover, choose these neighbourhoods $U_i$
sufficiently small so that their closures are disjoint.
Let $U_Z$ be the union of the neighbourhoods $U_i$.
Let $(U_Z,\omega_Z,\Phi_Z)$ be a $v|_{U_Z}$-polarized
completion of $(U_Z,\omega|_{U_Z},\Phi|_{U_Z})$ relative to $Z$, as 
obtained from Proposition~\ref{main cobordism}.
In particular, there exists a proper Hamiltonian cobordism
between $(M,\omega,\Phi)$ and $(U_Z,\omega_Z,\Phi_Z)$.
By Proposition~\ref{cob then same DH}, 
\((M, \omega, \Phi)\) and \( (U_Z,\omega_Z,\Phi_Z) \) 
have the same \DH distribution:
\begin{equation} \Label{eq1}
 \tDH_{(M,\omega,\Phi)} = \tDH_{(U_Z,\omega_Z,\Phi_Z)} .
\end{equation}

Let $\omega_i$ and $\Phi_i$ be the restrictions 
of $\omega_Z$ and $\Phi_Z$ to the component $U_i$.
Then the \DH distribution of $(U_Z,\omega_Z,\Phi_Z)$
is the sum of the \DH distributions of $(U_i,\omega_i,\Phi_i)$:
\begin{equation} \Label{eq2}
  \tDH_{(U_Z,\omega_Z,\Phi_Z)} = \sum_i \tDH_{(U_i,\omega_i,\Phi_i)}.
\end{equation}

Because
$(U_i,\omega_i,\Phi_i)$ is a $v|_{U_i}$-polarized completion
of $(U_i,\omega|_{U_i},\Phi|_{U_i})$ relative to $Z_i$, by definition
\begin{equation} \Label{eq3}
 \tDH_{\germ_{Z_i}(M, \omega, \Phi)}^v = \tDH_{(U_i,\omega_i,\Phi_i)}.
\end{equation}

Equation~\eqref{main} follows from~\eqref{eq1}, \eqref{eq2}, and~\eqref{eq3}.
\end{proof}

\begin{Remark} \Label{equivalent maps give same loc}
In Definition~\ref{def:DHgerm},
if $(U_i,\omega_i,\Phi_i)$ is a polarized completion
of $(U_i,\omega|_{U_i},\Phi|_{U_i})$,
and if $v'$ is a taming map that is equivalent to $v$
in the sense of Remark~\ref{rk:equivalent},
then $(U_i,\omega_i,\Phi_i)$ is also a polarized completion
of $(U_i,\omega|_{U_i},\Phi|_{U_i})$ relative to $v'$.
Thus, the \DH distribution
$\tDH^v_{\germ_{Z_i}(M,\omega,\Phi)}$
is independent of the choice of taming map $v$
within an equivalence class in the sense of Remark~\ref{rk:equivalent}.
\end{Remark}

\section{Localization formulas for twisted Duistermaat-Heckman distributions}
\Label{sec:twistedDH}

The main result of this section is a localization theorem for 
\emph{twisted} \DH distributions
(Theorem~\ref{theorem:twisted-GLS-cobordism}) that is analogous to
Theorem~\ref{theorem:GLS-cobordism}.

\subsection*{Definitions and notation.}

For any real vector space $V$, 
there is a natural embedding \(V \to \Vect(V)\) of $V$ into
the space of smooth vector fields on $V$ by \(v \mapsto \tilde{v},\)
where $\tilde{v}$ denotes the constant coefficient vector field
\(\tilde{v}(x) = v \in T_xV \cong V\). 
Furthermore, a smooth vector field $X$ on $V$
may be interpreted as an element of the space $\LDO(V)$ 
of linear differential operators on $V$
via the Lie derivative ${\mathcal L}_X$,
so we also have an embedding $\Vect(V) \to 
  \LDO(V)$ by the association $X \mapsto {\mathcal L}_X$. 
Because partial derivatives commute,
the composition $V \to \Vect(V) \to \LDO(V)$
extends to an algebra embedding $S(V) \to \LDO(V)$, 
denoted $Q \mapsto D_Q$, of the symmetric algebra $S(V)$
into $\LDO(V)$.
Given an element \(Q \in S(V),\) we denote by
\begin{equation}
\begin{split}
  S(V) \otimes_{\R} C^{\infty}(V) & \mapsto C^{\infty}(V) \\
  (Q, \varphi) & \mapsto D_Q \varphi
\end{split}
\end{equation}
the pairing obtained by applying the differential
operator $D_Q$ to the function~$\varphi$. 

Consider now the case \(V = \g^*,\) the dual of the Lie algebra $\g$
of a compact Lie group $G$. Also, let $M$ be a $G$-manifold
and \(\Phi \colon M \to \g^*\) a smooth map. 
Composing with the pullback
\(\Phi^* \colon C^{\infty}(\g^*) \to C^{\infty}(M)\) yields the linear
map 
\begin{equation}\Label{eq:def-DQ}
\begin{split}
S(\g^*) \otimes_\R C^{\infty}(\g^*) & \to C^{\infty}(M) \\
(Q, \varphi) & \mapsto \Phi^*(D_Q \varphi) = D_Q \varphi \circ \Phi.
\end{split}
\end{equation}
Finally, tensoring~\eqref{eq:def-DQ} with the identity map on
$\Omega^*(M)$ and composing with the pointwise multiplication map
\(\Omega^*(M) \otimes_\R C^{\infty}(M) \to \Omega^*(M),\) we obtain
a linear map \(\left(\Omega^*(M) \otimes_\R S(\g^*)\right) \otimes_\R
C^{\infty}(\g^*) \to \Omega^*(M).\)   An element
\(\eta \in \Omega^*(M) \otimes_\R S(\g^*)\) determines via this map a
linear transformation 
\begin{equation}
  \Label{eq:def-DetaPhi}
 D_{\eta,\Phi}  \colon C^{\infty}(\g^*) \to \Omega^*(M), 
\end{equation}
which may be expressed in explicit coordinates as follows. 

We fix for the rest of this discussion a choice of basis $\{X_1, X_2,
\ldots, X_r\}$ for $\g$ and
corresponding dual basis $\{\beta_1, \beta_2, \ldots, \beta_r\}$ of
$\g^*$. For a multi-index \(\mathbf{a} = (a_1, a_2, \ldots, a_r) \in
\Z^r_{\geq 0},\) denote by $\beta^{\mathbf{a}}$ the monomial
\(\beta_1^{a_1} \beta_2^{a_2} \cdots \beta_r^{a_r} \in S(\g^*).\) An
element \(\eta \in \Omega^*(M) \otimes_\R S(\g^*)\) may be
expressed in these coordinates as 
\begin{equation}\Label{eq:eta}
\eta = \sum_{\mathbf{a}} \eta_{\mathbf{a}} \beta^{\mathbf{a}},
\end{equation}
where the coefficients \(\eta_{\mathbf{a}} \in \Omega^*(M)\) are
differential forms. Tracing through the definition of the map
\(D_{\eta,\Phi}\) of~\eqref{eq:def-DetaPhi}, an explicit
computation shows that, for \(\varphi \in C^{\infty}(\g^*)\) 
and $\eta$ as above, 
the function~\eqref{eq:def-DetaPhi} is given by
\begin{equation}
  \Label{eq:DetaPhivarphi}
  D_{\eta,\Phi}(\varphi) = \sum_{\mathbf{a}} \left(
    \Phi^*(D_{\beta^{\mathbf{a}}} \varphi) \right) \eta_{\mathbf{a}}.
\end{equation}

We may now define the twisted \DH distribution on $\g^*$.
We first place the additional assumption that $\Phi$ is \emph{proper}. 
In this case, for any compactly supported function $\varphi$ on $\g^*$
and any $\mathbf{a} \in \Z^r_{\geq 0}$,
the functions $D_{\beta^{\mathbf{a}}}\varphi$ 
and $\Phi^*(D_{\beta^{\mathbf{a}}}\varphi)$ are also
compactly supported (on $\g^*$ and $M$ respectively), and hence
$D_{\eta, \Phi}(\varphi)$ is a compactly supported differential form
on $M$. In particular, its integration over $M$ is well-defined. 
Now let
$(M, \omega, \Phi)$ be a proper Hamiltonian $G$-manifold
(see\ Definition~\ref{proper nondegenerate}).
Let \(\eta \in
(\Omega^*(M) \otimes_\R S(\g^*))^G\) be an equivariantly closed
equivariant form on $M$. We define the \textbf{twisted
  Duistermaat-Heckman distribution with respect to $(M, \omega, \Phi)$
  and $\eta$} on $\g^*$ as follows: 
\begin{equation}
  \Label{eq:def-twistedDH}
  \tDH_{(M,\omega,\Phi)}(\eta) \colon 
   \varphi \mapsto \int_M e^{\omega} \wedge D_{\eta,\Phi}(\varphi).
\end{equation}
When there is no danger of ambiguity, we will occasionally abuse
notation and denote by $\tDH(\eta)$ the distribution
$\tDH_{(M,\omega,\Phi)}(\eta)$.

The explicit formula~\eqref{eq:def-twistedDH} 
implies that $\varphi \mapsto \tDH(\eta)(\varphi)$ 
is linear and continuous
as a map from the space $\Cinf_c(\g^*)$
of compactly supported functions, with its $\Cinf$ topology, to $\R$.  
Hence $\tDH(\eta)$ is a distribution. Moreover, when $\eta \equiv 1$,
the twisted
Duistermaat-Heckman distribution reduces to the classical
(``untwisted'') Duistermaat-Heckman distribution discussed in the
previous section.

\begin{Remark}
When $M$ is compact, we can integrate $\exp(\omega+i\Phi) \wedge \eta$ 
over $M$ to obtain an analytic function on $\g$.
The twisted \DH distribution $\tDH_{(M,\omega,\Phi)}(\eta)$
is essentially the Fourier transform of this function.
See \cite[section 3.1]{Woo05}.
\end{Remark}

We now add some extra data to that of a Hamiltonian $G$-manifold:
we call \textbf{Hamiltonian $G$-manifold equipped with a closed 
equivariant form} a quadruple $(M, \omega, \Phi, \eta)$ 
where $(M, \omega, \Phi)$ is a Hamiltonian $G$-manifold and
\(\eta \in \Omega^*_G(M)\) is an equivariant differential form on $M$
that is equivariantly closed. 
In analogy with the definitions in Section~\ref{sec:DH} (and analogous
slight abuse of language), 
a \textbf{proper Hamiltonian cobordism} between two such quadruples
\((M_0, \omega_0, \Phi_0, \eta_0)\) and \((M_1, \omega_1, \Phi_1, \eta_1)\)
is a quadruple $(\tM, \tomega, \tPhi, \teta)$
and a diffeomorphism \(i \colon -M_0 \sqcup M_1 \to \del \tM\) 
 such that
 \begin{equation}\Label{eq:def proper Hamiltonian cobordism with form}
 i^* \tomega = \omega_0 \sqcup \omega_1, \quad i^*\tPhi = \Phi_0 \sqcup
 \Phi_1, \quad i^* \teta = \eta_0 \sqcup \eta_1, 
 \end{equation}
 and such that \((\tM, \tomega, \tPhi, \teta)\) is itself a proper Hamiltonian
 $G$-manifold (with boundary) equipped with a closed equivariant
 form. Note that if such a cobordism exists, 
then $\Phi_0$ and $\Phi_1$ are necessarily proper.

\begin{Remark}
As in Remark~\ref{compose with A},
after possibly modifying the equivariant differential forms
on tubular neighbourhoods of the boundary components,
we can compose such cobordisms.  Thus, being cobordant
in the sense defined above is an equivalence relation on 
proper Hamiltonian $G$-manifolds equipped with closed equivariant
forms.
\end{Remark}

We begin with the following ``twisted analogue" of
Lemma~\ref{lemma:Stokes} and Proposition~\ref{cob then same DH}:

\begin{Lemma} \Label{cobordism twisted DH}
Let $(M_0,\omega_0,\Phi_0,\eta_0)$
and $(M_1,\omega_1,\Phi_1,\eta_1)$
be proper Hamiltonian $G$-manifolds equipped with closed equivariant
forms.
Suppose that there exists a proper Hamiltonian cobordism
between $(M_0,\omega_0,\Phi_0,\eta_0)$
and $(M_1,\omega_1,\Phi_1,\eta_1)$. Then 
$$ \tDH_{(M_0,\omega_0,\Phi_0)} (\eta_0) 
   = \tDH_{(M_1,\omega_1,\Phi_1)} (\eta_1) .$$
\end{Lemma}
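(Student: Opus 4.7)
The plan is to extend the Stokes-theorem argument of Lemma~\ref{lemma:Stokes} and Proposition~\ref{cob then same DH} to incorporate the equivariant form $\teta$. Let $(\tM,\tomega,\tPhi,\teta)$ be a proper Hamiltonian cobordism between the two quadruples, with $\dim M_j = 2n$ and so $\dim \tM = 2n+1$. Given a test function $\varphi \in \Cinf_c(\g^*)$, properness of $\tPhi$ ensures that the form $\beta(\varphi) := e^{\tomega} \wedge D_{\teta,\tPhi}(\varphi)$ is compactly supported on $\tM$, and its pullback to each boundary component coincides with the corresponding twisted DH integrand because pullback commutes with the exponential and with the operator $D_{\cdot,\cdot}$. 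Stokes' theorem together with the identification $\del \tM = -M_0 \sqcup M_1$ then expresses the difference $\tDH_{(M_1,\omega_1,\Phi_1)}(\eta_1)(\varphi) - \tDH_{(M_0,\omega_0,\Phi_0)}(\eta_0)(\varphi)$ as $\int_{\tM} d\beta(\varphi)$, so the task is reduced to showing $\int_{\tM} d\beta(\varphi) = 0$.

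Since $\tomega$ is closed and of even total degree, $d\beta(\varphi) = e^{\tomega} \wedge d D_{\teta,\tPhi}(\varphi)$. Expanding $\teta = \sum_{\mathbf{a}} \teta_{\mathbf{a}} \beta^{\mathbf{a}}$ in the chosen basis, equivariant closedness $d_G \teta = 0$ translates, on collecting coefficients of monomials, into the identities $d\teta_{\mathbf{b}} = \sum_{j} \iota_{X_j^\sharp} \teta_{\mathbf{b}-e_j}$ (with the convention that $\teta_{\mathbf{b}-e_j}$ vanishes when $b_j = 0$). Meanwhile Hamilton's equation~\eqref{eq:Hamiltons} gives $d(\tPhi^{X_j}) = \iota_{X_j^\sharp}\tomega$, so the chain rule yields $d(\tPhi^*(D_{\beta^{\mathbf{a}}}\varphi)) = \sum_j \tPhi^*(D_{\beta^{\mathbf{a}+e_j}}\varphi) \cdot \iota_{X_j^\sharp}\tomega$. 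Substituting both into~\eqref{eq:DetaPhivarphi}, performing a multi-index shift so that the two contributions can be combined, and using the elementary identity $\iota_{X_j^\sharp}e^{\tomega} = (\iota_{X_j^\sharp}\tomega)\wedge e^{\tomega}$ together with the graded Leibniz rule, one arrives at
\begin{equation*}
e^{\tomega} \wedge d D_{\teta,\tPhi}(\varphi) \;=\; \sum_{j}\sum_{\mathbf{b}} \tPhi^*(D_{\beta^{\mathbf{b}+e_j}}\varphi)\cdot \iota_{X_j^\sharp}\bigl(e^{\tomega}\wedge\teta_{\mathbf{b}}\bigr).
\end{equation*}

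The conclusion now follows by the same degree-count used in the proof of Lemma~\ref{lemma:Stokes}: the top degree on $\tM$ is $2n+1$, so for $\iota_{X_j^\sharp}(\cdot)$ to produce a $(2n+1)$-form its argument would have to carry a $(2n+2)$-degree component, which vanishes on $\tM$ for dimensional reasons. Hence the top-degree component of the integrand vanishes pointwise, so $\int_{\tM} d\beta(\varphi)=0$, and the two twisted Duistermaat--Heckman distributions agree on every test function $\varphi$. The main obstacle will be the middle paragraph: the multi-index bookkeeping that packages $d_G \teta = 0$ and Hamilton's equation into a single identity whose right-hand side is a sum of contractions $\iota_{X_j^\sharp}(e^{\tomega}\wedge\teta_{\mathbf{b}})$. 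Once this identity is in hand the degree-count is a straightforward adaptation of the argument already given in the untwisted case.
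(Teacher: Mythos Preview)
Your proposal is correct and follows essentially the same route as the paper's own proof: reduce via Stokes' theorem to showing that $\int_{\tM} e^{\tomega}\wedge dD_{\teta,\tPhi}(\varphi)=0$, then use equivariant closedness of $\teta$ together with Hamilton's equation to rewrite the integrand as a sum of contractions $\iota_{X_j^\sharp}(e^{\tomega}\wedge\teta_{\mathbf{b}})$, whose top-degree part vanishes for dimensional reasons. The only cosmetic difference is that you phrase equivariant closedness as $d\teta_{\mathbf{b}}=\sum_j\iota_{X_j^\sharp}\teta_{\mathbf{b}-e_j}$ and shift multi-indices, whereas the paper keeps the identity at the level of the full form $\sum_{\mathbf{a}} d\teta_{\mathbf{a}}\beta^{\mathbf{a}}=\sum_{\mathbf{a}}\sum_i\iota(X_i^\sharp)\teta_{\mathbf{a}}\,\beta_i\beta^{\mathbf{a}}$ before substituting; the resulting displayed identities are the same up to this reindexing.
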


\begin{proof}
Let $(\tM,\tomega,\tPhi,\teta)$, with diffeomorphism
$i \colon -M_0 \sqcup M_1 \to \del \tM$,
be a proper Hamiltonian cobordism 
between $(M_0,\omega_0,\Phi_0,\eta_0)$ and $(M_1,\omega_1,\Phi_1,\eta_1)$.
To show that the twisted \DH distributions are equal, it suffices to
show that for any \(\varphi \in C^{\infty}_c(\g^*)\) we have
\begin{equation}
  \tDH_{(M_0,\omega_0,\Phi_0)}(\eta_0) (\varphi)
= \tDH_{(M_1,\omega_1,\Phi_1)}(\eta_1) (\varphi).
\end{equation}
We compute the difference as
\begin{align*}
 \tDH_{(M_1,\omega_1,\Phi_1)}(\eta_1) (\varphi)
 - \tDH_{(M_0,\omega_0,\Phi_0)}(\eta_0) (\varphi) 
 & = \int_{M_1} e^{\omega_1} \wedge D_{\eta_1, \Phi_1}(\varphi)
 - \int_{M_0} e^{\omega_0} \wedge D_{\eta_0, \Phi_0}(\varphi) \\ 
 & = \int_{\del \tM} e^{\tomega} \wedge D_{\teta, \tPhi}(\varphi)
\qquad \text{ by~\eqref{eq:def proper Hamiltonian cobordism with form} } \\
 & = \int_{\tM} d \left( e^{\tomega} \wedge D_{\teta, \tPhi}(\varphi) \right)
\qquad \textup{ by Stokes' theorem}.
\end{align*}
Thus it suffices to prove that 
\begin{equation} \Label{should be zero}
 \int_{\tM} d \left(e^\tomega \wedge D_{\teta,\tPhi}(\varphi)\right) = 0. 
\end{equation}
We first write
\[
\teta = \sum_{\mathbf{a}} \teta_{\mathbf{a}} \beta^{\mathbf{a}}
\]
with respect to the basis $\{\beta_i\}$ of $\g^*$ fixed above,
where the $\teta_{\mathbf{a}}$ are differential forms on $M$
(of mixed degree).
Then 
\(D_{\teta, \tPhi}(\varphi) = \sum_{\mathbf{a}} \left(
  \tPhi^*(D_{\beta^{\mathbf{a}}} \varphi) \right)
\teta_{\mathbf{a}}\) by~\eqref{eq:DetaPhivarphi}.
Since $\tomega$ is closed, \(d(e^{\tomega}) = 0\), so
\[
d \left( e^\tomega \wedge D_{\teta,\tPhi}(\varphi) \right) = e^\tomega
\wedge d(D_{\teta, \tPhi}(\varphi)).
\]
We compute 
\begin{equation}
\Label{eq:dDtetatPhivarphi}
\begin{split}
  d\left( D_{\teta,\tPhi}(\varphi) \right) & = d \left(
    \sum_{\mathbf{a}} \left(
      \tPhi^*(D_{\beta^{\mathbf{a}}}\varphi)\right) \teta_{\mathbf{a}}\right) \\
 & = \sum_{\mathbf{a}} d \left( \tPhi^*(D_{\beta^{\mathbf{a}}}\varphi)
 \right) \wedge \teta_{\mathbf{a}} + \sum_{\mathbf{a}} \left(
   \tPhi^*(D_{\beta^{\mathbf{a}}}\varphi) \right) d\teta_{\mathbf{a}}
 \\
 & = \sum_{\mathbf{a}} \tPhi^* \left( d
   (D_{\beta^{\mathbf{a}}}\varphi) \right) \wedge \teta_{\mathbf{a}} +
 \sum_{\mathbf{a}} \left(
   \tPhi^*(D_{\beta^{\mathbf{a}}}\varphi) \right) d\teta_{\mathbf{a}}
 \\
 & = \sum_{\mathbf{a}} \sum_i \tPhi^* \left( D_{\beta_i}
   (D_{\beta^{\mathbf{a}}}\varphi) \right) d \langle \tPhi, X_i \rangle
 \wedge \teta_{\mathbf{a}} + \sum_{\mathbf{a}} \left( \tPhi^*
   (D_{\beta^{\mathbf{a}}}\varphi) \right) d\teta_{\mathbf{a}} \\
 & = \sum_{\mathbf{a}} \sum_i \tPhi^*\left( D_{\beta_i}
   (D_{\beta^{\mathbf{a}}}\varphi) \right)  \iota(X_i^\sharp) \tomega
 \wedge \teta_{\mathbf{a}} +  \sum_{\mathbf{a}} \left( \tPhi^*
   (D_{\beta^{\mathbf{a}}}\varphi) \right) d\teta_{\mathbf{a}}. 
\end{split}
\end{equation}

Now recall that $\teta$ is equivariantly closed, so for any \(X \in
\g,\) we have 
\[
d \left( \teta(X) \right) - \iota(X^\sharp) \teta(X) = 0.
\]
This implies 
\[
\sum_{\mathbf{a}} d \teta_{\mathbf{a}} \beta^{\mathbf{a}} = 
\sum_{\mathbf{a}} \sum_i \iota(X_i^\sharp) \teta_{\mathbf{a}} \beta_i
\beta^{\mathbf{a}},
\]
where $\{X_i\}$ is the basis for $\g$ as fixed above. Thus for any
\(\varphi \in C^{\infty}_c(\g^*)\) we have 
\[
\sum_{\mathbf{a}} \left( \tPhi^*(D_{\beta^{\mathbf{a}}}\varphi) \right)
d\teta_{\mathbf{a}} =  \sum_{\mathbf{a}} \sum_i \left( \tPhi^* (
  D_{\beta_i \beta^{\mathbf{a}}} \varphi) \right) \iota(X_i^\sharp)
\teta_{\mathbf{a}}.
\]
Noticing that \(D_{\beta_i \beta^{\mathbf{a}}} \varphi = D_{\beta_i} D_{\beta^{\mathbf{a}}} \varphi\) by definition of $D_Q$, we have 
\[
\sum_{\mathbf{a}} \left( \tPhi^*(D_{\beta^{\mathbf{a}}}\varphi) \right)
d\teta_{\mathbf{a}} = \sum_{\mathbf{a}} \sum_i \left( \tPhi^* (
  D_{\beta_i} D_{\beta^{\mathbf{a}}} \varphi) \right) \iota(X_i^\sharp)
\teta_{\mathbf{a}}.
\]
Substituting into the last expression in~\eqref{eq:dDtetatPhivarphi}, we obtain 
\[
d \left( D_{\teta, \tPhi}(\varphi) \right) = 
\sum_i \sum_{\mathbf{a}} \tPhi^*\left( D_{\beta_i}
  D_{\beta^{\mathbf{a}}} \varphi \right) \left( \iota(X_i^\sharp)
  \tomega \wedge \teta_{\mathbf{a}} + \iota(X_i^\sharp)
  \teta_{\mathbf{a}} \right).
\]
Hence 
\begin{equation}
  \Label{eq:etomega-dD}
  e^{\tomega} \wedge d\left( D_{\teta, \tPhi}(\varphi) \right) = 
\sum_i \sum_{\mathbf{a}} \tPhi^*\left( D_{\beta_i}
  D_{\beta^{\mathbf{a}}} \varphi \right) \left( e^\tomega \wedge \iota(X_i^\sharp)
  \tomega \wedge \teta_{\mathbf{a}} + e^\tomega \wedge \iota(X_i^\sharp)
  \teta_{\mathbf{a}} \right).
\end{equation}
From the definition of the exponential 
\[
e^\tomega = 1 + \tomega + \frac{\tomega^2}{2!} + \frac{\tomega^3}{3!}
+ \cdots 
\]
it can be seen
that \(\iota(X_i^\sharp)\tomega \wedge e^\tomega =
\iota(X_i^\sharp)e^\tomega = e^\tomega \wedge \iota(X_i^\sharp)
\tomega.\) Hence we may further simplify~\eqref{eq:etomega-dD} as
\begin{equation*}
\begin{split}
   e^{\tomega} \wedge d\left( D_{\teta, \tPhi}(\varphi) \right) & = 
  \sum_i \sum_{\mathbf{a}} \tPhi^*\left( D_{\beta_i}
  D_{\beta^{\mathbf{a}}} \varphi \right) \left( \left( \iota(X_i^\sharp)
  e^\tomega \right) \wedge \teta_{\mathbf{a}} + e^\tomega \wedge
\left( \iota(X_i^\sharp)
  \teta_{\mathbf{a}}\right) \right) \\
 & = \sum_i \sum_{\mathbf{a}} \tPhi^*\left( D_{\beta_i}
  D_{\beta^{\mathbf{a}}} \varphi \right) \iota(X_i^\sharp) \left(
  e^\tomega \wedge \teta_{\mathbf{a}} \right) \\
 & = \sum_i \sum_{\mathbf{a}} \iota(X_i^\sharp) \left( \tPhi^* \left(
     D_{\beta_i} D_{\beta^{\mathbf{a}}} \varphi \right) \left(
     e^\tomega \wedge \teta_{\mathbf{a}} \right) \right).
\end{split}
\end{equation*}

The integral over $M$ of the right hand side of this last equation is
$0$, because the expression is the contraction by a vector field of a
differential form, and hence its top degree part (which is the only
part which contributes to the integral) must be $0$. 
Hence~\eqref{should be zero} vanishes, as desired. 
\end{proof}

The purpose of the next lemma is to show that the twisted \DH
distribution is independent of the choice of a closed
equivariant form $\eta$ within an equivariant cohomology class.

\begin{Lemma} \Label{cohomologous are cobordant}
Let $(M,\omega,\Phi)$ be a proper Hamiltonian $G$-manifold.
Suppose that $\eta_0$ and $\eta_1$ are closed equivariant differential forms
on $M$ such that \([\eta_0] = [\eta_1] \in H^*_G(M;\R)\).
Then there exists a proper Hamiltonian cobordism 
between $(M,\omega,\Phi,\eta_0)$ and $(M,\omega,\Phi,\eta_1)$. 
\end{Lemma}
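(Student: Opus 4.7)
The plan is to build the cobording manifold as the cylinder $W = M \times [0,1]$, with $\tomega$ and $\tPhi$ obtained by pulling back $\omega$ and $\Phi$ along the projection $\pi \colon W \to M$, and to construct $\teta$ out of an equivariant primitive for $\eta_1 - \eta_0$.

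First, give $W$ the $G$-action that is trivial on $[0,1]$ and the given action on $M$. Set $\tomega := \pi^*\omega$ and $\tPhi := \pi^*\Phi$. Then $(W,\tomega,\tPhi)$ is a Hamiltonian $G$-manifold with boundary, Hamilton's equation being preserved under pullback. Because $\Phi$ is proper and $[0,1]$ is compact, $\pi$ is proper and hence $\tPhi = \pi^*\Phi$ is proper. Orient $W$ as a product so that $\del W$ gets identified with $-M \sqcup M$, with the minus sign on the $t=0$ boundary.

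Second, construct $\teta$. Because $[\eta_0] = [\eta_1]$ in $H_G^*(M;\R)$, pick $\beta \in \Omega_G^*(M)$ with $d_G \beta = \eta_1 - \eta_0$. Let $t$ denote the coordinate on $[0,1]$ and define
\[
\teta := \pi^*\eta_0 + d_G(t\, \pi^*\beta) \in \Omega_G^*(W).
\]
Since $\pi^*\eta_0$ is equivariantly closed and the second term is equivariantly exact, $\teta$ is equivariantly closed. Using that the $G$-action on the $[0,1]$ factor is trivial, so that every $X^\sharp$ on $W$ is the $\pi$-pullback of $X^\sharp$ on $M$, one computes in the Cartan model
\[
d_G(t\, \pi^*\beta) = dt \wedge \pi^*\beta + t\, \pi^*(d_G \beta) = dt \wedge \pi^*\beta + t\, \pi^*(\eta_1 - \eta_0),
\]
so that
\[
\teta = (1-t)\, \pi^*\eta_0 + t\, \pi^*\eta_1 + dt \wedge \pi^*\beta.
\]
Pulling back to $M \times \{0\}$ or $M \times \{1\}$ annihilates the $dt$ term, yielding $\eta_0$ and $\eta_1$ respectively.

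Third, verify the cobordism conditions: $(W,\tomega,\tPhi,\teta)$ is a proper Hamiltonian $G$-manifold with boundary equipped with an equivariantly closed form, and under the identification $\del W \cong -M \sqcup M$, the triple $(\tomega,\tPhi,\teta)$ restricts to $(\omega,\Phi,\eta_0) \sqcup (\omega,\Phi,\eta_1)$. This is precisely a proper Hamiltonian cobordism between $(M,\omega,\Phi,\eta_0)$ and $(M,\omega,\Phi,\eta_1)$. There is no real obstacle here; the only nontrivial point is the Cartan-model calculation above, which relies crucially on the action being trivial on the $[0,1]$ factor so that $\iota(X^\sharp)\,dt = 0$ and $\iota(X^\sharp)(\pi^*\beta) = \pi^*(\iota(X^\sharp)\beta)$.
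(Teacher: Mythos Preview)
Your proof is correct and follows essentially the same approach as the paper: build the cylinder $[0,1]\times M$ with pulled-back $\omega$ and $\Phi$, and set $\teta = \pi^*\eta_0 + d_G(t\,\pi^*\gamma)$ for an equivariant primitive $\gamma$ of $\eta_1-\eta_0$. You in fact give slightly more detail than the paper in spelling out the Cartan-model computation of $d_G(t\,\pi^*\beta)$ and explaining why the $dt$ term vanishes on the boundary.
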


\begin{proof}
Since $[\eta_0] = [\eta_1]$ there exists 
an equivariant differential form $\gamma$ on $M$ 
such that $\eta_1 - \eta_0 = d_G \gamma$.
Equip $\tM := [0,1] \times M$
with the $2$-form $\tomega = \pi^* \omega$
and momentum map $\tPhi = \pi^* \Phi$,
where $\pi \colon \tM \to M$ is the projection map to the
second factor.  
Consider $\tM$ to be a $G$-manifold equipped with the given action on
$M$ and the trivial action on $[0,1]$. This makes $(\tM, \tomega,
\tPhi)$ a Hamiltonian $G$-space, and $\pi$ an equivariant map. 
Moreover, since $\Phi \colon M \to \g^*$ is proper,
$\tPhi \colon \tM \to \g^*$ is also proper.

Let 
$t$ denote the coordinate on the interval $[0,1]$. 
Define on $\tM$ the equivariant different form
$$ \tilde{\eta} = \pi^* \eta_0 + d_G (t\pi^*\gamma). $$
Since $d_G \eta_0=0$ by assumption, 
$\teta$ is also equivariantly closed. 
Define $i_0 \colon \tM \times M$ and $i_1 \colon \tM \times M$
by $i_0(m) = (0,m)$ and $i_1(m) = (1,m)$.
Then $i_0^* \tomega = i_1^* \tomega = \omega$,
$i_0^* \tPhi = i_1^* \tPhi = \Phi$,
$i_0^* \teta = \eta_0$, and $i_1^* \teta = \eta_0 + d_G \gamma =
\eta_1$. 
This shows that $(\tM,\tomega,\tPhi,\teta)$
is a proper Hamiltonian cobordism 
between $(M,\omega,\Phi,\eta_0)$ and $(M,\omega,\Phi,\eta_1)$, 
as was to be shown. 
\end{proof}

By Lemmas~\ref{cobordism twisted DH}
and~\ref{cohomologous are cobordant} we have just shown that 
the following notion is well-defined:

\begin{Definition} \Label{twisted DH with coh}
Let $(M,\omega,\Phi)$ be a proper Hamiltonian $G$-manifold, and 
let $A$ be an equivariant cohomology class in $H^*_G(M)$.
Let $\eta$ be an equivariant differential form on $M$
representing $A$, i.e., \(A = [\eta].\)  
We define the \textbf{twisted \DH distribution
with respect to $\mathbf{(M,\omega,\Phi)}$ 
and the equivariant cohomology class $A$} as 
$$ \tDH_{(M,\omega,\Phi)}(A) := \tDH_{(M,\omega,\Phi)}(\eta).$$
\end{Definition}

In the previous section we introduced
the notion of a proper Hamiltonian cobordism
between Hamiltonian $G$-manifolds equipped with equivariant cohomology classes.
Using this notion we have the following analogue
of Lemma~\ref{cobordism twisted DH}:

\begin{Lemma} \Label{coh cob twisted DH}
Let $(M_0,\omega_0,\Phi_0)$ and $(M_1,\omega_1,\Phi_1)$
be proper Hamiltonian $G$-manifolds
with equivariant cohomology classes $A_0 \in H^*_G(M_0)$
and $A_1 \in H^*_G(M_1)$.  Suppose that there exists
a proper Hamiltonian cobordism between the quadruples
$(M_0,\omega_0,\Phi_0,A_0)$ and $(M_1,\omega_1,\Phi_1,A_1)$.
Then the corresponding twisted \DH distributions are equal, i.e.,
$$ \tDH_{(M_0,\omega_0,\Phi_0)}(A_0) = \tDH_{(M_1,\omega_1,\Phi_1)}(A_1) .$$
\end{Lemma}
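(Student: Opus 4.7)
The plan is to reduce this statement to Lemma~\ref{cobordism twisted DH} by choosing an appropriate equivariant representative on the cobording manifold. The key observation is that the cobordism data at the level of quadruples $(M,\omega,\Phi,A)$ only specifies equivariant cohomology classes, whereas Lemma~\ref{cobordism twisted DH} is phrased in terms of actual closed equivariant differential forms. So the work is to lift a cohomological equality to the chain level.

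Concretely, let $(\tM, \tomega, \tPhi, \tA)$, together with a diffeomorphism $i \colon -M_0 \sqcup M_1 \to \del \tM$, be the given proper Hamiltonian cobordism between the quadruples. First I would choose any closed equivariant differential form $\teta \in \Omega_G^*(\tM)$ representing $\tA \in H_G^*(\tM)$; such a form exists by definition of equivariant cohomology via the Cartan model. Set
\[
 \eta_0 \sqcup \eta_1 := i^*\teta,
\]
so that each $\eta_j$ is a closed equivariant form on $M_j$, and by naturality of pullback on equivariant cohomology together with the cobordism requirement $i^*\tA = A_0 \sqcup A_1$, the class $[\eta_j] \in H_G^*(M_j)$ equals $A_j$ for $j=0,1$.

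Now observe that with these chosen representatives, the quadruple $(\tM, \tomega, \tPhi, \teta)$ satisfies all the conditions of a proper Hamiltonian cobordism between $(M_0, \omega_0, \Phi_0, \eta_0)$ and $(M_1, \omega_1, \Phi_1, \eta_1)$ in the sense of~\eqref{eq:def proper Hamiltonian cobordism with form}: the conditions on $\tomega$ and $\tPhi$ hold by hypothesis, the condition on $\teta$ holds by construction, and $\tPhi$ is proper since $(\tM,\tomega,\tPhi,\tA)$ is a proper Hamiltonian cobordism. Applying Lemma~\ref{cobordism twisted DH} yields
\[
 \tDH_{(M_0,\omega_0,\Phi_0)}(\eta_0) = \tDH_{(M_1,\omega_1,\Phi_1)}(\eta_1).
\]
Finally, by Definition~\ref{twisted DH with coh} (whose well-definedness was established via Lemma~\ref{cohomologous are cobordant}), each side equals the corresponding twisted \DH distribution associated with the cohomology class $A_j$, giving the desired equality.

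The main subtlety, and the only place the argument requires care, is the step of producing the representatives $\eta_0$ and $\eta_1$ simultaneously from a single closed equivariant form $\teta$ on $\tM$: one must pull back on the level of forms, not merely of cohomology classes, in order to feed the result into Lemma~\ref{cobordism twisted DH}. This is straightforward because the Cartan model is functorial under smooth equivariant maps, and the restriction to the boundary of a closed equivariant form is automatically closed. No further obstacle arises.
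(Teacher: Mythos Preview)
Your proposal is correct and follows essentially the same approach as the paper: choose a closed equivariant form $\teta$ on the cobording manifold representing $\tA$, pull it back to obtain representatives $\eta_0,\eta_1$ of $A_0,A_1$, apply Lemma~\ref{cobordism twisted DH}, and invoke Definition~\ref{twisted DH with coh}. The paper's proof is more terse but logically identical.
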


\begin{proof}
Let $([0,1] \times M,\tomega,\tPhi,\tA)$ be a cobording quadruple,
with diffeomorphism $-M_0 \sqcup M_1 \to \del ([0,1] \times M)$.
Let $\teta$ be an equivariant differential form on $[0,1] \times M$
that represents the class $\tA$,
and let $\eta_0$ and $\eta_1$ be its pullbacks to $M_0$ and $M_1$.
Then $\eta_0$ and $\eta_1$ represent the classes $A_0$ and $A_1$.
The result now follows from Lemma~\ref{cobordism twisted DH}
and Definition~\ref{twisted DH with coh}.
\end{proof}

In analogy with the previous section, we introduce the following
notation. 

\begin{Definition} 
With notation as in Definition~\ref{def:DHgerm}, we define
\begin{equation}\Label{twisted DH on germ} 
\tDH_{\germ_{Z_i}(M,\omega,\Phi)}^v(A \vert_{Z_i})  
 := \tDH_{(U_i, \omega_i, \Phi_i)}(A \vert_{U_i}).
\end{equation}
\end{Definition}

The justification that this notation is well-defined 
follows that given in Section~\ref{sec:DH} for
Definition~\ref{def:DHgerm} except that we use 
Lemmas~\ref{lemma:independent2} and \ref{coh cob twisted DH}.
We may now state and prove the main theorem:

\begin{Theorem}\Label{theorem:twisted-GLS-cobordism}
Let $(M,\omega,\Phi)$ be an even-dimensional Hamiltonian $G$-manifold 
without boundary.
Let $A$ be an equivariant cohomology class in $H^*_G(M)$. 
Let $v \colon M \to \g$ be a bounded taming map
and let $Z = \{ \bfv = 0 \}$ be the corresponding localizing set.
Suppose that $\Phi$ is $v$-polarized on $M$, hence on $Z$.
Let
\[
Z = \bigsqcup_{i \in \mathcal{I}} Z_i
\]
be the decomposition of the localizing set $Z$ into its connected components.
Suppose that, for every $i \in \mathcal{I}$, 
there exist arbitrarily small neighbourhoods
of $Z_i$ that admit smooth equivariant weak deformation retractions to $Z_i$.
Then
\begin{equation} \Label{main twisted}
 \tDH_{(M,\omega,\Phi)}(A) = 
\sum_i \tDH_{\germ_{Z_i}(M,\omega,\Phi)}^v(A \vert_{Z_i}). 
\end{equation}
\end{Theorem}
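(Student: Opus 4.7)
The plan is to mimic the proof of Theorem~\ref{theorem:GLS-cobordism}, simply substituting the twisted analogues of the key ingredients, all of which have been prepared earlier in Section~\ref{sec:twistedDH}. The main observations are that Proposition~\ref{main cobordism} already produces a proper Hamiltonian cobordism between quadruples $(M,\omega,\Phi,A)$ and $(U_Z,\omega_Z,\Phi_Z,A|_{U_Z})$, and that Lemma~\ref{coh cob twisted DH} says that such a cobordism implies equality of twisted \DH distributions. These two facts together play exactly the roles that Proposition~\ref{main cobordism} (in its triple form) and Proposition~\ref{cob then same DH} played in the proof of Theorem~\ref{theorem:GLS-cobordism}.

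Concretely, I would proceed as follows. First, for each connected component $Z_i$ of the localizing set, choose an invariant open neighbourhood $U_i$ of $Z_i$ that admits a smooth equivariant weak deformation retraction to $Z_i$, and arrange these neighbourhoods to have disjoint closures; this is possible by the hypothesis on the $Z_i$. Set $U_Z := \bigsqcup_i U_i$. Apply Proposition~\ref{main cobordism} to obtain a $v|_{U_Z}$-polarized completion $(U_Z,\omega_Z,\Phi_Z)$ of $(U_Z,\omega|_{U_Z},\Phi|_{U_Z})$ relative to $Z$, together with a proper Hamiltonian cobordism between the quadruples $(M,\omega,\Phi,A)$ and $(U_Z,\omega_Z,\Phi_Z,A|_{U_Z})$.

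Second, by Lemma~\ref{coh cob twisted DH}, the existence of this cobordism implies
\[
 \tDH_{(M,\omega,\Phi)}(A) \;=\; \tDH_{(U_Z,\omega_Z,\Phi_Z)}(A|_{U_Z}).
\]
Third, because $U_Z$ is the disjoint union of the $U_i$ and the twisted \DH distribution is defined via an integral over $U_Z$ of a differential form (see~\eqref{eq:def-twistedDH}), the distribution on the right splits additively:
\[
 \tDH_{(U_Z,\omega_Z,\Phi_Z)}(A|_{U_Z}) \;=\; \sum_i \tDH_{(U_i,\omega_i,\Phi_i)}(A|_{U_i}),
\]
where $\omega_i$ and $\Phi_i$ denote the restrictions of $\omega_Z$ and $\Phi_Z$ to $U_i$. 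Finally, invoke the definition~\eqref{twisted DH on germ} of $\tDH^v_{\germ_{Z_i}(M,\omega,\Phi)}(A|_{Z_i})$ to identify each summand, which yields~\eqref{main twisted}.

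The only point that requires a moment's care is ensuring that the definition~\eqref{twisted DH on germ} is unambiguous, i.e., independent of the choice of neighbourhood $U_i$ and of the polarized completion. This independence was essentially established immediately before the theorem by combining Lemma~\ref{lemma:independent2} with Lemma~\ref{coh cob twisted DH}, in exact parallel to Corollary~\ref{corollary:DH indep of nbhd} in the untwisted setting. Thus there is no genuine obstacle: the entire proof is a straightforward transcription of the proof of Theorem~\ref{theorem:GLS-cobordism}, with Lemma~\ref{coh cob twisted DH} replacing Proposition~\ref{cob then same DH} and with Proposition~\ref{main cobordism} used in its quadruple form. The substantive work has already been carried out in the preparatory lemmas of this section; this final theorem is an assembly of them.
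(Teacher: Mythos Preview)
Your proposal is correct and essentially identical to the paper's own proof: the paper likewise chooses disjoint neighbourhoods $U_i$, applies Proposition~\ref{main cobordism} in its quadruple form to obtain a proper Hamiltonian cobordism between $(M,\omega,\Phi,A)$ and $(U_Z,\omega_Z,\Phi_Z,A|_{U_Z})$, invokes Lemma~\ref{coh cob twisted DH} for equality of twisted \DH distributions, splits the integral over the disjoint union, and identifies each summand via~\eqref{twisted DH on germ}.
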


\begin{proof}
For each $i \in \calI$, choose an invariant neighbourhood $U_i$ of $Z_i$
that admits an equivariant smooth weak deformation retraction to $Z_i$.
Moreover, choose these neighbourhoods $U_i$ to be sufficiently small
so that their closures are disjoint.
Let $U_Z$ be the union of the neighbourhoods $U_i$.  Let $(U_Z,\omega_Z,\Phi_Z)$
be a $v|_{U_Z}$-polarized completion relative to $Z$ that is obtained
from Proposition~\ref{main cobordism}.  In particular, there exists
a proper Hamiltonian cobordism between $(U_Z,\omega_Z,\Phi_Z,A|_{U_Z})$ 
and $(M,\omega,\Phi,A)$.  By Lemma~\ref{coh cob twisted DH}, 
$(M,\omega,\Phi,A)$ and $(U_Z,\omega_Z,\Phi_Z,A|_{U_Z})$
have the same twisted \DH distribution:
\begin{equation} \Label{eq4}
   \tDH_{(M,\omega,\Phi)}(A) = \tDH_{(U_Z,\omega_Z,\Phi_Z)}(A|_{U_Z}).
\end{equation}

Let $\omega_i$ and $\Phi_i$ be the restrictions of $\omega_Z$ and $\Phi_Z$
to the component $U_i$.  Then the twisted \DH distribution 
of $(U_Z,\omega_Z,\Phi_Z,A|_{U_Z})$ 
is the sum of the twisted \DH distributions of $(U_i,\omega_i,\Phi_i,A|_{U_i})$:
\begin{equation} \Label{eq5}
   \tDH_{(U_Z,\omega_Z,\Phi_Z)}(A|_{U_Z})
 = \sum_i \tDH_{(U_i,\omega_i,\Phi_i)}(A|_{U_i}).
\end{equation}

Because $(U_i,\omega_i,\Phi_i)$ is a $v|_{U_i}$-polarized completion
of $(U_i,\omega|_{U_i},\Phi|_{U_i})$ relative to $Z_i$,
\begin{equation} \Label{eq6}
   \tDH^v_{\germ_{Z_i}(M,\omega,\Phi)}(A) 
 = \tDH_{(U_i,\omega_i,\Phi_i)}(A|_{U_i}).
\end{equation}

Equation~\ref{main twisted}
follows from~\eqref{eq4}, \eqref{eq5}, and~\eqref{eq6}.
\end{proof}

\begin{Remark}\Label{equivalent maps give same loc twisted}
As in the untwisted case (see\ Remark~\ref{equivalent maps give same loc}),
taming maps that are equivalent in the sense of Remark~\ref{rk:equivalent} 
give rise to the same localization formula. 
\end{Remark}

\section{The Brianchon-Gram polytope decomposition 
and symplectic toric manifolds}
\Label{sec:BrianchonGram}

This paper was originally motivated by a question that Shlomo Sternberg 
posed some years ago.  We first recall the context of his question in some
detail. 

As was mentioned in the introduction, it is known 
that the Atiyah-Bott-Berline-Verge localization
theorem in equivariant cohomology \cite{BerVer:1982, BerVer:1983,
AtiBot:1984}, when applied to the exponent of the equivariant
symplectic form of a compact symplectic toric manifold, yields
the measure-theoretic version of the Lawrence-Varchenko polytope
decomposition \cite{Law91, Var87}, 
when applied to the corresponding momentum polytope.  As an
example, Figure~\ref{LV example} illustrates a Lawrence-Varchenko
polytope decomposition that corresponds to localization on a $\CP^2$.

\begin{figure}[h]
\psfrag{+}{$+$}
\psfrag{=}{\Large$=$}
\psfrag{-}{$-$} 
\begin{center}
\includegraphics[height=6cm]{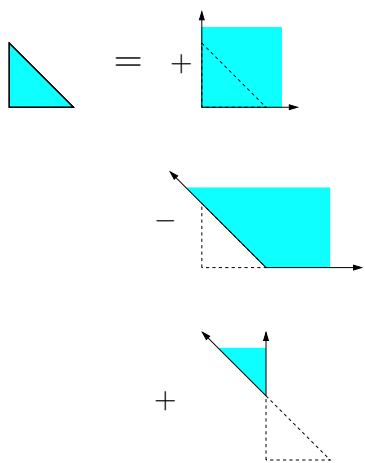}
\end{center}
\begin{center} 
\caption{A Lawrence-Varchenko decomposition of a triangle.
The summands on the right hand side correspond
to the vertices of the triangle.}
\Label{LV example}
\end{center}
\end{figure}

Motivated by this correspondence, Shlomo Sternberg pointed at a
different (and classical) 
polytope decomposition
that goes back to Brianchon and Gram \cite{Bri1837, Gram1874} (see also
\cite{She67}) and asked the following question. 

\begin{Question}\Label{question:shlomo} {\bf (Shlomo Sternberg)}
Is there a localization formula on manifolds that corresponds 
to the Brianchon-Gram polytope decomposition in the same way 
that the Atiyah-Bott-Berline-Vergne localization formula corresponds 
to the Lawrence-Varchenko polytope decomposition?
\end{Question}

As an example, Figure~\ref{BG triangle} illustrates the Brianchon-Gram
decomposition of the same polytope as in Figure~\ref{LV example}.

\begin{figure}[h]
\psfrag{+}{$+$}
\psfrag{-}{$-$}
\psfrag{=}{\Large$=$}
\begin{center}
\includegraphics[height=6cm]{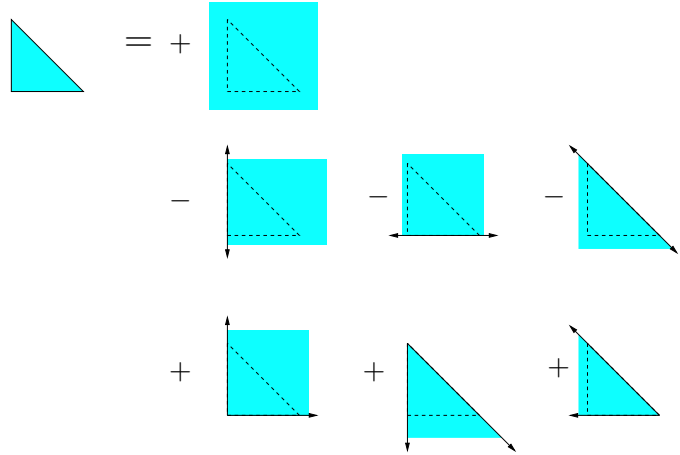}
\end{center}
\begin{center}
\caption{The Brianchon-Gram decomposition of a triangle.
The summands on the right hand side correspond to the faces 
of various dimensions of the triangle.}
\Label{BG triangle}
\end{center}
\end{figure}

\bigskip

These decompositions can be described as follows. 
Let $\Delta$ be an $n$-dimensional simple polytope in $\R^n$
(see Appendix~\ref{sec:appendix} for definitions).
The \textbf{tangent cone} of $\Delta$ at a face $F$ is defined to be
\[
C_F = \{x+\lambda(y-x) \ \vert \ y \in \Delta,\ x \in F,\ 
        \lambda \in \R_{\geq 0}\}.
\]
One may think of this as the polyhedral cone which ``a near-sighted person
would see'' if she stands at a point in the relative interior
of the face $F$. Clearly, $C_F$ is determined only by the local
structure of $\Delta$ near this point.
The (measure-theoretic version of the) Lawrence-Varchenko decomposition
of $\Delta$ can then be expressed in the equality 
\begin{equation} \Label{measure LV}
 \mu_{\Delta} = \sum_q (-1)^{\eps_q} \mu_{C_q^\sharp} 
\end{equation}
where the summation is over the vertices $q$ of $\Delta$, 
where $\mu_\Delta$
is Lebesgue measure on $\Delta$, where $\mu_{C_q^\sharp}$ is Lebesgue
measure on the cone obtained from the tangent cone to $\Delta$ at $q$
by flipping some of its edge vectors so that they all pair positively
with a pre-chosen ``polarizing vector" in the dual space, and
where $\eps_q$ is the number of edge vectors that are flipped. 
The formula~\eqref{measure LV} has a symplectic-geometric interpretation
as follows. Suppose that $\Delta$ is Delzant; this is equivalent to
the condition that $\Delta$ is the momentum polytope of a
symplectic toric manifold $M$. (See \cite{CdS01} for the definition and basic facts of symplectic toric manifolds
and Delzant polytopes.)
The Duistermaat-Heckman measure of $M$ is precisely 
$\mu_\Delta$. The fixed points for the torus action on $M$ exactly
correspond to the vertices of the polytope $\Delta$ under the momentum
map. For each fixed point $p$, the tangent space $T_pM$, with the
symplectic form, torus action, and orientation induced from those of
$M$, is the symplectic toric manifold corresponding to
the tangent cone $C_q$ of $\Delta$ at the vertex $q$ corresponding to $p$. 
This tangent space is isomorphic to $\C^n$ with
its standard symplectic form and with the torus acting by rotations of
the coordinates. Flipping the symplectic form on some of the
coordinates in $\C^n$ and flipping the corresponding summands in the
formula for the momentum map yields a symplectic vector space
$(T_pM)^\sharp$ with a torus action and with momentum image
$C_q^\sharp$. Taking its \DH measure with respect to its original
orientation, which differs from the symplectic orientation if $\eps_q$
is odd (the \DH measure is then negative), the measure-theoretic
Lawrence-Varchenko decomposition~\eqref{measure LV} becomes the
assertion that the \DH measure of $M$ is equal to that of
$$\bigsqcup_p (T_pM)^\sharp.$$
In the spirit of this manuscript, this equality of \DH measures can be
deduced from the fact that 
$M$ is cobordant to $\bigsqcup_p (T_pM)^\sharp$
as Hamiltonian $T$-manifolds (with $T=(S^1)^n$)
equipped with proper momentum maps~\cite[Chap.~4, Sec.~6]{GGK}.

We next recall the Brianchon-Gram polytope decomposition formula
\cite{Bri1837, Gram1874}. 
Let $\Delta$ be a polytope. 
The Brianchon-Gram formula is the following relation between the characteristic
functions of the polytope and of the tangent cones of its faces:
\begin{equation}\Label{eq:Brianchon-Gram}
{\bf 1}_{\Delta}(x) = \sum_F (-1)^{\dim(F)} {\bf 1}_{C_F}(x).
\end{equation}
Note that the summation is now over all faces $F$ of $\Delta$ of all
dimensions, in contrast to the Lawrence-Varchenko decomposition. 
The measure-theoretic version of this decomposition is 
\begin{equation} \Label{measure BG}
 \mu_\Delta = \sum_F (-1)^{\dim F} \mu_{C_F}
\end{equation}
where $\mu_\Delta$ is again Lebesgue measure on $\Delta$, and
$\mu_{C_F}$ is Lebesgue measure on the tangent cone $C_F$. 
Again, the formula~\eqref{measure BG} has a
symplectic-geometric interpretation as follows. Suppose again that
$\Delta$ is Delzant and let $M$ be the corresponding symplectic toric
manifold. 
The
measure $\mu_{C_F}$ is then the \DH measure of the symplectic toric
manifold $M_{C_F}$ that corresponds to the tangent cone $C_F$. When
$\dim F = \ell$, this symplectic toric manifold $M_{C_F}$ is
isomorphic to $(S^1 \times \R)^\ell \times \C^{n-\ell}$ with the
standard symplectic form and with the torus acting by rotations of the
$S^1$ factors and of the $\C$ factors. By flipping the symplectic form
on the first $\ell$ components of this product, we get an oriented
symplectic toric manifold which we denote by $M_{C_F}^\sharp$ whose momentum
image is still $C_F$. The orientation arising from the symplectic form 
is consistent with the original orientation only if $\ell$ is even.
By taking the \DH measure of
$M_{C_F}^\sharp$ with respect to the original orientation, 
\eqref{measure BG} becomes the assertion that the \DH measure of
$M_\Delta$ is equal to that of $\bigsqcup_F M_{C_F}^\sharp$. 
We show below that this assertion coincides with our localization 
formula~\eqref{main} when applied to the toric manifold $M$
with an appropriate taming map.

\bigskip

Throughout this section we work with an identification 
\(\t \cong \R^n \cong \t^* \).  
Suppose as above that $\Delta$ is a Delzant polytope and let $M$ be the 
corresponding symplectic toric manifold. 
It turns out that the measure-theoretic formula~\eqref{measure BG} 
is what we obtain 
from Theorem~\ref{theorem:GLS-cobordism} when applied to $M$ with a
taming map that comes from a function that satisfies the conditions
described in the following lemma.

\begin{Lemma}\Label{lemma:bump-function}
Let $\Delta \subset \R^n$ be an $n$-dimensional simple polytope. Then
there exists an open neighbourhood $U$ of $\Delta$ in $\R^n$ and a
smooth function $\rho \colon U \to \R$ with the following properties: 
\begin{enumerate} 
\item 
For each
face $F$ of $\Delta$, the restriction $\rho|_{\relint(F)}$ of $\rho$ to the
relative interior of $F$ has a unique critical point $x_F$.
\item 
Let $F$ be an $\ell$-dimensional face of $\Delta$,
and let $x_F$ be the critical point of $\rho|_{\relint(F)}$.
Then there exist $\veps > 0$
and affine coordinates $(x_1,\ldots,x_\ell,y_1,\ldots,y_{n-\ell})$
on $\R^n$ with respect to which
\begin{enumerate} 
\item 
   the point $x_F$ becomes the origin, and a neighbourhood $x_F$ in $F$ 
   becomes the set 
   $U_{x_F} := (-\veps, \veps)^\ell \times (-\veps,0]^{n-\ell}$ 
   for some $\veps > 0$.
\item
The function $\rho|_{U_{x_F}}$ becomes
\[
\rho(x_1, \ldots, x_\ell, y_1, \ldots, y_{n-\ell}) 
   = \sum_{j=1}^\ell x_j^2 + \sum_{j=1}^{n-\ell} y_j
\]
after composing it with an affine map of $\R$
(i.e., multiplying by a constant and adding a constant).
\end{enumerate} 
\end{enumerate}
\end{Lemma}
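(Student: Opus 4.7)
The plan is to construct $\rho$ by induction on the dimension of the faces of $\Delta$, proceeding from vertices ($\ell=0$) up to the top-dimensional face ($\ell=n$). First I would select a distinguished point $x_F$ in the relative interior of each face $F$, and I would observe that, because $\Delta$ is simple, one may choose affine coordinates $(x_1,\ldots,x_\ell,y_1,\ldots,y_{n-\ell})$ on $\R^n$ near $x_F$ in which $x_F$ becomes the origin and a neighbourhood of $x_F$ in $\Delta$ becomes the corner $(-\veps,\veps)^\ell \times (-\veps,0]^{n-\ell}$; indeed, the $n-\ell$ facets containing $F$ give the $y_j$-coordinates (with outward normals corresponding to $y_j>0$) and the remaining linearly independent directions give the $x_j$-coordinates tangent to $F$. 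In a small ball $V_F$ around $x_F$ in such coordinates, define the \emph{local model} $\rho_F := \sum_{j=1}^{\ell} x_j^2 + \sum_{j=1}^{n-\ell} y_j$, which realizes condition~(2) exactly.

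At the inductive step, suppose a smooth function has been produced on an open neighbourhood of the $(k-1)$-skeleton of $\Delta$ that already satisfies conditions~(1) and~(2) for every face of dimension less than $k$. I would then extend this function into a neighbourhood of each $k$-dimensional face $F$ as follows: inside $V_F$ set $\rho := \rho_F$; in an annular region between $V_F$ and a neighbourhood of the relative boundary $\partial F$, glue $\rho_F$ smoothly to the inherited function using a $G$-invariant partition of unity adapted to $F$; and near $\partial F$ use the function from the previous stage. The gluing must be arranged so that $\rho|_{\relint(F)}$ has no critical point outside $V_F$, which can be engineered because $\rho_F|_{F \cap V_F} = \sum x_j^2$ is radially monotonic about $x_F$, so its gradient along $F$ points radially outward from $x_F$ and can be matched to a similarly outward-pointing behavior of the extension.

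The main obstacle is to prevent the cutoff functions used in the gluing from producing spurious critical points, either on $\relint(F)$ itself or on lower-dimensional boundary faces. I would handle this by a scaling argument: condition~(2)(b) permits composing the model $\rho_F$ with an arbitrary affine map of $\R$, so one may rescale $\rho_F$ by a very small positive constant while simultaneously taking the cutoffs to vary slowly (so their gradients are controlled). In the $y$-directions the linear term $\sum y_j$ contributes a gradient component of size $1$ (after the affine normalization, a fixed positive constant), which dominates any tangential error introduced by the patching; in the $x$-directions on $F$, the radial structure of $\sum x_j^2$ and the inductive outward-monotonicity ensure no new critical points arise. Carrying out this bookkeeping carefully across the face poset, while explicitly writing down the partition of unity and the required estimates on derivatives, is precisely the brute-force differential-topological content of Appendix~\ref{sec:appendix}; condition~(1) then follows from the inductive control and condition~(2) is immediate from the fact that $\rho \equiv \rho_F$ on each $V_F$.
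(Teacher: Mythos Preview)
Your outline has the right architecture---induction over skeleta, local affine models at each $x_F$, and patching---and this matches the paper's strategy. But the step where you claim to avoid spurious critical points is where the real work lies, and your ``scaling argument'' does not do it. The problem is that near $\partial F$ the previously constructed function $f_{\ell-1}$ has its gradient controlled relative to the lower-dimensional faces of $\partial F$, not relative to the radial direction from your new center $x_F$; there is no a~priori reason its gradient should point ``outward from $x_F$'' as you assume. Rescaling $\rho_F$ by a small constant only makes its gradient smaller, so it is \emph{more} easily swamped by cutoff derivatives, not less; and making cutoffs vary slowly does not help if the two functions being blended have gradients pointing in genuinely different tangential directions along $F$.

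The paper closes this gap with a device you are missing: auxiliary inward-pointing vector fields $\xi_j$ along the facets (Lemma~\ref{lemma:xi}) and an inductive hypothesis (condition~(f3)) that $f_{\ell-1}$ is strictly decreasing along each $\xi_j$. From these one builds a single vector field $\eta^F$ on $F$ that equals $-\partial/\partial r$ near $x_F$, equals a positive combination of the $\xi_j$ near $\partial F$, and along which $f_{\ell-1}$ is strictly decreasing everywhere it is defined. The flow of $-\eta^F$ is then used to reparametrize $F$ diffeomorphically as a standard ball, so that both the quadratic model $\zeta$ and the pulled-back $f_{\ell-1}$ are strictly increasing in the new radial coordinate; a radial convex combination then has no critical point away from the center (Lemma~\ref{properties of combination}). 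This reparametrization-by-flow is the key idea, and it is what replaces your unsubstantiated ``outward-monotonicity'' assumption. (Minor point: there is no group $G$ here, so the phrase ``$G$-invariant partition of unity'' is out of place.)
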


The proof of this lemma is technical and unrelated to the arguments
in this section so we relegate it to an appendix.
We prove (an equivalent version of) the lemma in Appendix~\ref{sec:appendix}
as Parts (A) and (C) of Proposition~\ref{prop:bump-function new}.

Now suppose that $(M,\omega,\Phi)$ is a symplectic toric $T$-manifold
with momentum polytope $\Delta = \Phi(M)$. 
Let $\rho$ be a function as specified in Lemma~\ref{lemma:bump-function},
and let $v = d(-\rho) \circ \Phi \colon M \to \t$ be the taming map
corresponding to $-\rho$.
Let $Z = \{ v^\sharp=0 \}$ be the corresponding localizing set.
We begin with the following observation.

\begin{Lemma}\Label{lemma:Z face} 
$$ Z = \bigsqcup_F Z_F ,$$
where the union is over all the faces $F$ of $\Delta$,
and where $Z_F = \Phi\Inv( \{ x_F \} )$.
Moreover, every $Z_F$ consists of exactly one $T$-orbit. 
\end{Lemma}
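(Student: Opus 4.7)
The plan is to apply Lemma~\ref{lemma:crit-rho} and combine it with the standard orbit structure of a symplectic toric manifold. First, note that the localizing set associated to the taming map $v = d(-\rho) \circ \Phi$ coincides with the one associated to $d\rho \circ \Phi$, since multiplying $\rho$ by $-1$ negates the induced vector field but preserves its zero set. Thus Lemma~\ref{lemma:crit-rho} applies: a point $x \in M$ lies in $Z$ if and only if $\Phi(x)$ is a critical point of the restriction of $\rho$ to $\Phi(S)$, where $S$ is the orbit type stratum through $x$.

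Next I would invoke the standard structure theory of symplectic toric manifolds (see for instance \cite{CdS01}): the orbit type stratification of $M$ is in order-reversing bijection with the face stratification of $\Delta = \Phi(M)$. More precisely, the orbit type stratum $S_F$ corresponding to a face $F$ satisfies $\Phi(S_F) = \relint(F)$, and the momentum map induces a bijection between the $T$-orbits in $S_F$ and the points of $\relint(F)$. In particular, $\Phi$ induces a bijection between $T$-orbits in $M$ and points in $\Delta$, so for any $y \in \Delta$ the preimage $\Phi\Inv(y)$ is a single $T$-orbit.

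Combining these two ingredients, for each face $F$ of $\Delta$ we have $x \in Z \cap S_F$ if and only if $\Phi(x)$ is a critical point of $\rho|_{\relint(F)}$, which by Part (1) of Lemma~\ref{lemma:bump-function} occurs precisely when $\Phi(x) = x_F$. Since $x_F \in \relint(F)$, the preimage $\Phi\Inv(x_F)$ is entirely contained in $S_F$, so $Z \cap S_F = \Phi\Inv(x_F) = Z_F$. Taking the disjoint union of the orbit type strata recovers $M$, hence
\[
Z = \bigsqcup_F (Z \cap S_F) = \bigsqcup_F Z_F,
\]
and the union is disjoint because the points $x_F$ lie in pairwise disjoint relative interiors of faces. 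Finally, each $Z_F = \Phi\Inv(x_F)$ is a single $T$-orbit by the orbit-to-polytope bijection recalled above. I do not anticipate a substantive obstacle here; the statement is essentially a translation of Lemma~\ref{lemma:crit-rho} and Lemma~\ref{lemma:bump-function}(1) through the well-known dictionary between orbits and polytope points for toric manifolds.
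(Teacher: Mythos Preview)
Your argument is correct and follows essentially the same route as the paper's proof: invoke Lemma~\ref{lemma:crit-rho} to identify $Z$ with the union over faces $F$ of $\Phi^{-1}$ of the critical locus of $\rho|_{\relint F}$, use property~(1) of Lemma~\ref{lemma:bump-function} to reduce this to $\Phi^{-1}(\{x_F\})$, and then appeal to the fact that momentum fibres in a symplectic toric manifold are single $T$-orbits. Your added remark about the sign of $\rho$ not affecting the localizing set is a harmless clarification.
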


\begin{proof}
We have
\begin{equation} \Label{Z is}
Z = \Crit(\rho \circ \Phi) 
  = \bigsqcup_F \{ x \in M : \Phi(x)  \textup{ is a critical
  point of } \rho|_{\relint(F)} \}.
\end{equation}
Indeed, the first equality 
is the content of~\eqref{Z=Crit}, 
and the second equality follows from Lemma~\ref{lemma:crit-rho},
since the orbit type strata of a symplectic toric manifold
are exactly the preimages of the relative interiors 
of the faces of its momentum polytope.
By the construction of $\rho$, 
(specifically property (1) of Lemma~\ref{lemma:bump-function}),
the term in the union~\eqref{Z is} that corresponds to the face $F$
is exactly $\Phi\Inv(\{ x_F \})$.
Because in a toric manifold the momentum level sets
are exactly the $T$-orbits, $\Phi\Inv(\{ x_F \})$ is a $T$-orbit.
\end{proof}

Our next task is to explicitly construct a neighbourhood $U_Z$ of $Z$
and a $v$-polarized completion $(U_Z, \omega_Z, \Phi_Z)$ 
of $(U_Z, \omega \vert_{U_Z}, \Phi \vert_{U_Z})$ 
relative to $Z$ for which an
application of Theorem~\ref{theorem:GLS-cobordism} and a concrete
computation of the right hand side of~\eqref{main} 
for our choice of $(U_Z, \omega_Z, \Phi_Z)$ 
yield the measure-theoretic Brianchon-Gram formula. 
By Lemma~\ref{lemma:Z face}, we can construct $U_Z$
as a disjoint union, over faces $F$, of neighbourhoods $U_F$
of $Z_F$, and we can construct the polarized completion
separately on each $U_F$.
The following result is the main technical tool that we need:

\begin{Proposition}\Label{proposition:brianchon-gram}
Let $(M,\omega,\Phi)$ be a compact connected symplectic toric manifold
with momentum polytope $\Delta = \Phi(M)$.
Let $\rho \colon \Delta \to \R$ be a smooth function
as in Lemma~\ref{lemma:bump-function}, and 
let $v \colon M \to \t$ be the taming map corresponding to $-\rho$. 
Let $F$ be a face of $\Delta$ and
let 
\[
C_F = \{ x + \lambda(y-x) \ | \ y \in \Delta, x \in F, \lambda \in
\R_{\geq 0}\}
\]
be the tangent cone of $\Delta$ at $F$. 
Let $Z_F$ be the component of the localizing set
that corresponds to the face $F$ as in Lemma~\ref{lemma:Z face}.
Then there exist
\begin{itemize}
\item an arbitrarily small $T$-invariant tubular neighbourhood $U_F$ of $Z_F$;
\item a $v$-polarized completion $(U_F,\omega_F,\Phi_F)$ of
  $(U_F,\omega|_{U_F}, \Phi|_{U_F})$; and
\item an isomorphism of (oriented) Hamiltonian $T$-manifolds 
  between $(U_F,\omega_{F},\Phi_{F})$ and
  the symplectic toric manifold $(M_{C_F}, \omega_{C_F}, \Phi_{C_F})$ 
corresponding to $C_F$, which carries the orientation on $U_F$
to the symplectic orientation on $M_{C_F}$ if $\dim F$ is even and
to the opposite of the symplectic orientation on $M_{C_F}$ if $\dim F$ is odd.
\end{itemize}
\end{Proposition}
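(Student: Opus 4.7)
The plan is to realize the isomorphism $\tilde\psi \colon U_F \to M_{C_F}$ asserted in the proposition as a composition of two pieces: a local equivariant symplectomorphism coming from the symplectic slice theorem on a tubular neighborhood of $Z_F$, and a ``stretching'' diffeomorphism that inflates a disk neighborhood of the core orbit in $M_{C_F}$ to all of $M_{C_F}$. The sign flip built into this stretching is what simultaneously enables $v$-polarization and produces the orientation sign $(-1)^{\dim F}$.

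First, I would apply the equivariant symplectic slice theorem (the local normal form for Hamiltonian toric actions) at the $T$-orbit $Z_F$. For sufficiently small $\delta > 0$ this yields a $T$-equivariant symplectomorphism
\[ \psi_0 \colon U_F \;\longrightarrow\; T^\ell \times B_\delta^\ell \times B_\delta^{2(n-\ell)} \;\subset\; M_{C_F}, \]
where $U_F$ is a $T$-invariant tubular neighborhood of $Z_F$ and the image is a disk neighborhood of the core orbit in $M_{C_F} = T^\ell \times \R^\ell \times \C^{n-\ell}$, intertwining $\omega$ with $\omega_{C_F}$ and $\Phi$ with $\Phi_{C_F}$. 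In the coordinates $(t, \eta, w)$ on the image one has $\Phi = x_F + (\eta, -|w|^2/2)$. I would shrink $U_F$ if necessary so that $\Phi(U_F)$ lies entirely in the neighborhood where the canonical form $\rho = \sum x_j^2 + \sum y_j$ of Lemma~\ref{lemma:bump-function} is valid. Any size of $\delta$ is permitted, which gives the ``arbitrarily small'' clause.

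Next, I would define a $T$-equivariant diffeomorphism
\[ s \colon T^\ell \times B_\delta^\ell \times B_\delta^{2(n-\ell)} \longrightarrow M_{C_F}, \qquad s(t, \eta, w) = \Bigl(t,\; -\frac{\eta}{\delta^2 - |\eta|^2},\; \frac{w}{\delta^2 - |w|^2}\Bigr), \]
set $\tilde\psi := s \circ \psi_0$, and put $(\omega_F, \Phi_F) := \tilde\psi^*(\omega_{C_F}, \Phi_{C_F})$. By construction $\tilde\psi$ is an isomorphism of Hamiltonian $T$-manifolds between $(U_F, \omega_F, \Phi_F)$ and $(M_{C_F}, \omega_{C_F}, \Phi_{C_F})$. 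Agreement on $Z_F$ in the sense of Definition~\ref{agree on} follows because both $\Phi$ and $\Phi_F$ take the constant value $x_F$ on $Z_F$, and Hamilton's equation applied to a constant momentum map forces the pullbacks of $\omega$ and $\omega_F$ to any pair of vectors tangent to $Z_F$ to vanish.

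The heart of the argument is $v$-polarization. Using the local form of $\rho$ together with $\Phi - x_F = (\eta, -|w|^2/2)$, the taming map is $v = -(2\eta_1, \ldots, 2\eta_\ell, 1, \ldots, 1)$ in the coordinates $(t, \eta, w)$. A direct computation with $\xi(\eta) = -\eta/(\delta^2 - |\eta|^2)$ and $z(w) = w/(\delta^2 - |w|^2)$ gives
\[ \langle \Phi_F - x_F,\, v \rangle \;=\; \frac{2|\eta|^2}{\delta^2 - |\eta|^2} \;+\; \frac{|w|^2}{2(\delta^2 - |w|^2)^2}, \]
which is nonnegative and blows up as $(\eta, w)$ approaches the boundary of $B_\delta^\ell \times B_\delta^{2(n-\ell)}$. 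Since $v$ is bounded on $U_F$, the full pairing $\langle \Phi_F, v \rangle$ differs from this by a bounded term and is therefore proper and bounded from below on $U_F$. For the orientation claim, $\psi_0$ preserves orientation; the $w$-component of $s$ is an orientation-preserving radial diffeomorphism of $\C^{n-\ell}$; and the $\eta$-component of $s$ is the composition of an orientation-preserving radial rescaling with the antipodal map $\eta \mapsto -\eta$ on $\R^\ell$, whose determinant is $(-1)^\ell$. Hence $\tilde\psi$ preserves orientation if and only if $\dim F$ is even, exactly as asserted.

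The main obstacle is the polarization step: the minus sign on $\eta$ in $s$ is indispensable, because without it $\langle \Phi_F, v \rangle$ would grow only like $-|\xi|$ in the unbounded $\xi$-direction of $M_{C_F}$ and fail to be bounded below. This same sign flip is precisely what produces the orientation sign $(-1)^{\dim F}$, so the polarization requirement and the orientation claim are coupled through the single choice of the stretching $s$.
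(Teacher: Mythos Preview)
Your proof is correct and follows essentially the same strategy as the paper: local normal form near $Z_F$, followed by an equivariant ``stretching'' diffeomorphism onto $M_{C_F}$, with the polarized completion defined by pullback and the orientation sign arising from a sign flip on the $\R^\ell$ factor. The only cosmetic differences are that the paper uses a cube-shaped neighbourhood $(S^1\times(-\varepsilon,\varepsilon))^\ell\times (D^2_\varepsilon)^{n-\ell}$ with a coordinate-wise odd stretching function $g$, and places the sign flip in the symplectic form (their $\omega_{C_F}^\sharp$) rather than in the diffeomorphism; your choice to build the antipodal map $\eta\mapsto -\eta$ into $s$ is arguably cleaner, and your explicit formula for $\langle\Phi_F - x_F, v\rangle$ plays exactly the role of the paper's computation~\eqref{dot product}.
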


\begin{proof}
Let $\ell$ denote the dimension of $F$.
By Lemma~\ref{lemma:bump-function}
we assume that the affine span of $F$ 
is $\R^\ell \times \{0\} \subseteq \R^n$, the critical point
$x_F$ is the origin $0$, the polytope $\Delta$ 
coincides near $x_F$ with the sector
$\R^\ell \times \R^{n-\ell}_{\leq 0}$,
and the function $\rho$ near $x_F$ is of the form 
\begin{equation}\Label{eq:formula for rho}
\rho(x_1, \ldots, x_\ell, y_1, \ldots, y_{n-\ell}) = \sum_{j=1}^\ell
x_j^2 + \sum_{j=1}^{n-\ell} y_j.
\end{equation}
The tangent cone $C_F$ is the sector
$$ \{ (x_1,\ldots,x_\ell,y_1,\ldots,y_{n-\ell}) \ | \ 
        y_1,\ldots,y_{n-\ell} \leq 0 \} .$$
The corresponding symplectic toric manifold is
$$ M_{C_F} = (S^1 \times \R)^\ell \times \C^{n-\ell} $$
where the torus $T \cong T^\ell \times T^{n-\ell}$ acts by rotating
the $S^1$ coordinates and the $\C$ coordinates.  
The symplectic form on $M_{C_F}$, which we denote $\omega_{\std}$,
is the split form which is equal to $d\theta \wedge dt$ on every
cylinder 
(parametrized as $\{ (e^{i\theta},t) \}$)
and is standard on the $\C^{n-\ell}$ factor.
The momentum map is
$$ \Phi_{C_F} 
 \left( (e^{i\theta_1},t_1) , \ldots , (e^{i\theta_\ell},t_\ell) , 
                  z_1, \ldots , z_{n-\ell} \right) 
 = \left( t_1 , \ldots , t_\ell , -\frac{|z_1|^2}{2} , \ldots ,
            -\frac{|z_{n-\ell}|^2}{2} \right) .$$

The local normal form theorem identifies a neighbourhood $U_F$ of $Z_F$ 
in $M$ with the open subset
\begin{equation}\Label{eq:circle and disk} 
(S^1 \times (-\varepsilon, \varepsilon))^\ell \times
(D^2_\varepsilon)^{n-\ell} 
\end{equation} 
of $M_{C_F}$, for some $\veps > 0$.
Here $D^2_\varepsilon$ is a disc with momentum image $(-\varepsilon,0]$.
Thus its radius is $\sqrt{2\varepsilon}$.

By the explicit formula~\eqref{eq:formula for rho}
for $\rho$, and identifying $\t$ and $\t^*$ with $\R^n$,
if $\veps$ is sufficiently small,
the identification of $U_F$ with the open subset~\eqref{eq:circle and disk} 
of $M_{C_F}$ carries the taming map $v$ to the pullback via $\Phi_{C_F}$
of
\begin{equation}\Label{eq:drho in general}
(x_1, \ldots, x_\ell, y_1, \ldots, y_{n-\ell}) \mapsto (-2x_1, \ldots,
-2x_\ell, -1, \ldots, -1). 
\end{equation}
Denote this pullback $v_{\std}$.
The pairing of the momentum map $\Phi_{C_F}$ with the taming map 
$v_{\std}$ is the function
$$ - \sum_{j=1}^\ell 2 t_j^2 + \sum_{j=1}^{n-\ell} \frac{|z_j|^2}{2},$$
which is neither proper nor bounded from below.
Its restriction to the open subset~\eqref{eq:circle and disk}
is bounded but is not proper.

We now equip $M_{C_F}$ with the split symplectic form, 
which we denote $\omega_{C_F}^\sharp$,
which is the \emph{negative} of $d\theta \wedge dt$
on every cylinder component and remains standard on the $\C^{n-\ell}$
component.
This symplectic form $\omega_{C_F}^\sharp$ is consistent with the
original symplectic orientation if $\ell$ is even and inconsistent 
if $\ell$ is odd, and it has the momentum map
$$ \Phi_{C_F}^\sharp = \left( -t_1, \ldots , -t_\ell , 
    - \frac{|z_1|^2}{2} , \ldots , -\frac{|z_{n-\ell}|^2}{2} \right).$$

We will now describe an equivariant diffeomorphism from $U_F$ to $M_{C_F}$
under which the pullbacks of $\omega_{C_F}^\sharp$ and $\Phi_{C_F}^\sharp$
coincide with $\omega$ and $\Phi$ on $Z_F$
(in fact, their further pullbacks to $Z_F$ are zero)
and under which the pullback of $\Phi_{C_F}^\sharp$ is $v$-polarized.

Let \(g \colon (-\varepsilon, \varepsilon) \to \R\) be a diffeomorphism 
such that \(g(-x) = -g(x)\) for all $x$ and such that \(g(x) = x\) on
a neighbourhood of $x=0$. Consider the diagram
  \begin{equation} \Label{eq:reparam}
  \xymatrix @C=0.9in{ 
  (S^1 \times (-\varepsilon, \varepsilon))^\ell 
       \times (D^2_\veps)^{n-\ell} \ar[r]^{\Phi_{C_F}}  \ar[d]_\psi &
  (-\varepsilon, \varepsilon)^\ell 
       \times (-\varepsilon, 0]^{n - \ell} \ar[d] \\
  (S^1 \times \R)^\ell \times (\C)^{n-\ell}
  \ar[r]^{\Phi_{C_F}^\sharp} & \R^\ell \times (-\infty, 0]^{n-\ell}
 }    
  \end{equation}
in which the right vertical map is
\[
(x_1, \ldots, x_\ell, y_1, \ldots, y_{n-\ell}) \mapsto (g(-x_1),
\ldots, g(-x_\ell), g(y_1), \ldots, g(y_{n-\ell}))
\]
and the left vertical map $\psi$ is the map that acts 
on the first $\ell$ factors as 
$(e^{i\theta_k}, x_k) \mapsto (e^{i\theta_k}, g(x_k))$ 
and on the last $n-\ell$ coordinates by
\(z_j = r_j e^{i \theta_j} \mapsto 
         \sqrt{ 2 g (r_j^2/2) } e^{i\theta_j}.\) 
From this explicit description of $\psi$ it follows
that $\psi$ is a $T$-equivariant diffeomorphism
and that the diagram~\eqref{eq:reparam} commutes.

Let $\omega_F := \psi^* \omega_{C_F}^\sharp$
and $\Phi_F := \psi^* \Phi_{C_F}^\sharp $.
The Hamiltonian $T$-manifold
$$ \left( U_F , \omega_F, \Phi_F \right) $$
is isomorphic to $(M_{C_F},\omega_{C_F}^\sharp,\Phi_{C_F}^\sharp)$
since 
the equivariant diffeomorphism $\psi$ provides such an isomorphism.
To finish the proof we must show that $\Phi_F$ is $v$-polarized.
Since the diagram~\eqref{eq:reparam} commutes, this is equivalent
to showing that the composition of the top horizontal arrow
with the right vertical arrow in~\eqref{eq:reparam} is $v$-polarized.
Recall that 
the taming map $v$ is the pullback via $\Phi_{C_F}$ 
of~\eqref{eq:drho in general}.
Since the momentum map $\Phi_{C_F}$, taken with the domain
and codomain as in the top horizontal arrow of~\eqref{eq:reparam},
is proper, it is enough to show that the pairing
of the map~\eqref{eq:drho in general} with the 
right vertical arrow in~\eqref{eq:reparam} is proper 
and bounded from below.
This pairing is the map
$$ (-\veps,\veps)^\ell \times (-\veps,0]^{n-\ell} \to \R $$
that is given by the formula
\begin{align}
 (x_1,\ldots,x_\ell,y_1,\ldots,y_{n-\ell})
    & \mapsto \left( -2x_1,\ldots,-2x_\ell,-1,\ldots,-1 \right)
      \cdot \left( g(-x_1), \ldots, g(-x_\ell), 
                  g(y_1), \ldots, g(y_{n-\ell}) \right) \nonumber \\
   & = 2 \sum_{j=1}^\ell x_j g(x_j) - \sum_{j=1}^{n-\ell} g(y_j).
                           \Label{dot product}
\end{align}
Notice that $y_j$ takes values in $(-\veps,0]$,
and $-g(y_j)$ is nonnegative and approaches $\infty$ as $y_j$
approaches $-\veps$.
Also, $x_j$ takes values in $(-\veps,\veps)$,
and $x_j g(x_j)$ is nonnegative and approaches $\infty$
as $|x_j| \to \pm \veps$.
So the function~\eqref{dot product} is nonnegative,
and for every $L$
there exists $\delta$ such that $0 < \delta < \veps$ 
and such that both $tg(t)$ and $g(t)$ are $\geq L$ 
whenever $\delta \leq |t| < \eps$,
and so the preimage of $[0,L]$ under the function~\eqref{dot product} 
is contained in the compact subset
$[-\delta,\delta]^\ell \times [-\delta,0]^{n-\ell}$
of the domain $(-\veps,\veps)^\ell \times (-\veps,0]^{n-\ell}$.
This shows that the function is proper and bounded from below.
\end{proof} 

This proposition allows us to identify the left and right hand sides
of our localization formula~\eqref{main}, applied to the symplectic
toric manifold $M$ and the taming map obtained from the function $\rho$,
with the left and right hand sides of the 
Brianchon-Gram decomposition~\eqref{measure BG},
applied to the momentum polytope $\Delta$ of $M$.
The right hand side of our localization formula~\eqref{main}
is a summation over the components of the localizing set.
These components exactly correspond to the faces $F$ of $\Delta$.
By Proposition~\eqref{proposition:brianchon-gram},
a neighbourhood of the component that corresponds to the face $F$
has a polarized completion that is isomorphic to the symplectic toric manifold
$M_{C_F}$ with an orientation that is consistent with its symplectic form
if and only if $\dim F$ is even. Thus, the localization formula~\eqref{main}
in this case becomes the equality
$$ \tDH_M = \sum_F (-1)^{\dim F} \tDH_{M_{C_F}}.$$
Since the \DH measure of a symplectic toric manifold is precisely
Lebesgue measure on its momentum polytope,
this equality is precisely the Brianchon-Gram equality~\eqref{measure BG}.

\begin{Remark}
Our results are not the first to relate the Brianchon-Gram polytope 
decomposition to localization. 
A partial answer to Question~\ref{question:shlomo} is given by 
localization theory using the norm-square $\|\Phi\|^2$ of a momentum
map for a
Hamiltonian $G$-space, as developed by Paradan \cite{Par00} 
and Woodward~\cite{Woo05}, 
following Witten~\cite{Witten:1992}.
Indeed, the localization formula with respect to $\|\Phi\|^2$,
applied to the exponent of the 
equivariant symplectic form of a symplectic toric manifold,
yields the Brianchon-Gram decomposition under the following assumption
on the momentum polytope $\Delta$: 
\begin{equation}\Label{assumption}
\begin{minipage}{0.7\linewidth}
for every face $F$ of $\Delta$, of any dimension, 
the point of  $F$ that is closest to the origin
lies in the relative interior of $F$.
\end{minipage}
\end{equation}
This correspondence between the localization formula for $\|\Phi\|^2$
and the Brianchon-Gram decomposition was also observed by Jonathan
Weitsman and was worked out by Agapito and Godinho in \cite{AgaGod05}.
Moreover, when the assumption~\eqref{assumption} on $\Delta$ fails,
Agapito and Godinho show that the localization formula for the
norm-square of the momentum map corresponds to a \emph{new} polytope
decomposition that is different from Brianchon-Gram's. 
\end{Remark}

We close by addressing the issue of the difference between the
Brianchon-Gram formula and its
measure-theoretic version. 

\begin{Remark}
Although the Brianchon-Gram formula~\eqref{eq:Brianchon-Gram} 
can be proved directly, it can also be derived from 
its measure-theoretic version, \eqref{measure BG}.
Because the measures that appear in~\eqref{measure BG}
are constant multiples of Lebesgue measure
outside the union of the affine spans of the facets of $\Delta$,
this measure-theoretic formula
implies the formula~\eqref{eq:Brianchon-Gram} 
whenever $x$ is outside the union of these affine spans.
To prove~\eqref{eq:Brianchon-Gram} for an arbitrary~$x$,
we apply the measure-theoretic formula~\eqref{measure BG} 
to the polytope that is obtained from $\Delta$
by shifting its facets outward
by an amount (depending on $x$) that is small enough
to not affect the values at~$x$ of the left and right hand sides
of~\eqref{eq:Brianchon-Gram}.
\end{Remark}

\section{Example: a circle action on the $2$-sphere}\label{sec:sphere}

As an illustration of our methods, we now work out in detail the case of $S^1$ acting on 
the unit sphere $S^2 \subset \R^3$ with the standard rotation
action. We begin by setting some notation. 
The area form on $S^2$ can be written in cylindrical coordinates 
as $\omega = d\theta \wedge dh$ where $h \colon S^2 \to \R$
is the height function; this equips $S^2$ with the standard orientation. The $S^1$ action is generated by the
vector field $\del/\del\theta$. 
We identify the Lie algebra $\Lie(S^1)$ and its dual $\Lie(S^1)^*$
with $\R$ so that the exponential map becomes 
$\theta \mapsto e^{i\theta}$.
Then Hamilton's equation~\eqref{eq:Hamiltons} becomes
$d \Phi = \iota(\del/\del\theta) \omega$, and the momentum map 
can be given by the height function 
\(\Phi = h,\) as indicated in Figure~\ref{fig:S2}.

\begin{figure}[h]
\psfrag{h}{$h$}
\psfrag{R}{$\R$}
\begin{center}
\includegraphics[height=4cm]{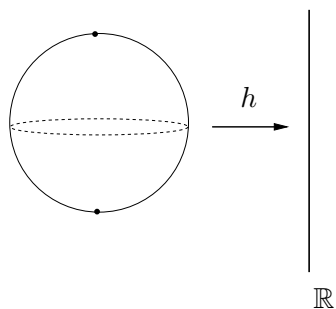}
\end{center}
\begin{center}
\parbox{4.25in}{
\caption{The standard action of $S^1$ on $S^2$ has momentum map the
  height function \(\Phi = h.\) The equator has value \(h=0,\) the north
  pole has \(h=1,\) and the south pole has \(h=-1.\)}\label{fig:S2}
}
\end{center}
\end{figure}

Below, we apply our 
localization theorem to three 
different choices of taming map \(v \colon S^2 \to \Lie(S^1)^* \cong \R,\)
obtaining as a consequence three different polytope
decompositions. The first example recovers the classical
Duistermaat-Heckman theorem and hence the measure-theoretic Lawrence-Varchenko
decomposition. The second is the decomposition given by Woodward's
localization with respect to $\|\Phi\|^2$, and finally, the third is
the measure-theoretic Brianchon-Gram decomposition.

\subsection{Example: a constant taming map}

We first consider the case corresponding to
Example~\ref{example:Prato-Wu}, i.e., where the taming map $v$ 
is equal to a constant $\eta \in \Lie(S^1) \cong \R$. 
For concreteness we take \(\eta =
1.\) In this case \(Z = \{\eta^{\sharp} =
0\} = (S^2)^{S^1} = \{N, S\},\) so the localizing set is the classical 
localizing set consisting of the fixed points of the action. 
An equivariant tubular neighbourhood $U_Z$
of $Z$ consists of two components $U_N$ and $U_S$, equivariant
neighbourhoods of the north and south poles respectively. In order to apply
Theorem~\ref{theorem:GLS-cobordism} we must choose $v$-polarized completions of
$(\Phi = h, \omega = d\theta \wedge dh)$ on both $U_N$ and $U_S$. We
first consider the north pole $N$. By definition, a $v$-polarized completion 
$(U_N,\omega_N,\Phi_N)$ of $(\N, \omega \vert_{U_N}, \Phi \vert_{U_N})$ 
must satisfy $\Phi_N(N) = \Phi(N)=h(N) = 1$ 
and $\Phi_N^v = \Phi_N$ must be proper and bounded below. 
(There is no condition on $\omega_N$ because $\{N\}$ is
$0$-dimensional so the restriction of any $2$-form to that component of
$Z$ is $0$.)

In order to make explicit computations, we choose an
orientation-preserving $S^1$-equivariant diffeomorphism 
(not symplectomorphism)
from an open neighbourhood $U_N$ of $N$ to (all of) $\C$, 
equipped with its standard
orientation and $S^1$-action.  The momentum map for the standard
symplectic form on $\C$ is $- \half \| z \| ^2$ (up to a constant),
which is not bounded below.  To correct this, we therefore equip $\C$
with the \emph{negative} of the standard symplectic form, 
$- \omega_{\std} = -dx \wedge dy$, and we take the momentum map
\begin{equation}\label{eq:PhiN}
 \Phi_N(z) = 1+ \half \| z \|^2, 
\end{equation}
which is both bounded below and proper. Hence we can take 
$(U_N, \omega_N, \Phi_N)$ to be given by~\eqref{eq:PhiN} 
and $\omega_N = - \omega_{\std}$. 
Because integration of $- \omega_{\std} = -dx \wedge dy$ 
with respect to the standard
orientation takes negative values, the corresponding Duistermaat-Heckman
measure is \emph{negative} Lebesgue measure on the ray $[1,\infty)$
  and zero outside the ray.

  Similarly, a neighbourhood $U_S$ of the south pole can be identified via an orientation-preserving $S^1$-equivariant diffeomorphism
  with $\C$ with its standard orientation and the \emph{opposite}
  $S^1$-action:
  $$ \lambda \colon z \mapsto \lambda^{-1} z .$$
  The momentum map for this action is, up to a constant, 
  $ + \frac{1}{2} \|z\|^2$, which is already proper and 
  bounded below. To obtain the condition $\Phi_S(S) = \Phi(S)=h(S) = -1$ 
  we define 
  \[
  \Phi_S(z) = -1 + \half \|z\|^2.
  \]
  Here we take the standard symplectic form (and not its negative),
  so the contribution from the south pole is {\em positive} Lebesgue
  measure on the ray $[-1,\infty)$ and zero outside.

Hence we get the decomposition
of the Duistermaat-Heckman measure of $S^2$ as illustrated in the
following Figure~\ref{fig:DHforS2-v-const}. This corresponds to the 
Lawrence-Varchenko polytope decomposition of the interval $[-1,1]$.

\begin{figure}[h]
\psfrag{=}{$=$} \psfrag{-}{$-$} \psfrag{+}{$+$}
\begin{center}
\includegraphics[height=4cm]{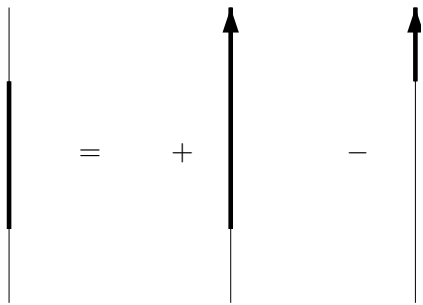}
\end{center}
\begin{center}
  \parbox{4.25in}{ \caption{By choosing $v \equiv 1$ constant and
      applying the localization formula, we obtain the Lebesgue measure
      on the interval $[-1,1]$ as the difference of Lebesgue measures
      on two rays.
      This corresponds to a Lawrence-Varchenko decomposition.
    }\label{fig:DHforS2-v-const}
}
\end{center}
\end{figure}

\subsection{Example: the norm-square of the momentum map}

We now consider the case corresponding to
Example~\ref{example:norm-square}, i.e., 
where \(v = \widehat{\Phi} = h.\) 
In this case, the zero set $Z := \{x \in S^2: v^{\sharp}_x = 0\}$
of $v^\sharp$ is \(\{N\} \cup \{S\} \cup \{h=0\},\) so we have an
additional component of $Z$ corresponding to the equator in~$S^2$.

We begin our computations with the north pole.  As in the previous
example, we must construct a $v$-polarized completion $(\Phi_N,
\omega_N)$ of $(\Phi|_{U_N}, \omega|_{U_N})$ on $U_N$.  We may assume
that $U_N$ is contained in the upper quarter of the sphere,
$\{ 1/2 < h \leq 1 \}$, so $\Phi_N^v$ is
between $\half \Phi_N$ and $\Phi_N$. 
So $\Phi_N^v$ is proper and bounded from below 
if and only if $\Phi_N$ is proper and bounded from below. 
The same
analysis as in the previous example applies and we take
  \[
  \Phi_N(z) = 1+ \half \|z\|^2
  \]
  with $U_N \cong \C$ equipped with the negative of the standard
  symplectic form. Hence the 
  contribution from the north pole is the negative Lebesgue measure on
  $[1,\infty)$, as in the previous example.

  In the case of the south pole, however, the analysis is different
  from the previous example since we now have $v \approx -1 < 0$ 
  near the south pole. We may assume that $U_S$ is contained in the
  bottom quarter of the sphere, $\{ -1 \leq h \leq -1/2 \}$.
  Thus, in order to satisfy the $v$-polarization condition,
  we must construct $\Phi_S$ such that its {\em negative} $- \Phi_S$
  is proper and bounded from below. A similar analysis as in the previous
  case then shows that we may take the negative of the standard
  symplectic form on $ \C$ and momentum map
  \[
  \Phi_S(z) = -1 - \frac{1}{2} \|z\|^2,
  \]
  with contribution negative Lebesgue measure on the ray $(-\infty, -1]$. 
 
  We now consider the contribution from the equator.  
  The neighbourhood $\{ -\frac14 < h < \frac14 \}$ of the equator in $S^2$
  is (non-symplectically)
  equivariantly diffeomorphic to the
  cylinder \(S^1 \times \R,\) with coordinates \((\theta, s).\) The
  action of $S^1$ is by standard multiplication on the left component
  of \(S^1 \times \R.\) By Hamilton's equation~\eqref{eq:Hamiltons},
  and using the standard orientation of the cylinder given by the
  symplectic form \(\omega_E = d\theta \wedge ds,\) the momentum map
  $\Phi_E$ is given by \(\Phi_E(\theta,s) = s,\) i.e., projection onto
  the second factor. This momentum map satisfies $\Phi_E |_{s=0} = 0$,
  so it agrees with the height function at the equator, as
  required. 
  Moreover, 
  when $s >\!\!> 0 $ we have $v \approx \frac14$, 
  and when $s <\!\!< 0 $ we have $v \approx -\frac14$.
  So we have
  \(\Phi_E^v(\theta,s)
  \approx \frac14 s \) for \(s >\!\!> 0\) and \(\Phi_E^v(\theta,s) \approx
  (-\frac14) s \) for \(s <\!\!< 0.\) 
  So $\Phi_E^v(\theta,s) \approx \frac14 |s|$ for $|s| >\!\!>0$,
  and hence $\Phi_E$ is also $v$-polarized.  The
  orientation of $\omega_E$ is the same as the orientation induced
  from the standard orientation of $S^2$ restricted to $U_E$, so this
  term will appear with {\em no} sign change. Hence the contribution
  from the equator is positive Lebesgue measure on all of $\R$.

  In summary, we get that the Duistermaat-Heckman measure for the
  $S^1$-action on $S^2$ may be written in terms of these three
  contributions, as given in Figure~\ref{fig:DHforS2-v-Phi}. This is
  the decomposition corresponding to the localization via the
  norm-square of the momentum map.

\begin{figure}[h]
\psfrag{=}{$=$} \psfrag{-}{$-$} \psfrag{+}{$+$}
\begin{center}
\includegraphics[height=4cm]{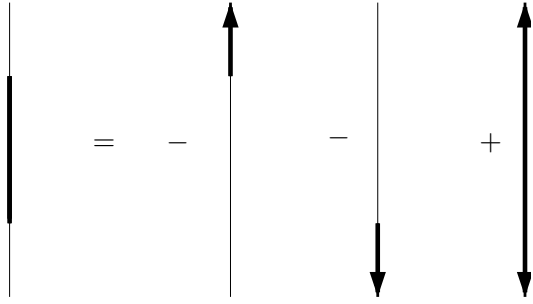}
\end{center}
\begin{center}
\parbox{4.25in}{
\caption{By choosing $v = \hat{\Phi} = h$, 
  we obtain the Lebesgue measure on the interval $[-1,1]$ 
  as the sum of three contributions as indicated. 
  This corresponds to localization via the norm-square of the momentum map.
}\label{fig:DHforS2-v-Phi}
}
\end{center}
\end{figure}

\subsection{Example: the negative of the norm-square of the momentum map} 

Finally, we consider the example where we pick \(v = -\widehat{\Phi} =
-h.\) Since $S^2$ is compact, $\Phi^v = -h^2$ is bounded below so
$\Phi=h$ is $v$-polarized on 
$S^2$. We have \(Z = \{N\} \cup \{S\} \cup \{h=0\}\) as in the
previous case.  The analysis of the components of an equivariant
tubular neighbourhood $U_Z = U_N \cup U_S \cup U_E$ is exactly
analogous to the previous case and we do not go through the details
here, and only note that the choices of direction will differ
because of the sign change in $v$.

This choice of $v$ yields the decomposition of the
Duistermaat-Heckman measure
as illustrated in Figure~\ref{fig:DHforS2-v-negPhi}. 
This is the Brianchon-Gram decomposition.

\begin{figure}[h]
\psfrag{=}{$=$} \psfrag{-}{$-$} \psfrag{+}{$+$}
\begin{center}
\includegraphics[height=4cm]{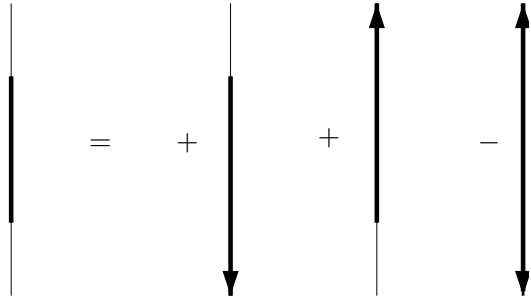}
\end{center}
\begin{center}
\parbox{4.25in}{
\caption{By choosing $v =  - \hat{\Phi} = -h$, we obtain
  the Lebesgue measure on the interval as the sum of three
  contributions as indicated. This corresponds to the Brianchon-Gram
  decomposition. 
}\label{fig:DHforS2-v-negPhi}
}
\end{center}
\end{figure}

\appendix

\section{Construction on a simple polytope of a smooth function 
with a unique critical point on the relative interior of each face.}
\label{sec:appendix}

In this appendix we prove the technical
Proposition~\ref{prop:bump-function new}, which asserts the existence
of a smooth function on a simple compact polytope $\Delta$ with
certain prescribed properties, the most important of which is that it
has a unique critical point on the relative interior of every face.
The existence of a function with such critical points is intuitively
quite clear but, firstly, we could not find a reference, and secondly,
it is surprisingly difficult to prove rigorously. Our approach is to
use brute-force differential topology on $\R^n$.  In fact, our
explicit construction yields a function which, near a critical point
in a face $F$, is linear in coordinates transverse to the face, and
quadratic in coordinates along the face. These specific properties of
our construction are used in Section~\ref{sec:BrianchonGram}.
Moreover, although we do not explicitly use these properties in this
manuscript, we can also specify in advance the location and the
function value of each of the critical points in the
faces of $\Delta$.

We begin with some terminology and notation. 
By a \textbf{polytope} $\Delta$ we mean the
convex hull of a finite set of points in a vector space (or in an affine space).
In particular, our polytopes are always compact.
A \textbf{face} $F$ of $\Delta$ is its intersection with a supporting
hyperplane: $F = \Delta \cap \{ L = \lambda \}$
where $L$ is a linear functional and $L|_\Delta \geq \lambda$.
The \textbf{dimension} of a polytope is the dimension of its affine span.
Faces of a polytope are themselves polytopes. \textbf{Facets}
are faces of codimension one.  Every face is an intersection of facets.
A polytope is \textbf{simple} if every face of codimension $k$
is contained in no more than $k$ facets.
Given a convex subset $X$ of $\R^n$, 
we denote by $\relint(X)$ the relative interior of $X$, i.e., 
the interior of $X$ in the affine span of $X$. 
Finally, we
recall that a function $f$ defined on an arbitrary subset of $X$ of
$\R^n$ (e.g.\ on a polytope) is said to
be \textbf{smooth} if, near each point \(x \in X,\) there exists a
smooth extension $f_U$ of $f$ to an open neighbourhood $U$ of $x$ such
that $f_U \vert_{U \cap X} = f \vert_{U \cap X}$. In the case that $X$
is closed as a subset of $\R^n$, it then follows that $f$ extends to a
global smooth function on all of $\R^n$. 

The following technical proposition records the results of our
explicit construction:

\begin{Proposition}\label{prop:bump-function new}
Let $\Delta \subset \R^n$ be an $n$-dimensional simple polytope. 
Then there exists a smooth function $f \colon \Delta \to \R$ 
with the following properties.
\begin{enumerate}
\item[(A)] For every face $F$ of $\Delta$,
the minimal value of $f$ on $F$ is attained
at exactly one point in the relative interior of $F$,
and the restriction of $f$ to the relative interior of $F$
has no other critical points.
\item[(B)] Suppose that we are given 
the data $\{(x_F, \alpha_F): F \textup{ a face in } \Delta\}$, 
where for each face $F$ we have $x_F \in \relint(F)$
and $\alpha_F \in \R_{\geq 0}$, and where $\alpha_F < \alpha_{F'}$ whenever
$F'$ is a proper subface of $F$. Then the function $f$ may be
chosen such that, for each face $F$ of $\Delta$, the restriction $f
\vert_{\relint(F)}$ attains its minimum at the chosen point $x_F$, and
the minimum value is the chosen $\alpha_F$, i.e. \(f(x_F) = \alpha_F.\) 
\item[(C)] Let $F$ be an $\ell$-dimensional face of $\Delta$. 
Near the above given point $x_F \in F$, there exist $\veps > 0$, 
a neighbourhood $U_{x_F}$ of $x_F$ in $F$, and affine local coordinates
$(x_1,\ldots, x_\ell, y_1, \ldots, y_{n-\ell}) \in (-\veps, \veps)^n$ 
on $U_{x_F}$ such that 
\begin{enumerate} 
\item 
the neighbourhood $U_F$ is given by
$\{(x_1, \ldots, x_\ell, y_1,\ldots, y_{n-\ell}) 
      \in (-\veps, \veps)^\ell \times [0,\veps)^{n-\ell}\}$ 
and the point $x_F$ is given by the origin 
$(0,0) \in (-\veps, \veps)^\ell \times [0,\veps)^{n-\ell}$;
\item 
the function $f$ can be chosen such that with respect to these coordinates, 
$f \vert_{U_{x_F}}$ is of the form 
\[
f(x_1, \ldots, x_\ell, y_1, \ldots, y_{n-\ell}) = \sum_{j=1}^\ell
x_j^2 - \sum_{j=1}^{n-\ell} y_j
\]
up to an affine translation in $\R$ (i.e. up to a multiplication and
translation by constants).
\end{enumerate}
\end{enumerate}
\end{Proposition}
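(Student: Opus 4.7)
My plan is to prove the proposition by induction on the dimension $n$ of $\Delta$, combined with a secondary induction on the dimension of faces of $\Delta$ within each stage. The base case $n=0$ is immediate, as $\Delta$ is then a single point and we set $f \equiv \alpha_\Delta$. For the inductive step, I propose constructing $f$ stage by stage on the $k$-skeleton $\Delta^k = \bigcup_{\dim F \leq k} F$ of $\Delta$ for $k = 0, 1, \ldots, n$, arranging that at each stage the function satisfies (A), (B), and the restricted local form of (C) on the skeleton built so far. The main work is the extension from $\partial F$ (already constructed in the previous stage) to $\relint F$ for each $k$-face $F$.

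For each such extension, I would proceed as follows. Near $x_F$, define $f$ by the prescribed local form of (C), which is possible because $\Delta$ is simple: in suitable affine coordinates centered at $x_F$, the polytope looks like $\R^\ell \times [0,\infty)^{n-\ell}$ with $F$ given by $\{y=0\}$, and the transverse $y$-coordinates can be chosen to align with the normals of the facets of $\Delta$ containing $F$. Near each $x_G$ with $G \subsetneq F$, keep the local form that was set in an earlier stage. In the remaining region of $F$, interpolate these local models by a smooth partition of unity, and then add a radial correction term of the form $M \cdot \|p - x_F\|^2$ supported in the interpolation region (vanishing near every $x_G$ so as not to disturb the prescribed local forms). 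Taking $M$ sufficiently large forces the total function to be strictly increasing along rays from $x_F$ throughout the interpolation region, so the only critical point of $f|_{\relint F}$ is $x_F$, with the prescribed value $\alpha_F$.

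The main obstacle is verifying rigorously that no spurious critical points are introduced and that the local form (C) is preserved at every $x_F$. The crux is a gradient estimate in the interpolation region: on one hand, hypothesis (B) forces the boundary values on $\partial F$ to be uniformly larger than $\alpha_F$, so $f$ must increase from $x_F$ toward $\partial F$; on the other hand, the radial term $M\|p - x_F\|^2$ supplies a controllable radially outward gradient contribution that, for $M$ large enough, dominates the gradient contributions produced by the partition of unity. The explicit verification that a single choice of $M$ works uniformly across all faces, and that the local models near different critical points $x_G$ glue together consistently with the simple-polytope coordinate structure, is exactly the \emph{brute-force differential topology on $\R^n$} alluded to in the excerpt: it involves careful but conceptually routine estimates rather than any new idea. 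Once $f$ is constructed on all of $\Delta = \Delta^n$, a standard smooth extension of $f$ to a neighbourhood in $\R^n$ yields the required function satisfying (A), (B), and (C) globally.
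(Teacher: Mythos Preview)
Your overall architecture---skeleton induction, prescribing the local model of (C) near each $x_F$, and then interpolating---is exactly the paper's strategy. The gap is in your mechanism for killing spurious critical points on $\relint F$. You propose adding a bump $M\|p-x_F\|^2$ supported away from every previously built $x_G$. But such a term must be cut off near $\partial F$; in the cutoff zone the radial derivative of $\chi(r)\,M r^2$ picks up a term $\chi'(r) M r^2$ with $\chi'<0$, which for large $M$ is \emph{very negative}, not positive. So ``large $M$'' does not force radial monotonicity throughout the interpolation region; on the contrary it can create new critical points exactly where the bump is switched off. Nothing in your sketch controls $f$ in that annular region near $\partial F$, and the partition-of-unity interpolation alone certainly can produce critical points there.

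What the paper carries through the induction, and what your proposal is missing, is a transverse derivative condition: along each relative facet of $F$ there is an inward-pointing vector field $\xi_j$ (tangent to $F$, transverse to that relative facet), and the inductive hypothesis requires $df_{\ell-1}(\xi_j)<0$ there. From these the paper builds a single vector field $\eta^F$ on $F\smallsetminus\{x_F\}$ that equals $-\partial/\partial r$ near $x_F$, agrees with combinations of the $\xi_j$ near $\partial F$, and along which $f_{\ell-1}$ is already strictly decreasing near $\partial F$. Flowing by $-\eta^F$ gives a diffeomorphism of a round ball onto (essentially) $F$ that straightens $\eta^F$ to $-\partial/\partial r$. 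In these coordinates $f_{\ell-1}$ is radially increasing near the boundary sphere, and the quadratic $\zeta(x)=\alpha_F+c\|x\|^2$ is radially increasing and strictly smaller than $f_{\ell-1}$ on the overlap; a convex combination with a \emph{radially monotone} cutoff then has positive radial derivative everywhere off the center, so the only critical point is $x_F$. Finally the same inward derivative condition is used again to extend $f$ off $F$ linearly in the $\xi_j$-directions, which both produces the $-\sum y_j$ part of the normal form (C) and re-establishes the transverse condition for the next stage. Without an analogue of this inward-derivative hypothesis your induction has no leverage near $\partial F$, and the large-$M$ trick cannot substitute for it.
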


In order to prove Proposition~\ref{prop:bump-function new}, we 
construct the required function $f$ by a recursive procedure. Before
delving into technicalities, we first sketch our method. Recall that
the \textbf{$\ell$-skeleton} of a polytope $\Delta$ is the union of
its $\ell$-dimensional faces. We begin the inductive argument by
constructing an appropriate function $f_0$ on a neighbourhood in $\R^n$
of the $0$-skeleton, i.e., of the vertices of $\Delta$. Such a function will
automatically have a unique critical point
on the relative interior of each vertex, because 
these relative interiors are just single points. Continuing by
induction, suppose that we are given a function $f_{\ell-1}$, satisfying
appropriate technical conditions (to be specified below) on a
neighbourhood in $\R^n$ of the $(\ell-1)$-skeleton. After possibly
shrinking this neighbourhood, we show that there exists a function
$f_\ell$ on a neighbourhood of the $\ell$-skeleton extending
$f_{\ell-1}$ and satisfying similar technical conditions. Continuing in
this manner, at the final step we then obtain a function $f := f_n$,
defined on all of $\Delta$ and that has the desired properties. The
concrete implementation of this plan occupies the rest of this
appendix.

At each 
inductive step the functions $f_\ell$ that we construct are required
to satisfy conditions that are stated in terms of certain vector fields 
$\xi_i$ that are defined along the
facets of $\Delta$, point ``into'' the interior of $\Delta$, and
are tangent to lower-dimensional faces. We therefore begin with the
construction of these vector fields, for which we need some notation. 
Let $\Delta$ be a simple polytope in $\R^n$ with $N$ facets. We may
express the polytope as an intersection of half-spaces, i.e. 
\(\Delta = \bigcap_{i=1}^N H_i \) where 
\begin{equation}\label{eq:Hi}
H_i = \{x \in \R^n \mid  \phi_i(x) \leq \lambda_i \},
\end{equation}
the $\phi_i$ are linear functionals on $\R^n$,
and the $\lambda_i$ are real numbers.
We always assume that $\Delta$ has non-empty interior in $\R^n$.
Let \(\{\sigma_1, \sigma_2, \ldots, \sigma_N\}\) 
be the facets of $\Delta$, 
i.e., $\sigma_i = \Delta \cap \partial H_i$.
For a subset $I \subseteq \{1, 2, \ldots, N\}$, let $F_I$ denote the (possibly empty) face of $\Delta$ 
obtained by intersecting the facets $\sigma_i$ for \(i \in I:\) 
\[
F_I := \bigcap_{i \in I} \sigma_i \subseteq \Delta.
\]
We define $F_{\emptyset} := \Delta$. 
If $F_I$ is nonempty, then since 
$\Delta$ is simple, $F_I$ is a face of codimension $|I|$. Moreover, if
\(J \subset I\) then \(F_I \subset F_J\). 

With this terminology in place we construct the vector fields $\xi_j$ 
along the facets which we use throughout our construction. 
Identifying $T\R^n|_{\sigma_j}$ with $\sigma_j \times \R^n$ in the
standard way, we think of 
these vector fields as functions 
$\xi_j \colon \sigma_j \to \R^n$.

\begin{Lemma}\label{lemma:xi}
Let $\Delta = \bigcap_{i=1}^N \{x \in \R^n \mid \phi_i(x) \leq \lambda_i
\}$ be an $n$-dimensional simple polytope in $\R^n$ and let
\(\{\sigma_i\}_{i=1}^N\) denote the facets of $\Delta$. 
Then there exist smooth vector fields
$\xi_j \colon \sigma_j \to \R^n$ along the facets such that
\begin{enumerate}
\item \(d \phi_j(\xi_j|_x) < 0 \) for all \(x \in \sigma_j\), and 
\item \(d \phi_i(\xi_j|_x) = 0\)  for all $x \in \sigma_j \cap \sigma_i$
      and $i \neq j$.
\end{enumerate}
In particular, let \(F = \sigma_{j_{1}} \cap \cdots \cap
\sigma_{j_{n-\ell}}\) and let 
\[
\pi = (\lambda_{j_1} - \phi_{j_1}, \ldots,
\lambda_{j_{n-\ell}} - \phi_{j_{n-\ell}}): \R^{n} \to \R^{n-\ell}.
\]
Then for all \(x \in F\) the vectors
\(\{\pi_{*}\left(\xi_{j_{1}}\vert_{x} \right), \ldots,
\pi_{*}\left(\xi_{j_{n-\ell}} \vert_{x} \right)\}\) are 
positive multiples of the standard basis elements of $\R^{n-\ell}$.

Moreover, having a priori chosen for each face $F$ a point $x_F$
in the relative interior of $F$, the $\xi_j$ can be chosen to be constant
on a neighbourhood of $x_F$ for each $F$.
\end{Lemma}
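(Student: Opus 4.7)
My plan is to construct each $\xi_j$ by a partition-of-unity argument, where the local building blocks are constant vector fields adapted to the faces of $\Delta$ contained in $\sigma_j$. For each face $F \subseteq \sigma_j$ (equivalently, $j \in I_F$, where $I_F := \{i : F \subseteq \sigma_i\}$), the simplicity of $\Delta$ implies that the linear functionals $\{\phi_i : i \in I_F\}$ are linearly independent, so I can pick a constant vector $w_F^{(j)} \in \R^n$ satisfying $d\phi_j(w_F^{(j)}) = -1$ and $d\phi_i(w_F^{(j)}) = 0$ for all $i \in I_F \ssminus \{j\}$.

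The key observation --- and what I expect to be the main conceptual step --- is that the right open cover of $\sigma_j$ to use is $\{\Omega_F^{(j)}\}_{F \subseteq \sigma_j}$, where
\[
\Omega_F^{(j)} := \{x \in \sigma_j : \phi_i(x) < \lambda_i \text{ for all } i \notin I_F\} = \{x \in \sigma_j : I(x) \subseteq I_F\},
\]
with $I(x) := \{i : x \in \sigma_i\}$. This is indeed an open cover: each $x \in \sigma_j$ lies in $\Omega_{F_0}^{(j)}$, where $F_0$ is the minimal face containing $x$ (note $j \in I(x) = I_{F_0}$ ensures $F_0 \subseteq \sigma_j$). I then take a smooth subordinate partition of unity $\{\rho_F^{(j)}\}$ on $\sigma_j$, for instance the explicit one $\rho_F^{(j)} = \delta_F / \sum_{F'} \delta_{F'}$ with $\delta_F(x) := \prod_{i \notin I_F}(\lambda_i - \phi_i(x))$; the denominator is positive on all of $\sigma_j$ since $\delta_{F_0}(x) > 0$ at any $x \in \sigma_j$.

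Setting $\xi_j := \sum_F \rho_F^{(j)} w_F^{(j)}$, Property~(1) is immediate from $d\phi_j(\xi_j) = -\sum_F \rho_F^{(j)} = -1$. For Property~(2), at any $x \in \sigma_j \cap \sigma_i$ with $i \neq j$, only those $F$ with $x \in \Omega_F^{(j)}$ contribute to the sum, and $x \in \Omega_F^{(j)}$ forces $I(x) \subseteq I_F$, hence $i \in I_F \ssminus \{j\}$, whence $d\phi_i(w_F^{(j)}) = 0$. The ``in particular'' statement then drops out by computing $\pi_*(\xi_{j_k}|_x) = (-d\phi_{j_1}(\xi_{j_k}|_x), \ldots, -d\phi_{j_{n-\ell}}(\xi_{j_k}|_x))$ componentwise and applying~(1) and~(2).

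Finally, to arrange that $\xi_j$ be constant near each prescribed point $x_F$ with $j \in I_F$, I observe that the single vector $\xi_j(x_F) \in \R^n$ continues to satisfy the defining linear conditions at every nearby point $x \in \sigma_j$: since $x_F \in \relint(F)$, nearby points lie in faces $F' \supseteq F$ and hence have $I(x) \subseteq I_F$, while $\xi_j(x_F)$ satisfies $d\phi_i(\cdot) = 0$ for all $i \in I_F \ssminus \{j\} \supseteq I(x) \ssminus \{j\}$. Using a smooth bump function $\chi_F$ equal to $1$ on a small neighborhood of $x_F$ and supported in a slightly larger one (shrunk so that all such supports are pairwise disjoint across the finitely many chosen points), I replace $\xi_j$ there by the convex combination $\chi_F\, \xi_j(x_F) + (1 - \chi_F)\, \xi_j$. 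This preserves Properties~(1) and (2), because the set of valid vectors at each point is a convex affine slice of $\R^n$, and makes $\xi_j$ identically equal to $\xi_j(x_F)$ in the inner neighborhood. The remainder of the verification is routine linear algebra and local finiteness.
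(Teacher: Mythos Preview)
Your proof is correct and follows essentially the same approach as the paper: both construct local constant vectors using simplicity (your $w_F^{(j)}$ is exactly the paper's $\xi_j^{(x)} = \partial/\partial t_{\ell+j}$ in suitable affine coordinates) and then patch via a partition of unity on $\sigma_j$. Your version is organized slightly differently---indexing by faces $F \subseteq \sigma_j$ with the explicit partition of unity $\rho_F^{(j)} = \delta_F / \sum_{F'} \delta_{F'}$, rather than by a finite point-cover obtained from compactness---and your constancy-near-$x_F$ step is a post-hoc convex-combination modification rather than the paper's device of including $x_F$ among the cover centers, but these are cosmetic variations of the same argument.
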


Condition (1) in Lemma~\ref{lemma:xi} means that $\xi_j|_x$ is transverse
to $\sigma_j$ and points into $\Delta$.  Condition (2) implies that,
for any face $F_I$ and for the indices $j$ that correspond 
to relative facets $F_I \cap \sigma_j$, 
the restrictions of the vector fields $\xi_j$
to $F_I \cap \sigma_j$
are tangent to $F_I$
and point into $F_I$.

\begin{proof} 
  Let \(x \in \Delta \ssminus \relint(\Delta).\) Let \(j_{1}, j_{2}, \ldots, j_{n-\ell} \in
  \{1, 2, \ldots, N\}\) be the indices of all the facets that pass
  through $x$. The linear functionals \(\phi_{j_{1}}, \ldots,
  \phi_{j_{n-\ell}}\) are linearly independent in $(\R^n)^*$ since the
  polytope is simple. Let \(t_{1}, \ldots, t_{\ell}\) be linear
  functionals such that \(t_{1}, \ldots, t_{\ell}, \phi_{j_{1}},
  \ldots, \phi_{j_{n-\ell}}\) is a basis of $(\R^n)^*$. Let \(t_{\ell+1} :=
  \lambda_{j_{1}} - \phi_{j_{1}}, \ldots, t_{n} := \lambda_{j_{n-\ell}}
  - \phi_{j_{n-\ell}}.\) Then there exists a neighbourhood $U_x$ of $x$
  in $\Delta$ such that \(t_{1}, \ldots, t_{n} \colon \R^{n} \to
  \R^{n}\) carries $U_{x}$ onto a
  neighbourhood of $0$ in $\R^{\ell} \times \R^{n-\ell}_{\geq 0}$.
  Note in particular that for every point $y$ of $U_x$, the indices 
\(j_{1}, j_{2}, \ldots, j_{n-\ell} \in
  \{1, 2, \ldots, N\}\) are precisely the indices of all the facets that pass
  through $y$.

The vectors \(\xi_{i}^{(x)} := \frac{\partial}{\partial t_{\ell+i}}\)
satisfy the required properties at all the points of $U_{x}$.

Now fix \(j \in \{1, 2, \ldots, N\}\) and consider $\sigma_j$. The
open sets $\{U_x \cap \sigma_j \}_{x \in \sigma_j}$ form an open cover of
$\sigma_j$. Since $\Delta$ is compact, so is $\sigma_j$, so we may
choose a finite subcover $\{U_{x_s} \cap \sigma_j\}_{s=1}^{N_j}$ for some 
$N_j \in \N$. Without loss of generality we may assume that for each
$F$ contained in $\sigma_j$, the point $x_F$ appears among the
$\{x_s\}_{s=1}^{N_j}$. We may also assume that for each $x_F$, there is a
sufficiently small neighborhood $V_F \subseteq U_{x_F}$ such that
$U_{x_s} \cap V_F = \emptyset$ for all $x_s \neq x_F$. 
Let \(\rho_{s} \colon \sigma_j \to \R, 1 \leq s \leq N_j,\) be 
a smooth partition of unity 
with $\supp \rho_{s} \subseteq U_{x_s} \cap \sigma_{j}$. 
Define 
\[
\xi_{j} := \sum_{s} \rho_{s} \xi_{j}^{(x_{s})}.
\]
Then the vector fields \(\xi_{1}, \ldots, \xi_{N}\) satisfy the
required properties. 
\end{proof}

Using the above vector fields $\xi_j$, we may now state
the recursive conditions on the functions $f_{\ell}$
in our construction:

\begin{enumerate}
\item[(f1)]
$f_{\ell}$ is a smooth function defined on an open neighbourhood $U_{\ell}$ 
in $\Delta$ of the $\ell$-skeleton;
\item[(f2)]
for each face $F$ of dimension \( \leq \ell\),
the restriction of $f_{\ell}$ to $\relint(F)$ 
attains its minimum at the point $x_F$, 
has no critical points other than $x_F$,
and $f_{\ell}(x_F) = \alpha_F$; and 
\item[(f3)] \label{p3} for each face $F$ of dimension $\leq \ell$, we have
  \(df_{\ell}(\xi_j\vert_x) < 0\) for all $j$ with $\sigma_j \cap F
  \neq \emptyset$ and for all \(x \in \sigma_j \cap F \).
\end{enumerate}
(Note that $\sigma_j \cap F$ can be either $F$ itself
or a relative facet of $F$.)

We now begin the recursive construction of the functions $f_\ell$. 
The base case requires us to construct a function 
$f_0$ near the $0$-skeleton satisfying (f1)-(f3) above.  
Let $x$ be a vertex of $\Delta$. 
Since $\Delta$ is
simple, there exists an open neighbourhood $U_x \subset \R^n$ of $x$
and an element $A$ of $\AGL(n,\R)$ (=affine automorphisms of $\R^n$)
such that the map $A$ takes $x$ to $0$ and takes the intersection 
$U_x \cap \Delta$ to a neighbourhood of $0$ in the positive orthant 
\(\R^n_{\geq 0} = \{v=(v_1, \ldots, v_n) \in \R^n: 
   v_i \geq 0 \textup{ for all } 1 \leq i \leq n\}.\) 
We may take $f_0|_{U_x}$ to be the composition of the affine
transformation $A$ with the function \(v \mapsto \alpha_F - \sum_i v_i.\)
The last condition in Lemma~\ref{lemma:xi} implies that for each 
facet $\sigma_j$ that contains the vertex $x$ of $\Delta$ 
the vector field $\xi_i$, near $x$, is a positive multiple of
$\frac{\partial}{\partial v_i}$ in the
above coordinates for $\R^n_{\geq 0}$. Hence the function $f_0$ near
$x$ satisfies the condition (f3) above. By our formula for $f_0|_{U_x}$,
the condition (f2) also holds at this vertex. 
Repeating this for all vertices (and possibly shrinking
the open neighbourhoods so that their closures are disjoint), 
we obtain a function that 
satisfies the above conditions (f1)-(f3) with $\ell = 0$. 
This completes the base case of the induction.

We now proceed with the recursive step. Let $\ell \geq 1$, and 
assume that we have already defined a function $f_{\ell-1}$ on a
neighbourhood of the $(\ell-1)$-skeleton that satisfies 
the conditions (f1)-(f3). 
We now construct a function $f_{\ell}$ which (after possibly shrinking
the 
neighbourhood on which $f_{\ell-1}$ is defined) extends $f_{\ell-1}$ near each $\ell$-dimensional face $F$
separately. In fact, we will first construct $f_\ell$ in the
$\ell$-dimensional affine
span of $F$ and then extend to a neighbourhood of $F$ in $\R^n$. 
Fix an $\ell$-dimensional face $F$ of $\Delta$. By using an affine
change of coordinates, we may assume 
without loss of generality that 
\begin{itemize} 
\item the affine span of $F$ is $\R^{\ell}$, identified with the subspace 
$\R^{\ell} \times \{ 0 \}^{n-\ell}$ of $\R^n$;
\item the chosen point $x_F \in \relint F$ is the origin $0$; and 
\item in a neighbourhood of $0$ the polytope $\Delta$ coincides 
with $\R^{\ell} \times \R^{n-\ell}_{\geq 0}$. 
\end{itemize} 
Furthermore, after permuting the indices if necessary,
we may without loss of generality assume that 
$F = \sigma_1 \cap \ldots \cap \sigma_{n-\ell}$,
so that the vector fields \( \xi_1, \ldots, \xi_{n-\ell} \) 
are defined along $F$. 

The set of indices
\[
{\mathcal K} := \{k \, \vert \, \sigma_k \cap F \neq \emptyset, \,
n-\ell < k \leq N\}
\]
parametrizes the set of relative facets of $F$. 
We have on each relative facet $\sigma_k \cap F$
a vector field $\xi_k$ along which $f_{\ell-1}$ decreases.
In Lemma~\ref{lemma:def-etaF} below we construct a single vector field
$\eta^F$ on a neighbourhood $V_F \ssminus \{0\}$
of $F \ssminus \{0\}$ in $\R^\ell$ which interpolates between $-\ddr$ and
the vector fields $\xi_j$ constructed in Lemma~\ref{lemma:xi}, and
along which  $f_{\ell-1}$ decreases where it is defined. 
The flow along $\eta^F$ 
allows us to reparametrize the relative interior of $F$, which will
in turn allow us to explicitly construct the extension $f_{\ell}$ of
$f_{\ell-1}$ near $F$.

Let $r$ denote the radial coordinate in $\R^{\ell}$ 
and let \(-\ddr\) denote the
corresponding inward-pointing radial vector field, defined and smooth
on $\R^{\ell} \ssminus \{0\}$. 
Let \(k \in {\mathcal K}\). 
Denote by $L_k$ the restriction to the affine subspace $\R^{\ell}$
of the linear functional $\frac{1}{\lambda_k}\phi_k$ 
in the notation of~\eqref{eq:Hi}. 
Then the face $F$ as a subset of $\R^\ell$ is given by the intersection
\[
F = \bigcap\limits_{k \in {\mathcal K}} \left\{ x \in \R^{\ell} \, \mid \,
L_k(x) \leq 1 \right\} \subseteq \R^{\ell}.
\]
Given \(\delta > 0,\) we may also define a smaller polytope
$F_{\delta} \subseteq F$ by 
\[
F_{\delta} := \bigcap\limits_{k \in {\mathcal K}} \left\{ x \in \R^{\ell} \,
\mid \, L_k(x) \leq 1 - \delta \right\} \subseteq \R^{\ell}.
\]
We always assume that $\delta$ is sufficiently small such that
$F_{\delta}$ has the same combinatorial type as $F$ 
(this is possible because $\Delta$ is simple)
and such that $ 0 \in \relint F_{\delta}. $

\begin{Lemma}\label{lemma:def-etaF}
There exists an open subset $V_F$ of $\R^\ell$ containing $F$,
a smooth vector field $\eta^F$ on 
\(V_F \ssminus \{0\}\),
and a constant $\delta$, $0 < \delta < 1$, such that
\begin{enumerate} 
\item for all \(k \in {\mathcal K}, \) 
\begin{equation}\label{eq:etaF-delta-ineq}
dL_k(\eta^F)  < - \delta < 0 \, \textup{ \ on \ }  \, 
         L_k^{-1}([1-\delta,1]) \cap V_F ;
\end{equation}
\item \(\eta^F = - \ddr\) on \(F_\delta \ssminus \{ 0 \} \); and 
\item $df_{\ell-1}(\eta^F) < 0$ on $\del F$.
\end{enumerate}
\end{Lemma}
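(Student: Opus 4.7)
The plan is to construct $\eta^F$ as a partition-of-unity interpolation between the inward radial field $-\frac{\partial}{\partial r}$ on the deep interior $F_\delta$ and smooth extensions of the vector fields $\xi_k$ along the relative facets $\sigma_k \cap F$. First, since $F = \sigma_1 \cap \cdots \cap \sigma_{n-\ell}$, Lemma~\ref{lemma:xi}(2) gives $d\phi_i(\xi_k) \equiv 0$ on $\sigma_k \cap F$ for $i = 1,\ldots,n-\ell$, so $\xi_k|_{\sigma_k \cap F}$ annihilates the defining functionals of the affine span $\R^\ell$ of $F$ in $\R^n$ and is therefore tangent to $\R^\ell$. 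I extend this $\R^\ell$-valued vector field smoothly to a vector field $\tilde\xi_k$ on an open neighbourhood $W_k$ of $\sigma_k \cap F$ in $\R^\ell$; by compactness of $\sigma_k \cap F$ and continuity, after shrinking $W_k$ I may assume $dL_k(\tilde\xi_k) \leq -c$ on $W_k$ for a uniform constant $c>0$, and also $df_{\ell-1}(\tilde\xi_k) < 0$ on $W_k$ (using the induction hypothesis (f3) applied to the $(\ell-1)$-face $\sigma_k \cap F$).

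Next I fix $\delta > 0$ small and choose $V_F$, a sufficiently small neighbourhood of $F$ in $\R^\ell$, so that: (a) each strip $\{L_k > 1 - \delta\} \cap V_F$ is contained in $W_k$; (b) for pairs $j \ne k$ with $\sigma_j \cap \sigma_k \cap F \ne \emptyset$, the corner region $\{L_k > 1-\delta\} \cap \{L_j > 1-\delta\} \cap V_F$ lies in a neighbourhood of $\sigma_j \cap \sigma_k$ small enough that $|dL_k(\tilde\xi_j)|$ there is uniformly much smaller than $c$ (possible by continuity and the key vanishing identity $dL_k(\xi_j) \equiv 0$ on $\sigma_j \cap \sigma_k \cap F$ from Lemma~\ref{lemma:xi}(2)); and (c) for pairs with $\sigma_j \cap \sigma_k \cap F = \emptyset$, the corresponding strips in $V_F$ are disjoint. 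I then select a smooth partition of unity $\{\rho_0\} \cup \{\rho_k\}_{k \in \mathcal K}$ on $V_F$ subordinate to the cover by $U_0 := \{L_j < 1-\delta \text{ for all } j\}$ and $U_k := \{L_k > 1-\delta\}$, arranged so that $\rho_0 \equiv 1$ on $F_\delta$ (which forces condition (2) for free) and so that the diagonal weight $\rho_k$ admits a uniform positive lower bound throughout an inner portion of the strip $\{L_k \geq 1-\delta\}$. Set
\[
\eta^F := \rho_0 \cdot \Bigl(-\frac{\partial}{\partial r}\Bigr) + \sum_{k \in \mathcal K} \rho_k\,\tilde\xi_k \qquad \text{on } V_F \setminus \{0\}.
\]

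Condition (3) is immediate: on $\partial F$ one has $\rho_0 \equiv 0$, so $\eta^F(x)$ is a convex combination of vectors $\tilde\xi_j(x)$, each with $df_{\ell-1}(\tilde\xi_j(x)) < 0$. The bulk of the argument is Condition (1): for $p$ with $L_k(p) \geq 1-\delta$ I decompose
\[
dL_k(\eta^F)(p) = -\rho_0(p)\,\frac{L_k(p)}{|p|} + \rho_k(p)\,dL_k(\tilde\xi_k)(p) + \sum_{j \ne k} \rho_j(p)\,dL_k(\tilde\xi_j)(p),
\]
whose three terms are respectively bounded above by $-\rho_0(1-\delta)/R$ (where $R$ bounds $|p|$ on $V_F$), by $-c\,\rho_k$, and (using (b) and (c)) by a quantity uniformly much smaller than $c$. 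A short case analysis---distinguishing whether $p$ lies in $F_\delta$ (where $\eta^F = -\partial/\partial r$ and the radial term dominates), in the interior of the strip far from other facets (where $\rho_k$ is close to $1$), or in a corner region (where the cross-term vanishing identity from Lemma~\ref{lemma:xi}(2) is invoked)---combined with a choice of $\delta$ smaller than both $c/(2|\mathcal K|)$ and $1/(R+1)$, yields $dL_k(\eta^F)(p) < -\delta$ in all cases. The principal obstacle is precisely this last step: coordinating the three regimes into a single smooth vector field while reconciling the requirement $\eta^F|_{F_\delta} = -\partial/\partial r$ from (2) with the strict inequality in (1), for which the essential ingredient is the vanishing identity $dL_k(\xi_j) \equiv 0$ on $\sigma_j \cap \sigma_k$ that makes the cross-terms near corners uniformly controllable.
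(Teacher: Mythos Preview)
Your approach is a valid alternative, but it differs from the paper's in its decomposition and leaves one technical point underspecified.

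\textbf{Comparison.} Both arguments interpolate between $-\partial/\partial r$ on the interior and $\xi$-type vectors near $\partial F$, and both ultimately rest on the vanishing identity $dL_k(\xi_j)=0$ on $\sigma_j\cap\sigma_k\cap F$ from Lemma~\ref{lemma:xi}(2). The paper, however, covers $\partial F$ by finitely many small neighbourhoods of \emph{points} $x\in\partial F$ and at each such $x$ uses the single constant vector $\eta_x=\sum_{j\in\calK_x}\xi_j|_x$. Because of the exact vanishing, this $\eta_x$ satisfies $dL_k(\eta_x)<0$ simultaneously for every $k\in\calK_x$, so a convex combination of the $\eta_{x_i}$ with $-\partial/\partial r$ immediately inherits the strict inequality---no cross-term estimate is needed. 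Your facet-by-facet scheme instead uses each $\tilde\xi_k$ separately, which forces you to bound the cross-terms $dL_k(\tilde\xi_j)$ as merely \emph{small} (rather than zero) near corners; this works but costs you the extra case analysis.

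\textbf{Gap to close.} As written, your open cover uses the single threshold $\delta$: $U_0=\{L_j<1-\delta\ \forall j\}$ and $U_k=\{L_k>1-\delta\}$. But then $\partial F_\delta\not\subset U_0$, so a function $\rho_0$ with support in $U_0$ cannot be $\equiv 1$ on $F_\delta$, contradicting your claim for condition~(2). Relatedly, at a corner point with $L_k(p)\approx 1-\delta$ and $L_j(p)\approx 1$ one can have $\rho_k(p)\approx 0$ and $\rho_j(p)\approx 1$, and then $dL_k(\eta^F)(p)\approx dL_k(\tilde\xi_j)(p)$ is only \emph{small}, not strictly below $-\delta$. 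Both issues are repaired by using nested thresholds (e.g.\ take $U_k=\{L_k>1-2\delta\}$ with $\chi_k\equiv 1$ on $\{L_k\geq 1-\delta\}$, and let the statement's $\delta$ be the inner threshold); this guarantees $\rho_k\geq 1/(|\calK|+1)$ on the strip $\{L_k\geq 1-\delta\}$ and makes your estimate go through. Your phrase ``inner portion of the strip'' hints at this, but the actual cover you wrote down does not implement it.
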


\begin{proof} 
The proof is by explicit construction. We first construct vector fields
locally which satisfy the conditions of the lemma, and then 
patch them together using a partition of unity. 

We begin with the interior of $F$. 
Consider the open set
\(U_0 := \relint(F) \ssminus \{0\}. \) The radial vector field
\(\eta_0 := - \ddr\) is certainly smooth on $U_0$. Moreover, since \(0
\in \relint(F)\) and $F$ is convex, $-\ddr$ is transverse to all
relative facets $F_k = \sigma_k \cap F$ of $F$. In particular, 
for each \(k \in {\mathcal K}\) we have \(dL_k(\eta_0) < 0\) on $F_k$. 
Since each $F_k$
is compact, there exists a neighbourhood $V_k$ of $F_k$ and \(\delta_k
> 0\) such that \(dL_k(\eta_0) < -\delta_k < 0\) on $V_k$. Let
\(\delta'_k  > 0\) be such that 
\(L_k^{-1}([1-\delta'_k, 1]) \cap U_{0}\subseteq V_k\) for each $k$, 
and set \(\delta_0 := \min \left( \cup_{k \in {\mathcal K}} 
     \{\delta_k, \delta'_k\} \right) \). 
Then, for all \(k \in {\mathcal K}\), we have
\begin{equation}\label{eq:eta0-Lk}
dL_k(\eta_0) < -\delta_0 \, \textup{ \ on \ } \, 
    L_k^{-1}([1-\delta_0,1]) \cap U_0.
\end{equation}
We conclude that the vector field $\eta_0$ on $U_0$ satisfies the conditions (1)
and (2) of the lemma. (Since $U_0$ does not intersect $\del F$, the
condition (3) is not relevant for this case.)

We now proceed to the local construction at the boundary. Let $x$ be
in the relative boundary $\del F$ of $F$ in $\R^{\ell}$. Let \({\mathcal K}_x
:= \{ k \in {\mathcal K} \, \mid \, x \in F_k\}.\) 
By Lemma~\ref{lemma:xi}(2), the vectors $\xi_j
\vert_x$ for $j \in {\mathcal K}_x$ lie in $\R^\ell$. Let 
\[
\eta_x := \sum_{j \in {\mathcal K}_x} \xi_j \vert_x.
\]
Observe that by the property (f3) of $f_{\ell-1}$ and the definition of the
$\eta_x$, 
\[
df_{\ell-1}(\eta_x) < 0.
\]
Moreover, by Lemma~\ref{lemma:xi}(1),
the derivative $dL_k(\eta_x)$ is negative for every $k \in \calK_x$,
since $dL_k(\xi_k) <0$ and $dL_k(\xi_j) = 0$ 
for all $j \in \calK_x \ssminus \{ k \}$.
Choose $\delta'_x$ such that \(0 < \delta'_x < \min\{ - dL_k(\eta_x)
\}_{k \in {\mathcal K}_x}.\) We now deal with the indices $s$ not in
${\mathcal K}_x$. By the definition of ${\mathcal K}_x$,
\(L_s(x) < 1\) for all \(s \in {\mathcal K} \ssminus {\mathcal
  K}_x.\) Choose $\delta''_x$ such that \( 0 < \delta''_x < \min \{1 -
L_s(x)\}_{s \in {\mathcal K} \ssminus {\mathcal K}_x}.\) Then \(x
\not \in L_s^{-1}([ 1-\delta''_x, 1])\) for all \(s \in {\mathcal K}
\ssminus {\mathcal K}_s.\) 
Let \(U'_x\) be a neighbourhood of $x$ in $\R^{\ell}$ such that 
$0 \not \in U'_x$, \ 
$U'_x \cap L_s^{-1}([1-\delta''_x, 1]) = \emptyset$ 
for all \(s \in {\mathcal K} \ssminus {\mathcal K}_x\), 
and \(d(f_{\ell-1})_y(\eta_x)(y) < 0\) at all points $y$ of $U'_x$. 
(Here $\eta_x$ is viewed as a vector field on $U_x$ with constant
coefficients.) 
Since the relative
boundary $\partial F$ of $F$ is compact, there exists a finite set $\{x_1, \ldots,
x_m\} \subseteq \partial F$ such that \(\partial F \subseteq U'_{x_1} \cup
U'_{x_2} \cup \cdots \cup U'_{x_m}\). 
Let 
\(\delta = \min\{ \delta'_{x_1}, \delta''_{x_1}, 
   \ldots, \delta'_{x_m}, \delta''_{x_m}\} \).
Define $V_F := U_0 \cup U'_{x_1} \cup \ldots \cup U'_{x_m}$.
Then, by construction, $V_F$ contains $F$, and the sets
$U_0$, $U'_{x_1} \ssminus (U'_{x_1} \cap F_\delta)$, $\ldots$, $U'_{x_m}
\ssminus (U'_{x_m} \cap F_\delta)$ 
form an open covering of $V_F$.  Let $\rho_0, \rho_1,\ldots, \rho_m$
be a partition of unity subordinate to this covering, and let
\begin{equation}\label{eq:def-rho}
\eta^F := \rho_0 \left(-\ddr\right) + \sum_{i=1}^m \rho_i \eta_{x_i}. 
\end{equation}
Then $\eta^F$, $V_F$, and \(\delta > 0\), as chosen above, 
satisfy the conditions of the lemma.
\end{proof}

We now wish to show that we may use the vector field $\eta^F$
constructed in Lemma~\ref{lemma:def-etaF} to reparametrize the
relative interior of $F$. Denote by $\psi(t,x)$ the flow along $-
\eta^F$, where $t$ is the time parameter of the flow and $x$ is the
initial condition.  For fixed initial condition $x$, by a
\emph{maximal trajectory} through $x$ we mean the flow $\psi(t,x)$ on
the maximal interval $(a,b) \subseteq \R$ on which the flow is
defined. 
As a first step, we wish to show that for any $x$
in $\del F_\delta$, this flow $\psi(t,x)$ takes $x$ 
to $\del F$ in finite time:

\begin{Lemma}\label{lemma:flow-into-Fdelta}
Let $\eta^F$ and $\delta$ be as in Lemma~\ref{lemma:def-etaF} and 
let $x$ be a point in $\del F_\delta$.
Let $X(t) := \psi(t,x)$ be the maximal trajectory 
of $-\eta^F$
with initial condition $\psi(0,x) = x$.
Then there exists $t_x>0$ such that 
$X(t) \in \relint F$ for all $0 \leq t < t_x$
and $X(t_x) \in \del F$. 
\end{Lemma}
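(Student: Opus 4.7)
The plan is to exhibit $t_x$ as the first moment the trajectory exits $\relint F$ and to use the bound~\eqref{eq:etaF-delta-ineq} on one of the coordinate functions $L_k$ to show that this exit happens by time $t = 1$. First I observe that $F_\delta$ is cut out of $F$ by the inequalities $L_k \leq 1 - \delta$ for $k \in \calK$, so every point of $F_\delta$ strictly satisfies $L_k < 1$ for all $k \in \calK$; hence $F_\delta \subseteq \relint F$, and in particular $x \in \del F_\delta \subseteq \relint F$. Let $[0, \tau)$ be the maximal interval of existence of $X$, and set
\[
t_x := \inf\{t \in [0, \tau) : X(t) \notin \relint F\},
\]
with the convention $\inf \emptyset := \tau$. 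By continuity of the flow and openness of $\relint F$, $t_x > 0$; and since $F$ is closed, once I verify $t_x < \tau$, continuity will force $X(t_x) \in F \setminus \relint F = \del F$, as required.

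To prove $t_x < \tau$ I exploit the fact that $x \in \del F_\delta$ implies $L_k(x) = 1 - \delta$ for some $k \in \calK$. Setting $\phi_k(t) := L_k(X(t))$, so that $\phi_k(0) = 1 - \delta$, I note that for $t \in [0, t_x)$ the trajectory satisfies $X(t) \in \relint F \subseteq V_F$ and $\phi_k(t) \leq 1$. Whenever additionally $\phi_k(t) \geq 1 - \delta$, the bound~\eqref{eq:etaF-delta-ineq} gives
\[
\phi_k'(t) = - \left. dL_k(\eta^F) \right|_{X(t)} > \delta.
\]
A routine bootstrap argument starting from $\phi_k(0) = 1 - \delta$ and $\phi_k'(0) > 0$ then shows that $\phi_k(t) \geq 1 - \delta$ throughout $[0, t_x)$, and hence $\phi_k(t) \geq 1 - \delta + \delta t$ for all $t \in [0, t_x)$.

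Finally, I rule out the possibility $t_x = \tau$. On $[0, t_x)$ the trajectory lies in the compact set $K := \{y \in F : L_k(y) \geq 1 - \delta\}$, which is contained in $V_F$ (since $F \subseteq V_F$) and disjoint from $0$ (since $L_k(0) = 0 < 1 - \delta$). So $X([0, t_x))$ remains in a compact subset of the domain $V_F \setminus \{0\}$ of $\eta^F$, and standard ODE theory then implies that the flow cannot escape or blow up; hence $t_x = \tau$ would force $\tau = \infty$, but then $\phi_k(t) \geq 1 - \delta + \delta t \to \infty$ contradicts $\phi_k \leq 1$. Thus $t_x < \tau$ (in fact $t_x \leq 1$), which finishes the proof. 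The main subtlety here is precisely this last bookkeeping step: one must ensure the trajectory does not leave $V_F \setminus \{0\}$ before crossing $\del F$, and this is handled automatically by the lower bound $\phi_k \geq 1 - \delta$, which confines $X(t)$ to the single compact set $K$.
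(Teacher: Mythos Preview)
Your proof is correct and follows essentially the same approach as the paper: both identify an index $k$ with $L_k(x)=1-\delta$, establish the linear lower bound $L_k(X(t))\geq 1-\delta+\delta t$ while the trajectory remains in $F$, and then derive a contradiction with $L_k\leq 1$ on $F$. The only organizational differences are that you define $t_x$ upfront as the first exit time (whereas the paper argues by contradiction without naming it) and you confine the trajectory to the compact set $K=\{y\in F: L_k(y)\geq 1-\delta\}$ to guarantee long-time existence, while the paper instead extends $\eta^F$ to a compactly supported vector field on $F\ssminus F_\delta$; these are equivalent bookkeeping choices.
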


\begin{proof}
Since $x \in \del F_\delta$, there exists
$k \in \calK$ such that $L_k(x) = 1 - \delta$. 
Fix one such $k$. Let $t > 0$. First we claim that 
\begin{quotation}
if $X(\tau)$ is defined 
and takes values in $F$ for all $\tau$ in $[0,t]$ then
\begin{equation}  \label{claim}
 L_k(X(t)) > 1 - \delta + \delta t .
\end{equation}
\end{quotation}
To prove the claim, recall that by the construction of $\eta^F$ and $\delta$,  if $X(t)$ is defined
and belongs to $F$, then
\begin{equation} \label{bound}
 \quad \textup{ if } \quad
  L_k(X(t)) \geq 1-\delta
 \quad \textup{ then } \quad
 \frac{d}{dt} L_k(X(t))  = (dL_k)_{X(t)}(-\eta^F) > \delta .
\end{equation}
Since $X(0)=x$ and $L_k(x)=1-\delta$, then by~\eqref{bound} we may
conclude that the derivative $\frac{d}{dt} L_k(X(t))$ is greater than
$\delta$ when $t=0$.  By continuity, this derivative is greater than
$\delta$ for $t$ in a neighbourhood of $0$.  Integrating, we conclude
that~\eqref{claim} is true if $t$ is positive and sufficiently
small. Observe also that by continuity of solutions of ODEs,
  for any $t>0$ for which~\eqref{claim} holds, there exists an
  open interval containing $t$ on which the trajectory is defined and
  for which~\eqref{claim} still holds; in particular, the set of $t$
  for which the claim holds is open. 
Now suppose that there exists a positive $t_0$ for which the claim
does not hold; 
then there exists a minimal such $t_0$ since the set of $t_0$ for which the
claim does not hold is closed. 
By minimality, if $0 < t < t_0$
then $L_k(X(t)) > 1 - \delta + \delta t$.  By~\eqref{bound},
$L_k(X(t)) > 1 - \delta + \delta t$ and $t>0$ imply $\frac{d}{dt}
L_k(X(t)) > \delta$.  Integrating,
\begin{equation*}
\begin{split}
 L_k(X(t_0)) & = L_k(X(0))  + \int_{0}^{t_0} \frac{d}{dt} L_k(X(t)) \, dt 
 \ > \ L_k(X(0)) + \int_0^{t_0} \delta \, dt \\
 & = 1 - \delta + \delta t_0, \\
\end{split}
\end{equation*}
which contradicts the assumption on $t_0$.
This completes the proof that~\eqref{claim} holds for all positive $t$.

Now suppose that the assertion of the lemma is not true,
that is, $X(t) \in \relint F$ for all $t > 0$ where $X(t)$ is defined.
Then~\eqref{claim} implies that $X(t) \not\in F_\delta$ for all $t>0$ 
where $X(t)$ is defined.
This implies that $X(t)$ is defined for all $t \in 
[0,\infty)$,
because it coincides with the trajectory of a compactly
supported vector field that equals $\eta_F$
on $F \ssminus F_\delta$. 
Taking $t$ big enough so that $1 - \delta + \delta t$ is greater than one, 
\eqref{claim} contradicts the assumption that $X(t)$ is in $F$. This
proves the lemma.
\end{proof}

We may now use \(\psi(t,x)\) to reparametrize $F$. In order to do so
smoothly, we first need to construct a smooth manifold in $F$ which
approximates $\del F$ in $F$.  We will construct this smooth
approximation by taking a regular level set of a smooth function,
which we now construct:

\begin{Lemma} \label{hF}
There exists a smooth function $h^F$ on a neighbourhood of $\partial F$
in $F$ such that
\begin{itemize}
\item \(dh^F(\eta^F) < 0\) at all points of \( \relint(F) \) 
      where $h^F$ is defined, and 
\item \(h^F \vert_{\partial F} \equiv 1.\) 
\end{itemize}
\end{Lemma}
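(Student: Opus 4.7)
The plan is to write down an explicit formula for $h^F$ in terms of the linear functionals $L_k$ that cut out the relative facets of $F$, and then to verify the differential inequality on a sufficiently small neighbourhood of $\partial F$ in $F$ using the estimate $dL_k(\eta^F) < -\delta$ on $L_k^{-1}([1-\delta,1])$ from Lemma~\ref{lemma:def-etaF}(1).

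Specifically, I would define
\[
h^F(x) := 1 - \prod_{k \in \mathcal{K}} \bigl(1 - L_k(x)\bigr),
\]
a smooth function on $\R^\ell$. Since $\partial F = \bigcup_{k \in \mathcal{K}} (\sigma_k \cap F)$ and $\sigma_k \cap F = \{L_k = 1\} \cap F$, at every point of $\partial F$ at least one factor of the product vanishes, so $h^F|_{\partial F} \equiv 1$. A product-rule computation yields
\[
dh^F(\eta^F)|_x = \sum_{j \in \mathcal{K}} \Bigl( \prod_{k \neq j} \bigl(1 - L_k(x)\bigr) \Bigr)\, dL_j(\eta^F)|_x,
\]
and on $\relint(F)$, where each $L_j(x) < 1$, this rewrites as
\[
dh^F(\eta^F)|_x = P(x) \sum_{j \in \mathcal{K}} \frac{dL_j(\eta^F)|_x}{1 - L_j(x)},
\]
with $P(x) := \prod_k \bigl(1 - L_k(x)\bigr) > 0$; hence it suffices to make the sum on the right strictly negative on a suitable neighbourhood of $\partial F$ in $F$.

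The hard part is to control this sum uniformly, since terms for which $L_j(x)$ is bounded away from $1$ could a priori be positive. I would first shrink $V_F$ if necessary so that $\eta^F$ is bounded on $V_F$, giving a uniform constant $M$ with $|dL_j(\eta^F)| \leq M$ on $V_F$ for every $j \in \mathcal{K}$. For indices $j$ with $L_j(x) \leq 1 - \delta$, one then has $|dL_j(\eta^F)/(1 - L_j)| \leq M/\delta$; for indices with $L_j(x) > 1 - \delta$, Lemma~\ref{lemma:def-etaF}(1) ensures $dL_j(\eta^F)/(1 - L_j) < 0$. I would then take
\[
W := \bigl\{ x \in F : \max_{k \in \mathcal{K}} L_k(x) > 1 - \epsilon \bigr\}
\]
for $\epsilon$ satisfying $0 < \epsilon < \min\bigl(\delta,\, 1 - \max_{k} L_k(x_F)\bigr)$---the second bound ensures $x_F \notin W$, so that $W \subset V_F \setminus \{0\}$ and $\eta^F$ is defined on $W$---and small enough that $\delta/\epsilon > |\mathcal{K}|\cdot M/\delta$. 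For such $\epsilon$, at any $x \in W$ some $j_0$ satisfies $L_{j_0}(x) > 1 - \epsilon$, contributing $dL_{j_0}(\eta^F)/(1-L_{j_0}) < -\delta/\epsilon$, and this dominates the sum of at most $|\mathcal{K}| - 1$ remaining terms each bounded by $M/\delta$ in absolute value. Therefore $dh^F(\eta^F)|_x < 0$ on $W \cap \relint(F)$, as required.
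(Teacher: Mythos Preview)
Your argument is correct and substantially simpler than the paper's proof. You take the global product $h^F = 1 - \prod_{k \in \mathcal{K}}(1-L_k)$ over \emph{all} relative facets and verify the differential inequality directly by a dominant-term estimate. The paper instead uses local products $h^x = 1 - \prod_{k \in \mathcal{K}_x}(1-L_k)$, where $\mathcal{K}_x$ indexes only the facets through $x$; on a small enough neighbourhood $U_x$ one has $dL_k(\eta^F)<0$ for every $k\in\mathcal{K}_x$, so the inequality for $h^x$ is immediate there. But then these local functions must be patched, and an ordinary partition of unity would destroy the sign of $dh^F(\eta^F)$, so the paper goes to some length to construct a partition of unity that is \emph{constant along the flow of $\eta^F$}, using flow-box coordinates near each boundary point. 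Your direct global estimate bypasses this construction entirely; the price is the slightly more delicate inequality you then prove, and you pay it cleanly using the quantitative bound $dL_k(\eta^F)<-\delta$ from Lemma~\ref{lemma:def-etaF}(1).

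Two small points to tighten. First, you only need the bound $M$ on $|dL_j(\eta^F)|$ over $W$, not all of $V_F$; since $\eta^F$ is a convex combination of the unit radial field and finitely many constant vectors (see the formula~\eqref{eq:def-rho}), it is automatically bounded on $F\setminus\{0\}$, and no shrinking is needed. Second, the phrase ``each bounded by $M/\delta$ in absolute value'' is not literally true for remaining indices $j$ with $L_j(x)>1-\delta$: those terms can be large in absolute value, but by Lemma~\ref{lemma:def-etaF}(1) they are negative, hence only improve the inequality. What you actually use is that the \emph{sum} of the remaining terms is at most $(|\mathcal{K}|-1)\,M/\delta$, which follows because only the terms with $L_j(x)\le 1-\delta$ can be positive.
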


\begin{proof} 
  We first construct a function satisfying the conditions of the lemma
  locally near any point \(x \in \partial F.\) 
  We then use a partition of unity constant along $\eta^F$ to form the global function
  $h^F$ required in the lemma. 

Let \(x \in \partial F\) and let 
$U_x$ be a neighbourhood of $x$ in $\R^\ell$ such that
$U_x$ only intersects facets of
$F$ that contain $x$ and such that $U_x$ is contained in the set $V_F$
where $\eta^F$ is defined.  By construction,
$(dL_k)_x(\eta^F) < 0$ for all $k \in \calK_x$, so
after possibly shrinking $U_x$, we may assume that $dL_k(\eta^F) < 0$
at all points in $U_x$.
On this neighbourhood $U_x$, we define the function 
\begin{equation}\label{eq:hx}
h^x := 1 - \prod_{k \in \calK_x} (1 - L_k).
\end{equation}
At points $y \in U_x \cap \del F$, at least one of the $L_k$ in the
right hand side of~\eqref{eq:hx} is equal to one, so $h^x(y) =1$.  At
points $y \in U_x \cap \relint F$, all the $L_k$ in~\eqref{eq:hx} are
less than $1$, so $h^x(y) < 1$.  Since $dL_k(\eta^F) < 0$ for all $k \in
\calK_x$, each $L_k$ is decreasing along the trajectories of
$\eta^F$. From~\eqref{eq:hx} we then see that $h^x$ is also decreasing
along $\eta^F$ at any point $y$ where $L_k(y) < 1$ for all $k \in
\calK$. In particular, $dh^x(\eta^F) < 0$ on $U_x \cap \relint F$.

Secondly, we patch together these functions $h^x$ by means of
a partition of unity. This will require some extra care since we wish
to guarantee that the resulting function still satisfies the first
condition of the lemma. To accomplish this, 
we now construct a partition of unity $\{ \rho^x \}$ 
such that for each $\rho^x$ we have 
$d\rho^x(\eta^F) = 0$, i.e. the functions $\rho^x$ are constant along
the flow of $\eta^F$. 
Let \(x \in \partial F.\) Since \(\eta^F |_x \neq 0\) and $\eta^F$ is
smooth, there exists a
neighbourhood \(V_x \subset U_x\) of $x$ in $\R^{\ell}$ with a smooth
parametrization \(\phi \colon \Omega_x \to V_x\), where 
\[
\Omega_x := \{ (t_1, \ldots, t_{\ell}) \in \R^{\ell} \hs \mid \hs
   |t_1|<4\varepsilon_1 \text{ and } 
   t^2_2 + \cdots + t_{\ell}^2 < 4 \varepsilon_2 \}
\]
for some $\varepsilon_1, \varepsilon_2 > 0$, 
such that \(\phi(0) = x\) and \(\phi_* \left(\frac{\partial}{\partial
t_1}\right) = \eta^F.\) 
Since $\eta^F$ is transverse to $\partial F$, 
after possibly shrinking $\varepsilon_1$ and $\varepsilon_2$, we may
assume without loss of generality that 
for every value of \(t_2, \ldots, t_{\ell}\) 
that occurs in $\Omega_x$, 
the function $ t_1 \mapsto L_k(\phi((t_1, t_2, \ldots, t_{\ell})) $
is defined for $t_1$ in some interval,
is strictly monotone, and assumes both positive and negative values. 
It then follows that for every $(t_2, \ldots, t_\ell)$ occurring in
$\Omega_x$, 
there exists exactly one $t_1$ such
that \(\phi(t_1, t_2, \ldots, t_{\ell}) \in \partial F.\) 
Moreover, after possibly further shrinking $\varepsilon_2$, we may also assume that 
\[
\{ (t_1, t_2, \ldots, t_{\ell}) \mid \phi(t_1, \ldots, t_{\ell}) \in
\partial F \}
\]
is contained in \(\{|t_1|< 2 \varepsilon_1 \}.\) 

Now observe that the function
\[
\rho^x(q) := 
\begin{cases}
e^{-\frac{1}{\varepsilon_2^2 - t_2^2 - \cdots - t_{\ell}^2}}  &
\begin{minipage}{3.3in}
if $q = \phi(t_1, t_2, \ldots, t_\ell)$ for some
$(t_1, t_2, \ldots, t_\ell) \in \Omega_x$,
$t_2^2 + \cdots + t_{\ell}^2 < \varepsilon_2$, and 
$|t_1| < \frac{5}{2} \varepsilon_1$
\end{minipage} \\
0 & \text{otherwise} \\
\end{cases}
\]
is smooth on the neighbourhood
\[
\widetilde{V}_x := \R^{\ell} \ssminus \{\phi(t_1, t_2, \ldots, t_{\ell})
\, \mid \, 2\varepsilon_1 \leq |t_1| \leq 3 \varepsilon_1, \quad t_2^2
+ \cdots + t_{\ell}^2 \leq 2 \varepsilon_2 \}
\]
of $\del F$ in $\R^\ell$.
To see this, observe that
$\widetilde{V}_x$ is the union of the open set \(\{\phi(t_1, \ldots,
t_{\ell}) \mid |t_1|<2\varepsilon_1 \}\) and the open set \(\R^{\ell} \ssminus \{\phi(t_1,
\ldots, t_{\ell}) \mid |t_1|\leq 3 \varepsilon_1, t_2^2 + \ldots +
t_{\ell}^2 \leq 2 \varepsilon_2 \}.\) On the second set, \(\rho^x \equiv
0\) by definition. On the first set, $\rho^x$ is smooth. Hence $\rho^x$ 
is smooth also on the union.

By slight abuse of notation, we let $\{\rho^x > 0\}$ denote the open
subset in $\widetilde{V}_x$ where $\rho^x$ is positive. By definition of
$\rho^x$, the point \(x \in \del F\) is contained in $\{\rho^x >
0\}$. 
Since $\partial F$ is compact, there exists a finite set \(\{x_1,
\ldots, x_M\} \subseteq \partial F\) such that \(\cup_{j=1}^M
\{\rho^{x_j}>0\}\) contains $\partial F$. 
In particular $\sum_{j=1}^M \rho^{x_j}$ is defined and positive on a neighbourhood
of $\del F$ in $\R^\ell$. Let $\widetilde{V}$ be such a neighbourhood.
We now define 
\begin{equation}\label{eq:def-rhok}
\rho^k := \frac{1}{\sum_{j=1}^M \rho^{x_j}} \rho^{x_k}
\end{equation}
on $\widetilde{V} \cap \widetilde{V}_{x_k}$. Since 
\(d\rho^{x_k}(\eta^F) \equiv 0 \) by construction, it follows that 
$d\rho^k(\eta^F) \equiv 0$ also. 

Since $h^{x_k}$ is defined on \(U_{x_k}\) which entirely contains the
support of $\rho^{x_k}$, the product $\rho^k h^{x_k}$ may be extended
to a smooth function on all of $\widetilde{V}$. Hence the sum 
\begin{equation}\label{eq:def-hF}
h^F := \sum_{i=1}^M \rho^i h^{x_i}
\end{equation}
is well-defined and smooth on $\widetilde{V}$. 

We now claim that \(h^F \equiv 1\) on \(\partial F\) and
\(dh^F(\eta^F) < 0\) on $\widetilde{V} \cap \relint F$.
To see this, suppose \(x \in \partial F\). 
Let \({\mathcal I}_x = \{k \mid x \in U_{x_k}\}.\) Consider 
\begin{equation}\label{eq:Uxi-tildeV}
\left(\bigcap\limits_{k \in {\mathcal I}_x} U_{x_k}\right) \cap
\widetilde{V}. 
\end{equation}
On this set, 
\[
h^F = \sum_{k \in {\mathcal I}_x} \rho^k h^{x_k},
\]
the $h^{x_k}$ for \(k \in {\mathcal I}_x\) are well-defined, 
satisfy \(dh^{x_k}(\eta^F) < 0\) on $\relint(F)$, and moreover, 
\(\sum_{k \in {\mathcal I}_x} \rho^k \equiv 1.\)
Hence for every $y$ in $\partial F \cap 
   \left(\cap_{k \in {\mathcal I}_x} U_{x_k}\right) \cap \widetilde{V}$ 
we have 
\[
h^F(y) = \sum_{k\in {\mathcal I}_x} \rho^k(y) h^{x_k}(y) 
       = \sum_{k \in {\mathcal I}_x} \rho^k(y) = 1.
\]
Moreover, on $\relint(F) \cap 
      \left(\cap_{k \in {\mathcal I}_x} U_{x_k}\right) \cap
\widetilde{V}$, we have 
\[
dh^F(\eta^F) = \sum_{k \in {\mathcal I}_x} d\left(\rho^k h^{x_k}\right)(\eta^F) = 
\sum_{k \in {\mathcal I}_x} \rho^k dh^{x_k}(\eta^F) < 0,
\]
where the second equality uses that \(d\rho^k(\eta^F)
= 0\) by our construction of the $\rho^k$, 
and the last inequality uses that $dh^{x_k}(\eta^F)  < 0$, 
that $\rho^k \geq 0$, and that 
not all the $\rho^k$ vanish. Hence $h^F$ satisfies the
required conditions of the lemma.
\end{proof}

By the construction of $h^F$ in Lemma~\ref{hF}, the differential $dh^F$
never vanishes in $\relint F$.
Thus, for any $\veps > 0$, the level set
\begin{equation}\label{eq:def Zepsilon}
Z_{\varepsilon} := 
    \left(h^F\right)^{-1}(1 - \varepsilon) \ \subset \ F
\end{equation}
is a smooth manifold. 
This is our smooth approximation to $\partial F$. The next lemma
proves smoothness properties of the flow $\psi(t,x)$ along $-\eta^F$
with respect to this level set. 
Using (similar) notation as in Lemma~\ref{lemma:flow-into-Fdelta}, for 
\(x \in \relint F_\delta \ssminus \{0\}\) we let $X(t)$ denote the
trajectory for $-\eta_F$ that is defined for $0 \leq t \leq t_x$ and
such that $X(0) = x$, $X(t_x) \in \del F$, and $X(t) \in \relint F
\ssminus \{0\}$ for all $0 < t < t_x$.  Such a trajectory and $t_x$
exist because $\eta^F$ coincides with $-\ddr$ within $F_\delta$ and by
Lemma~\ref{lemma:flow-into-Fdelta}.

\begin{Lemma} \label{finite time}
Let $\delta>0$ be as in Lemma~\ref{lemma:def-etaF}. 
Then there exists $\varepsilon>0$ sufficiently small such that for all \(x \in
\relint F_\delta\) there exists a unique $t'_x > 0$ with $h^F(X(t'_x))
= 1 - \varepsilon$ and such that the function \(x \mapsto t'_x\) is
smooth on $\relint F_\delta \ssminus \{0\}$. 
\end{Lemma}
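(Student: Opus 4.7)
The plan is to exploit the strict monotonicity of $h^F$ along trajectories of $-\eta^F$: since $\psi(t,x)$ is the flow of $-\eta^F$, wherever $X(t) \in \widetilde{V}$ one has
\[
\frac{d}{dt} h^F(X(t)) \;=\; -dh^F(\eta^F)\big|_{X(t)} \;>\; 0
\]
by Lemma~\ref{hF}, so $h^F \circ X$ strictly increases from below $1$ toward $h^F(X(t_x)) = 1$. Once $\varepsilon$ is chosen suitably, each trajectory will cross the level $1-\varepsilon$ exactly once, and the crossing time will depend smoothly on the starting point.

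The first step is to choose $\varepsilon > 0$, uniformly in $x$, so that the open set $U_\varepsilon := \{y \in F : h^F(y) > 1-\varepsilon\}$ has compact closure contained in the open domain $\widetilde{V} \subset F$ of $h^F$, and is disjoint from $F_\delta$. Since $\partial F$ is compact and $\widetilde{V}$ is an open neighbourhood of $\partial F$ in $F$, I pick a compact set $K \subset \widetilde{V}$ whose interior in $F$ contains $\partial F$. By Lemma~\ref{hF}, $h^F < 1$ on $\relint F$, so it attains a maximum $c < 1$ on the compact topological boundary of $K$ inside $F$, and similarly a maximum $c' < 1$ on the compact set $K \cap F_\delta$. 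Any $\varepsilon < 1 - \max(c,c')$ then forces $\overline{U_\varepsilon} \subset K \subset \widetilde{V}$ and $U_\varepsilon \cap F_\delta = \emptyset$.

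Now fix $x \in \relint F_\delta \ssminus \{0\}$, so that the trajectory $X$ starts at $X(0) = x \notin \overline{U_\varepsilon}$ and, by Lemma~\ref{lemma:flow-into-Fdelta}, ends at $X(t_x) \in \partial F \subset U_\varepsilon$. The key persistence statement is that once $X(t_0) \in \overline{U_\varepsilon}$, the trajectory remains there for all $t \in [t_0, t_x]$: on the connected component of $\{t : X(t) \in \widetilde{V}\}$ containing $t_0$, strict monotonicity gives $h^F(X(t)) > 1-\varepsilon$ and hence $X(t) \in \overline{U_\varepsilon} \subset \widetilde{V}$ for $t > t_0$, and openness of $\widetilde{V}$ forces the component to extend up to $t_x$. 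Setting $t'_x := \inf\{t \in [0,t_x] : X(t) \in \overline{U_\varepsilon}\}$, this infimum lies in $(0,t_x]$; continuity together with the left-limit (valid since $X$ is in $\widetilde{V}$ on a neighbourhood of $t'_x$) pins $h^F(X(t'_x)) = 1-\varepsilon$, and strict monotonicity on $[t'_x,t_x]$ gives uniqueness.

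Smoothness of $x \mapsto t'_x$ is then a standard application of the implicit function theorem to $H(t,y) := h^F(\psi(t,y)) - (1-\varepsilon)$: the nondegeneracy condition $\partial_t H(t'_x,x) = -dh^F(\eta^F)|_{X(t'_x)} > 0$ holds by the monotonicity computation, and smoothness of $H$ on a neighbourhood of $(t'_x,x)$ follows from smoothness of the flow and from $\psi(t'_x,x) \in \overline{U_\varepsilon} \subset \widetilde{V}$. The main technical concern throughout is that a single $\varepsilon$ must work for \emph{all} $x$ simultaneously; this is addressed by the choice above, which depends only on $\partial F$, $K$, and $F_\delta$, not on the initial point.
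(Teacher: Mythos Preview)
Your proof is correct and follows essentially the same approach as the paper: choose $\varepsilon$ so that the superlevel set $\{h^F \geq 1-\varepsilon\}$ sits inside a compact neighbourhood of $\partial F$ contained in the domain of $h^F$, then use continuity for existence, strict monotonicity of $h^F$ along $-\eta^F$ for uniqueness, and the implicit function theorem for smoothness. Your version is somewhat more careful than the paper's (you explicitly verify the persistence of trajectories in $\overline{U_\varepsilon}$ and arrange $U_\varepsilon \cap F_\delta = \emptyset$ to force $t'_x > 0$), but the underlying argument is the same.
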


\begin{proof}
Let $V$ be an open neighbourhood of $\del F$ in $F$ whose closure 
is contained in the open neighbourhood of $\del F$ where $h^F$ is defined.
Its boundary, $\ol{V} \ssminus V$, is a compact subset 
of $\relint F$ on which $h^F < 1$. 
Choose $\veps$ such that 
$1-\veps > \max h^F|_{\ol{V} \ssminus V}$.
The existence of $t_x'$ (specified by this choice of $\veps$) follows from the continuity of the function
$t \mapsto h^F(X(t))$. The uniqueness of $t_x'$ follows from the fact
that 
$dh^F(-\eta^F)>0$ at all points of $\relint F$ where $h^F$ is defined,
which implies that
$t \mapsto h^F(X(t))$ is monotone increasing.

The derivative $\deldelt h^F \circ \psi |_{(t,x)}$ is non-zero
because it is equal to $dh^F_{\psi(t,x)}(-\eta^F)$, 
which is positive by the construction of $h^F$. 
The smoothness of the function $x \mapsto t_x'$ 
now follows by applying the implicit function theorem to 
the condition $h^F(\psi(t,x)) = 1 - \veps$.
\end{proof}

We now wish to use the flow along 
$-\eta^F$ to reparametrize the compact region in
$F$ with boundary $Z_{\varepsilon}$ so that the region is diffeomorphically identified
with the standard closed ball in $\R^{\ell}$ of some radius. Moreover,
we want to arrange that under this identification, 
the vector field ${\eta}^F$ is identified with the
standard inward-pointing radial vector field $-\ddr$. Such a
parametrization allows us to construct the function
$f_\ell$ using explicit coordinates on the standard closed ball.

We now review some properties of $f_{\ell-1}$ useful in the constructions to
follow. 
\begin{itemize}
\item
First, $ \min\limits_{\del F} f_{\ell-1} > \alpha_F $.
Indeed, the induction hypotheses on $f_{\ell-1}$ imply that
$\min\limits_{\del F} f_{\ell - 1}
 = \min\limits_{E \subsetneq F} \{ \alpha_E \} $,
which is greater than $\alpha_F$ by assumption. 
\item  
Second, $df_{\ell-1}(\eta^F) < 0 \text{ on } \del F $
by the construction of $\eta_F$ in Lemma~\ref{lemma:def-etaF}.
\end{itemize}
After possibly shrinking the neighbourhood $U_{\ell - 1}$
of $\del F$ on which $f_{\ell - 1}$ is defined,
we may also assume without loss of generality that
\begin{itemize}
\item
$ \inf\limits_{U_{\ell - 1} \cap \R^{\ell}} f_{\ell - 1} > \alpha_F $;
\item
$\eta_F$ is well defined everywhere on $U_{\ell-1} \cap \R^\ell$;
and 
\item $df_{\ell - 1}(\eta^F) < 0$ on $U_{\ell-1} \cap \R^{\ell}$. 
\end{itemize}

Let $\varepsilon>0$ be sufficiently small
such that the conclusions of Lemma~\ref{finite time} apply,
such that the level set $Z_\veps$ of~\eqref{eq:def Zepsilon} 
is contained in $U_{\ell - 1}$,
and such that $\alpha_F < \min\limits_{Z_\veps} f_{\ell-1}$.
In particular, by the properties of $f_{\ell - 1}$
listed above,
$$ df_{\ell - 1}(\eta^F) < 0 \text{ near } Z_\veps. $$

To achieve the reparametrization mentioned above, we will need 
to further rescale $\eta^F$.
Specifically, choose \(r_2 > r_1 > 0\) such that the set $F_\delta$ 
of Lemma~\ref{lemma:def-etaF}
contains the closed ball of radius $r_2$ about the origin,
and consider the spheres $S^{\ell}_{r_1}$ and $S^{\ell}_{r_2}$ with
center $0$  and radii $r_1$ and $r_2$ respectively.
In the lemma below and in the arguments that follow, 
we rescale $\eta^F$ within the region of $F$ contained
between the concentric spheres $S^{\ell-1}_{r_1}$ and
$S^{\ell-1}_{r_2}$. This makes the
time required to flow out to $Z_\veps$ uniform along $S^{\ell-1}_{r_1}$.

\begin{Lemma} \label{normalize}
There exists a smooth function \(\sigma \colon \R^{\ell} \to \R \)
that takes positive values, is equal to $1$ outside the set
\( \{x \in \R^{\ell} \, \vert \, r_1 < |x| < r_2 \} \), 
and such that for the flow $\psi(t,x)^{new}$ corresponding to the
rescaled vector field 
  \(- \eta^F_{\new} := - \sigma \eta^F, \) there exists a constant
  $R>0$ such that 
$\psi(R,x)^{\new}$ is defined and belongs to $Z_\veps$
for all $x \in S_{r_1}^{\ell-1}$.
\end{Lemma}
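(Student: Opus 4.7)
The plan is to exploit the fact that both concentric spheres $S^{\ell-1}_{r_1}$ and $S^{\ell-1}_{r_2}$ lie inside $F_\delta$, where $\eta^F = -\ddr$ by Lemma~\ref{lemma:def-etaF}(2). Thus on the open annulus $A := \{r_1 < |x| < r_2\}$ we have $-\eta^F_{\new} = \sigma \cdot \ddr$, so along any fixed ray the flow preserves the angular direction and satisfies $\dot r = \sigma$. In polar coordinates $(r,\theta) \in [r_1,r_2] \times S^{\ell-1}$, the time taken by a trajectory starting at $r_1\theta$ to reach $S^{\ell-1}_{r_2}$ equals
\begin{equation*}
\int_{r_1}^{r_2} \frac{dr}{\sigma(r\theta)} .
\end{equation*}
Once the trajectory reaches $r_2\theta \in S^{\ell-1}_{r_2}$, the rescaling $\sigma \equiv 1$ outside $A$ forces $\psi^{\new}$ to coincide with the original flow $\psi$ of $-\eta^F$, and by Lemma~\ref{finite time} this extended trajectory reaches $Z_\veps$ in additional time $t'_{r_2\theta}$. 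Therefore the total time to reach $Z_\veps$ starting from $r_1\theta$ is $\int_{r_1}^{r_2} \sigma(r\theta)^{-1} dr + t'_{r_2\theta}$, and our task is to choose $\sigma$ making this quantity independent of $\theta$.

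The function $\theta \mapsto t'_{r_2\theta}$ is smooth on $S^{\ell-1}$ by Lemma~\ref{finite time}, hence bounded. Fix
\begin{equation*}
R \ > \ (r_2 - r_1) + \max_{\theta \in S^{\ell-1}} t'_{r_2\theta}
\end{equation*}
and set $T(\theta) := R - t'_{r_2\theta}$, so $T$ is smooth on $S^{\ell-1}$ and $c(\theta) := T(\theta) - (r_2 - r_1) > 0$. Let $\chi \colon [r_1,r_2] \to \R_{\geq 0}$ be a smooth nonnegative function, supported in a compact subinterval of $(r_1,r_2)$ and satisfying $\int_{r_1}^{r_2} \chi(r)\, dr = 1$. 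Define
\begin{equation*}
\sigma(r\theta) \ := \ \frac{1}{1 + c(\theta)\,\chi(r)} \qquad \text{on } A,
\end{equation*}
and extend by $\sigma \equiv 1$ on $\R^\ell \ssminus A$. Because $\chi$ vanishes to all orders at $r=r_1$ and $r=r_2$, this $\sigma$ is smooth on all of $\R^\ell$, positive everywhere, and equal to $1$ outside $A$. A direct computation gives $\int_{r_1}^{r_2} \sigma(r\theta)^{-1} dr = (r_2 - r_1) + c(\theta) = T(\theta)$, so the total travel time from $r_1\theta$ to $Z_\veps$ equals $T(\theta) + t'_{r_2\theta} = R$, uniformly in $\theta$. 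In particular $\psi(R,x)^{\new}$ is defined and lies in $Z_\veps$ for every $x \in S^{\ell-1}_{r_1}$.

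The hard part is really the bookkeeping: verifying that one can simultaneously prescribe the value of the $\theta$-dependent travel time across the annulus while keeping $\sigma$ smooth, positive, and equal to $1$ (with all derivatives) on $\partial A$. The explicit formula above handles all of this at once because $c(\theta)$ enters only through a bounded smooth multiplier of $\chi$, and $\chi$ is chosen with compact support in the open annulus. The condition $R$ chosen large enough is precisely what guarantees $c(\theta) > 0$, which in turn guarantees $1 + c(\theta)\chi(r) > 0$ and hence the positivity of $\sigma$; there is no obstruction because $\chi \geq 0$.
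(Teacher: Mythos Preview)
Your proposal is correct and follows essentially the same approach as the paper: both construct $\sigma$ of the form $\sigma(r\theta) = (1 + c(\theta)\chi(r))^{-1}$ on the annulus (the paper writes $\beta$ for your $\chi$ and $T$ for your $c$), and both choose the angular function so that the total travel time to $Z_\veps$ becomes a constant $R$. The only cosmetic difference is that the paper measures the residual travel time $t'_x$ from the inner sphere $S^{\ell-1}_{r_1}$ while you measure $t'_{r_2\theta}$ from the outer sphere; these differ by the constant $r_2 - r_1$, so the two constructions are literally the same.
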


\begin{proof}
By construction, $\eta^F$ agrees with $-\ddr$ in the region between
$S^{\ell-1}_{r_1}$ and $S^{\ell-1}_{r_2}$. 
Given a smooth function $h_T(r)$ of one variable and 
vector field $h_T(r) \ddr$ on $\R^\ell \ssminus \{0\}$, it takes time 
\begin{equation}\label{eq:travel}
\int_{r_1}^{r_2} \frac{dr}{h_T(r)}
\end{equation} to flow along this vector field
from a point in $S^{\ell-1}_{r_1}$ to 
$S^{\ell-1}_{r_2}$. Let $ \beta \colon [r_1,r_2] \to \R_{\geq 0} $
be a smooth function such that $\beta(r) \equiv 0$ 
for $r$ near both $r_1$ and $r_2$ and such that 
$ \int_{r_1}^{r_2} \beta(r) dr = 1$. 
For $T>0$, define $h_T: [r_1, r_2] \to \R$ by 
$$ h_T(r) := \frac{1}{1 + T \beta(r)}. $$
Then by construction, the travel time~\eqref{eq:travel} is equal to
$r_2 - r_1 + T$, $h_T(r) \equiv 1$ 
for $r$ near both $r_1$ and $r_2$,
and $(T,r) \mapsto h_T(r)$ is smooth. 

Now, let $T \colon S^{\ell-1}_{r_1} \to \R_{>0}$ be a smooth function.
For $x \in \R^\ell$, define 
\[
\sigma(x) = 
\begin{cases}
 h_{T(r_1 \frac{x}{\|x\|})} (\|x\|) & \mbox{if} \hs r_1 \leq \|x\| \leq r_2 \\
 1 & 0 \leq \|x\| < r_1 \text{ or } \|x\|> r_2 .
\end{cases}
\]
This defines a smooth function $\sigma \colon \R^\ell \to \R$.
Define a new vector field $\eta^F_{new}$ by 
$$ \eta^F_\new = \sigma \eta^F.$$
This vector field has the same trajectories as $\eta^F$, but with
different time parametrizations.
By the construction of the
function $h_T$, the travel time along the trajectory from a point $x$ in
$S^{\ell-1}_{r_1}$ to the sphere $S^{\ell-1}_{r_2}$
for the rescaled vector field
$-\eta^F_\new$ is equal to 
\[
T(x) + r_2 - r_1   =
T(x) + \left( \textup{ the travel time for the vector field }
-\eta^F\right).
\]
It follows that the travel time along the
trajectory of $-\eta^F_\new$ from a point $x \in S^{\ell-1}_{r_1}$ to $Z_\veps$
is equal to $t'_x + T(x)$ where $t'_x$ is as in Lemma~\ref{finite time}.
To finish the proof, it therefore remains to choose the function 
$T \colon S^{\ell-1}_{r_1} \to \R_{>0}$
so that $R := t_x' + T(x) $ is independent of $x \in S^{\ell-1}_{r_1}$.
Pick any $R > \max\limits_{x \in S^{\ell - 1}_{r_1}} t_x'$.
For $x \in S^{\ell - 1}_{r_1}$, define $ T(x) := R - t_x'$.
Then this function $T$ is smooth, it takes positive values, and 
$T(x) + t_x'$ is evidently independent of 
$x \in S^{\ell - 1}_{r_1}$,
as required.
\end{proof}

We can use the modified $\eta^F_{new}$ to create a diffeomorphism
between a standard closed ball in $\R^\ell$ and a subset of $F$ as
follows. Let $\psi^{new}(t,x)$ denote the flow along $-\eta^F_{new}$. 
For a real number $r>0$, let $B^\ell(r,0)$ denote the standard closed ball in $\R^\ell$ of
radius $r$ centered at $0 \in \R^\ell$. 
We define
$$
\Psi \colon B^{\ell}(r_1+R,0) \to 
  F \ssminus (h^F)^{-1}\left( (1-\varepsilon, 1] \right) 
$$ 
by 
\begin{equation}\label{eq:def-Psi}
\Psi(x) = \begin{cases}
x & \text{ if } \| x \| < r_1;                        \\
\psi^{new}(t,x) 
 & \text{ if } x = y + t \frac{y}{\|y\|} \hs \text{for some} \hs
y \in S_{r_1}^{\ell-1} \\
 & \quad \text{      and } 0 \leq t \leq R.
\end{cases}
\end{equation}

\begin{Lemma} \label{intertwines}
\begin{enumerate}
\item $\Psi$ intertwines the vector field $-\ddr$ 
      with the vector field $\eta^F_{new}$.
\item $\Psi$ is a diffeomorphism. 
\end{enumerate}
\end{Lemma}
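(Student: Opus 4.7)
The plan is to prove (1) directly from the definitions and then deduce (2) from (1) together with the transversality properties of $-\eta^F_{\new}$.

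For (1), I will work separately on the two regions. On the open ball $\{\|x\|<r_1\}$, the map $\Psi$ is the identity; and since this ball is contained in $F_\delta$ (where $\eta^F=-\partial/\partial r$ by Lemma~\ref{lemma:def-etaF}) and in the region where $\sigma\equiv 1$ (by Lemma~\ref{normalize}), we have $\eta^F_{\new}=-\partial/\partial r$ there, so the intertwining is trivial. On the annular region $\{r_1\le\|x\|\le r_1+R\}$, given $x=y+ty/\|y\|$ with $y=r_1 x/\|x\|$ and $t=\|x\|-r_1$, the radial direction satisfies $x/\|x\|=y/\|y\|$, so the trajectory $x_s:=x-sx/\|x\|$ of $-\partial/\partial r$ satisfies $x_s=y+(t-s)y/\|y\|$. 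By the definition~\eqref{eq:def-Psi},
\[
\Psi(x_s)=\psi^{\new}(t-s,y),
\]
so differentiating at $s=0$ and using that $\psi^{\new}$ is the flow of $-\eta^F_{\new}$,
\[
d\Psi_x\bigl(-\tfrac{\partial}{\partial r}\bigr)
=\tfrac{d}{ds}\bigl|_{s=0}\psi^{\new}(t-s,y)
=-\tfrac{\partial}{\partial t}\psi^{\new}(t,y)
=\eta^F_{\new}\bigl|_{\Psi(x)}.
\]

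For (2), smoothness across the sphere $\{\|x\|=r_1\}$ follows from the same observation as above: near $S^{\ell-1}_{r_1}$ we have $\eta^F_{\new}=-\partial/\partial r$, hence $\psi^{\new}(t,y)=y+ty/\|y\|$ for small $t\ge 0$, so the second piece of the definition of $\Psi$ coincides with the identity in a neighbourhood of $S^{\ell-1}_{r_1}$ and glues smoothly to the first piece. Injectivity on the inner ball is obvious; on the annulus it follows because $-\eta^F_{\new}$ is transverse to $S^{\ell-1}_{r_1}$ (where it equals $\partial/\partial r$) and transverse to $Z_\veps$ (since $dh^F(-\eta^F)>0$ by Lemma~\ref{hF}, and $-\eta^F_{\new}$ is a positive multiple of $-\eta^F$), so the map $(y,t)\mapsto\psi^{\new}(t,y)$ on $S^{\ell-1}_{r_1}\times[0,R]$ is a diffeomorphism onto its image. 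Moreover, along a trajectory the value of $h^F$ strictly increases from $h^F(y)<1-\veps$ at $t=0$ to $h^F=1-\veps$ at $t=R$, so the trajectory stays inside $F\smallsetminus(h^F)^{-1}((1-\veps,1])$ and does not exit through $\partial F$.

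It remains to show surjectivity onto $F\smallsetminus(h^F)^{-1}((1-\veps,1])$. Given a point $z$ in the target with $\|z\|\ge r_1$, run the reverse trajectory $s\mapsto\psi^{\new}(-s,z)$, which is an integral curve of $\eta^F_{\new}$. Because $\eta^F_{\new}$ coincides with a positive multiple of $-\partial/\partial r$ throughout $F_\delta$, once the reverse trajectory enters $F_\delta$ it proceeds radially inward and must cross $S^{\ell-1}_{r_1}$ in finite time; until that happens the trajectory stays in the compact set $\{x\in F:h^F(x)\le h^F(z)\}\subset F_\delta$ (using that $\eta^F$ decreases $h^F$ and equals $-\partial/\partial r$ inside $F_\delta$), so there is no escape in finite time. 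Thus $z=\psi^{\new}(t,y)$ for a unique $y\in S^{\ell-1}_{r_1}$ and $t\in[0,R]$, giving $z=\Psi(y+ty/\|y\|)$. Smoothness of the inverse follows from the inverse function theorem, since (1) implies that $d\Psi$ is everywhere of full rank. The main subtlety is the surjectivity argument together with the verification that the two pieces of $\Psi$ match smoothly across $\|x\|=r_1$; both hinge on the fact that $\eta^F=-\partial/\partial r$ on the whole ball of radius $r_2$ and that $\sigma\equiv 1$ outside the annulus $\{r_1<\|x\|<r_2\}$.
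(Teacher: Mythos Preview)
Your argument for part~(1) is correct and matches the paper's. Part~(2), however, has two genuine gaps.

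First, your claim that ``(1) implies that $d\Psi$ is everywhere of full rank'' is false: part~(1) only says that $d\Psi$ sends the single vector $-\partial/\partial r$ to the nonzero vector $\eta^F_{\new}$, which gives rank at least one, not $\ell$. You must also control the tangential directions. The paper does this by noting that on the sphere of radius $r_1+t_0$ the map $\Psi$ agrees with $y\mapsto\psi^{\new}(t_0,y)$, and the time-$t_0$ flow map $\psi^{\new}(t_0,\cdot)$ is itself a local diffeomorphism; hence $d\Psi$ carries $T_xS^{\ell-1}_{r_1+t_0}$ isomorphically onto an $(\ell-1)$-plane which, together with $\eta^F_{\new}$, spans~$\R^\ell$. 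Your earlier remark about transversality of $-\eta^F_{\new}$ to the two boundary hypersurfaces $S^{\ell-1}_{r_1}$ and $Z_\veps$ does not supply this; transversality at the endpoints of a flow says nothing about the rank of the flow map at intermediate times.

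Second, your surjectivity argument is incoherent as written. The function $h^F$ is defined only on a neighbourhood of $\partial F$ (Lemma~\ref{hF}), so the set $\{x\in F:h^F(x)\le h^F(z)\}$ is not well defined as a subset of $F$; and whatever is intended, it certainly is not contained in the inner polytope $F_\delta$, since the level sets of $h^F$ lie near $\partial F$. The sentence is in fact self-contradictory: you are describing the reverse trajectory \emph{before} it enters $F_\delta$, yet assert it lies inside a subset of $F_\delta$. The paper avoids tracing reverse trajectories altogether and gives a one-line topological argument instead: once $\Psi$ is known to be an injective local diffeomorphism (hence an open map) from the compact ball to the connected Hausdorff target $F\ssminus(h^F)^{-1}((1-\veps,1])$, its image is open, nonempty, and compact, hence all of the target.
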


\begin{proof} 
Since $\eta^F_{new}$ is defined in a neighbourhood $V_F$ of $F$, the
flow along the vector field $-\eta^F_{new}$ defines a smooth map 
$(-r_1, R+\veps_0) \times S^{\ell-1}_{r_1} \to \R^\ell$, 
$(x,t) \mapsto \psi^{new}(t,x)$, for sufficiently small $\veps_0 > 0$. 
We use the diffeomorphism \(x+t \frac{x}{\|x\|} \mapsto (t,x)\) 
to identify the interior of $B(r_1+R+\veps_0,0)\ssminus \{0\}$
with $(-r_1, R+\veps_0) \times S^{\ell-1}_{r_1}$. Since 
$-\eta^F_{new} \equiv \ddr$ is the standard radial vector field within
the ball of radius $r_1$, 
the restriction of this flow to $B^{\ell}(r_1+R,0) \ssminus \{0\}$ is precisely the map $\Psi$. In particular, $\Psi$ is smooth 
on $B^{\ell}(r_1+R,0) \ssminus\{0\}$.
Since $\Psi$ is the identity map near $0$, $\Psi$ is smooth everywhere.

It follows from the above that $\Psi$ carries the standard
radial vector field $\ddr$ on $B^\ell(r_1+R,0) \ssminus\{0\}$ to
$-\eta^F_{new}$, which implies (1). Moreover, 
Lemma~\ref{normalize} and the definition of $\eta^F_{new}$ 
imply that $\image(\Psi)$ is exactly $F \ssminus
(h^F)^{-1}((1-\varepsilon, 1])$. In particular, the boundary
$S^{\ell-1}_{r_1+R}$ is carried to $Z_\varepsilon$. 

We now show that $\Psi$ is a diffeomorphism. First, by the theory of
ODEs, trajectories of $-\eta^F_{new}$ are disjoint, so $\Psi$ is
injective. Next we claim that $\Psi$ is a local diffeomorphism for all
\(x \in B^{\ell}(r_1+R,0)\).  It suffices to prove that the
differential $d\Psi_x$ is always onto. From the definition of $\Psi$,
the claim is obvious for any point \(x\) in the interior
of $B^\ell(r_1,0)$.  Hence we may assume that \(x \in B^\ell(r_1+R,0)\) 
is of the form   $x = y + t_0 \frac{y}{\|y\|}\) 
for \(y \in S^{\ell-1}_{r_1}$ and $0 \leq t_0 < R$. 
For a fixed $t$, denote by $\psi^{new}(t,\cdot)$ 
the map \(x \mapsto \psi^{new}(t,x).\) By definition of $\Psi$, the image
\((\Psi)_*(T_xS^{\ell-1}_{r_1+t_0})\) is equal to \((\psi^{new}(t_0,
\cdot))_*(T_yS^{\ell-1}_{r_1})),\) and we have already seen above that
$\Psi_*(\ddr) = -\eta^F_{new} = \left(\psi^{new}(t_0,
  \cdot)\right)_*(\ddr)$. Since $\psi^{new}(t_0,\cdot)$ is a local
diffeomorphism, we conclude $\Psi_*$ is also onto at every \(x = y +
t_0 \frac{y}{\|y\|}\), as desired.

Since $\Psi$ is a local diffeomorphism, it is in particular an open
map. Surjectivity now follows from the general fact that a continuous
open map from a nonempty compact space to a connected Hausdorff space
is surjective. Hence $\Psi$ is injective, surjective, and locally a
diffeomorphism, hence a global diffeomorphism as desired.

\end{proof}

We are now in a position to explicitly construct an extension of
$f_{\ell-1}$ to the interior $\relint{F}$ of $F$. We do so by using the
reparametrization to $B(r_1+R,0)$ given by $\Psi$. 
Recall that the function $f_{\ell-1}$ is defined on a neighbourhood $U_{\ell-1}$ of
$\partial F$ in $F$ that contains, by our assumption on $\varepsilon$,
the hypersurface $Z_\veps = \Psi(\del B_{r_1+R})$.  Let
$\Psi^*f_{\ell-1}$ be the pullback of $f_{\ell-1}$ to a neighbourhood
$U_{\del B}$ of the boundary in $B(r_1+R,0)$. Since
\begin{itemize}
\item
$\Psi$ carries $-\ddr$ to $\eta^F_{new}$,
\item
\(\eta^F_{new}\) coincides with \(\eta^F\) 
near $Z_{\varepsilon}$, 
\item
$df_{\ell-1}(\eta^F) < 0$ near $Z_{\varepsilon}$, and
\item
$\alpha_F < \min\limits_{Z_\veps} f_{\ell-1}$,
\end{itemize}
after possibly shrinking the neighbourhood $U_{\partial B}$ 
of $\del B$ in $B(r_1+R,0)$ we may assume that 
\[
d(\Psi^* f_{\ell-1})\left(-\ddr\right) < 0
\quad \text{ on } \quad U_{\partial B},
\]
and that there exists $\gamma \in \R$ such that 
$\alpha_F < \gamma < \inf\limits_{U_{\del B}} \Psi^* f_{\ell-1}$.

We now define a function which, when patched together with $\Psi^*
f_{\ell-1}$ via a partition of unity, will yield the desired
extension. 
Define  
\begin{equation}\label{eq:def zeta}
 \zeta(x) := \alpha_F + (\gamma - \alpha_F) \left(\frac{\|x\|^2}{|r_1
    + R|^2}\right) \text{ \ \ for \ } x \in B(r_1+R,0) \subseteq \R^\ell. 
\end{equation}
Then by construction \(\zeta(x) < \gamma\) for all \(x \in B(r_1+R,0),\)
so 
$$ \Psi^* f_{\ell-1} > \zeta \quad \text{ on } U_{\del B}.$$
Since $\del B(r_1+R,0)$ is compact, 
there exists an $\tilde{R}$ with \(0 < \tilde{R} < r_1 + R\) such that the annulus
$$ \{ x \ | \ \tilde{R} \leq \| x \| \leq r_1+R \} $$
is contained in the neighbourhood $U_{\del B}$
of $\del B(r_1+R,0)$. Let $\rhobar: [0,r_1+R] \to \R$ be a smooth
function such that 
\begin{itemize}
\item
$\rhobar$ is weakly monotone increasing, and 
\item
there exist $R_1$ and $R_2$ such that
$0 < \tilde{R} < R_1 < R_2 < r_1+R$ and 
such that $\rhobar = 0$ 
on $[0,R_1]$ and $\rhobar =1$ on $[R_2,r_1+R]$.
\end{itemize}
Given such a $\rhobar$, define 
$ \rho \colon B(r_1+R,0) \to \R $
by
$ \rho(x) := \rhobar( \| x \| )$.
Note that 
\(\left(-\ddr\right)\rho \leq 0\) by assumption on
$\rhobar$. 
Consider the function 
\begin{equation}\label{eq:extension-on-ball}
\rho \Psi^* f_{\ell-1} + (1-\rho) \zeta \colon \ B^{\ell}(r_1+R,0) \ \to \ \R.
\end{equation}

\begin{Lemma} \label{properties of combination}
The function~\eqref{eq:extension-on-ball} 
\begin{itemize} 
\item is smooth;
\item has a unique critical point in the relative interior
  $\mathring{B}_{r_1+R}$, and this critical point is at the origin $0$;
\item at $0$ it takes the value $\alpha_F$.
\end{itemize}
Moreover, there exists a neighbourhood of \(\partial B(r_1+R,0)\) 
in $\R^\ell$
on which this function agrees with $\Psi^*f_{\ell-1}$. 
\end{Lemma}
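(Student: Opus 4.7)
The plan is to verify each of the listed properties directly by using the explicit construction, chiefly by exploiting the way $\rho$ controls which of $\Psi^*f_{\ell-1}$ or $\zeta$ is ``active.''

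For smoothness, I would argue as follows. The function $\zeta$ is smooth on all of $B^\ell(r_1+R,0)$ by its explicit polynomial formula~\eqref{eq:def zeta}, so $(1-\rho)\zeta$ is smooth on the whole ball. For the other term, note that $\rho$ vanishes on $\{ \|x\| \le R_1 \}$, hence the support of $\rho$ is contained in $\{ \|x\| \ge R_1 \} \subset \{ \|x\| > \tilde R\}$, and by the choice of $\tilde R$ this set lies in the open neighbourhood $U_{\partial B}$ of $\partial B(r_1+R,0)$ on which $\Psi^* f_{\ell-1}$ is defined and smooth. Extending $\rho \, \Psi^* f_{\ell-1}$ by zero outside $U_{\partial B}$ therefore gives a smooth function on the full ball, and summing yields smoothness of~\eqref{eq:extension-on-ball}. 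Agreement with $\Psi^* f_{\ell-1}$ near $\partial B(r_1+R,0)$ is immediate because $\rho \equiv 1$ on the annulus $R_2 \le \|x\| \le r_1+R$, and the value at the origin is $\zeta(0) = \alpha_F$ because $\rho(0) = 0$.

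The substantive point is the critical-point analysis, which I would carry out by splitting $B^\ell(r_1+R,0)$ into the three radial regions dictated by $\rho$. On $\{ \|x\| \le R_1 \}$ the function equals $\zeta$, whose only critical point is at $0$. On $\{ \|x\| \ge R_2 \}$ the function equals $\Psi^* f_{\ell-1}$, and this region lies in $U_{\partial B}$, where (by construction, since $\Psi$ sends $-\partial/\partial r$ to $\eta^F_{\textup{new}}$ which agrees with $\eta^F$ near $Z_\varepsilon$, and $df_{\ell-1}(\eta^F) < 0$ there) we have $d(\Psi^* f_{\ell-1})(-\partial/\partial r) < 0$, so there are no critical points. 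On the transition annulus $R_1 \le \|x\| \le R_2$ I would compute the radial derivative,
\[
\tfrac{\partial g}{\partial r} = \rho \, \tfrac{\partial}{\partial r}(\Psi^* f_{\ell-1}) + (1-\rho)\, \tfrac{\partial \zeta}{\partial r} + \tfrac{\partial \rho}{\partial r}\bigl(\Psi^* f_{\ell-1} - \zeta\bigr),
\]
and observe that each of the three summands is non-negative in this region: $\partial \rho/\partial r \ge 0$ since $\overline{\rho}$ is weakly monotone increasing; $\Psi^* f_{\ell-1} - \zeta > 0$ on $U_{\partial B}$ by the choice of $\gamma$; $\partial \zeta/\partial r > 0$ for $\|x\| > 0$ from~\eqref{eq:def zeta}; and $\partial(\Psi^* f_{\ell-1})/\partial r > 0$ throughout $U_{\partial B}$. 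Because $\rho$ and $1-\rho$ cannot both vanish in the open annulus $R_1 < \|x\| < R_2$, at least one of the first two (strictly positive) terms contributes nontrivially there, so $\partial g/\partial r > 0$ and in particular $dg \neq 0$.

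The only boundary cases requiring care are $\|x\| = R_1$ and $\|x\| = R_2$; at each of these the smoothness of $\overline\rho$ combined with the fact that $\overline\rho$ is locally constant on one side forces $d\rho = 0$ there, so $g$ locally equals $\zeta$ (respectively $\Psi^* f_{\ell-1}$), and the argument above applies. Combining the three regions shows that the only critical point in the interior is the origin, finishing the proof. The only step that I expect to require a little thought in presentation (rather than content) is bookkeeping the case where $\rho = 0$ on an open set so that $\rho\,\Psi^* f_{\ell-1}$ must be interpreted as extended by zero; everything else is routine sign-checking.
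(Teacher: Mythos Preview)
Your proposal is correct and follows essentially the same approach as the paper: both proofs establish the critical-point claim by computing the radial derivative of~\eqref{eq:extension-on-ball} and checking that each of the three resulting terms has the right sign, with at least one strictly nonzero away from the origin. Your version is in fact slightly more careful than the paper's---you explicitly address the issue that $\Psi^*f_{\ell-1}$ is only defined on $U_{\partial B}$ (the paper glosses over this by calling the function a ``smooth convex sum of two smooth functions''), and you separate out the boundary radii $\|x\|=R_1,R_2$---but the underlying argument is the same.
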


\begin{proof}
  The smoothness of~\eqref{eq:extension-on-ball} follows immediately
  from the fact that it is a smooth convex sum of two smooth
  functions.  To show that the only critical point is at $0$, suppose
  that \(x \neq 0.\) We will show that the differential does not
  vanish at $x$.  We compute
\begin{equation*}
\begin{split}
-\ddr \left(\rho \Psi^*f_{\ell-1} + (1-\rho)\zeta\right)
 & = \rho \left( - \ddr \Psi^*f_{\ell-1} \right) 
   + (1-\rho) \left( - \ddr \zeta \right)
   - \frac{d\rho}{dr} \Psi^* f_{\ell-1}
   - \frac{d(1-\rho)}{dr} \zeta \\
 & = \rho \left( - \ddr \Psi^* f_{\ell-1} \right)
   + (1-\rho) \left( - \ddr \zeta \right)
   - \frac{d\rho}{dr} \left( \Psi^* f_{\ell-1} - \zeta \right).  
\end{split}
\end{equation*}
Since $-\ddr \Psi^* f_{\ell-1}  < 0$ where defined (i.e. near the boundary), 
$-\ddr \zeta <0$ where defined and for $x\neq 0$, \  
\(\Psi^*f_{\ell-1} - \zeta > 0\) by construction of $\zeta$, and
$-\ddr \rho \leq 0$ by assumption, the last quantity is always $\leq 0$. 
In fact, by the above, at least one of the first two terms must be
\emph{strictly} negative for any $x \neq 0$. 
Hence the quantity is non-zero and 
we conclude that the points \(x \neq 0\) are not critical points
of~\eqref{eq:extension-on-ball}. On the other hand, for $x=0$, since
$\zeta$ is defined in terms of the norm-square $\|x\|^2$, it is
immediate that $x=0$ is a critical point. 
Finally, in the neighbourhood of \(\partial B{r_1+R}\) where \(\rho \equiv 1\) 
the function~\eqref{eq:extension-on-ball} is equal to $\Psi^* f_{\ell-1}$.
\end{proof}

Now consider the pullback of the function~\eqref{eq:extension-on-ball} to 
$F \ssminus (h^F)\Inv ((1-\veps,1])$ via the diffeomorphism
$\Psi^{-1}$ inverse to $\Psi$. By Lemma~\ref{properties of
  combination}, this pullback agrees with $f_{\ell-1}$ on a
neighbourhood of the boundary $Z_\varepsilon$. Thus we may extend this
pullback to a smooth function $f_{\ell,F}$ on all of $F$ by setting
$f_{\ell,F}:=f_{\ell-1}$ on $(h^F)^{-1}((1-\varepsilon, 1])$. This
function $f_{\ell,F}$ has the properties that 
\begin{itemize} 
\item $f_{\ell,F}$ has a unique critical point at the origin $0 \in F$; 
\item the unique critical point $0$ is a global minimum; and 
\item $f_{\ell,F}(0) = \alpha_F$.
\end{itemize}

We have achieved our goal of extending the initial function $f_{\ell-1}$ 
(after possibly shrinking its domain of definition)
to a function $f_\ell$ that is defined on the entire face $F$. 
Repeating this for every $\ell$ dimensional face,
we obtain a function $f_{\ell,F}$ on each $\ell$ dimensional face $F$
such that $F_{\ell,F}$ agrees with $f_{\ell-1}$
on some neighbourhood $U_F$ of $\del F$ in $F$.
We would like to extend these functions further
to a whole ($n$-dimensional) neighbourhood of the $\ell$-skeleton of $\Delta$
whole ensuring that the required conditions (f1)--(f3) continue to hold.

Below, we extend each $f_{\ell,F}$ to a smooth function $\tilde{f}_{\ell,F}$ 
on a neighbourhood of $F$ in $\Delta$ 
such that each function $\tilde{f}_{\ell,F}$ agrees with $f_{\ell-1}$ 
on a neighbourhood of $\partial F$ in $\Delta$
and satisfies the derivative condition 
\[
d(\tilde{f}_{\ell,F})(\xi_j) < 0
\]
on $F$ for all $j$ such that $F \subset \sigma_j$.
Before proceeding, and supposing for a moment that such functions 
$\tilde{f}_{\ell,F}$ can be constructed, we first explain how this
completes the proof of the statements (A) and (B) 
of Proposition~\ref{prop:bump-function new}. 
Choose open subsets $V_F$ of $\Delta$ such that $F
\subseteq V_F$ and $V_F \subseteq U_F$. By shrinking the $V_F$
if necessary, we may without loss of generality assume that for two
distinct nontrivially intersecting faces $F \cap F' \neq \emptyset$,
the functions $\tilde{f}_{\ell,F}$ and $\tilde{f}_{\ell, F'}$ agree on
the overlap of the open sets, i.e. \(\tilde{f}_{\ell,F} \vert_{V_F
  \cap V_{F'}} = \tilde{f}_{\ell,F'} \vert_{V_F \cap V_{F'}},\) since
the $\{\tilde{f}_{\ell,F}\}$ are assumed to agree on a neighbourhood in
$\Delta$ of the $(\ell-1)$-skeleton (which contains $F \cap F'$). With
this understood, we may therefore define a smooth function $f_\ell$ on
the open neighbourhood $\bigcup_F V_F$ in $\Delta$ of the
$\ell$-skeleton by \(f_{\ell} \vert_{V_F} := \tilde{f}_{\ell,F}.\)
By construction, $f_{\ell}$ satisfies the properties (f1)-(f3)
listed above. This then completes the inductive step and hence the
proof.

Hence, to complete the proof of statements (A) and (B) of Proposition~\ref{prop:bump-function
  new}, it remains only to construct these extensions
$\tilde{f}_{\ell,F}$. We begin by choosing a convenient (non-linear)
coordinate chart. Recall that we are assuming that the affine span
of the face $F$ is $\R^{\ell}$, embedded in $\R^n$ as $\R^{\ell}
\times \{ 0 \}^{n-\ell}$.  Fix smooth extensions of the vector fields
$\xi_1,\ldots,\xi_{n-\ell}$ to $\R^{\ell}$.  By Lemma~\ref{lemma:xi},
the vectors \( \xi_1\vert_x , \ldots , \xi_{n-\ell}\vert_x \) are
linearly independent and span a complementary subspace to $\R^{\ell}
\subset \R^n$ at each point $x \in F$. Thus, the differential of the
map
\begin{equation}\label{eq:def varphi}
\begin{split}
\varphi \colon \R^{\ell} \times \R^{n-\ell} & \to \R^n  \\ 
 (x,y_1,\ldots,y_{n-\ell}) & \mapsto 
   x + y_1 \xi_1\vert_x + \ldots + y_{n-\ell} \xi_{n-\ell}\vert_x \\
\end{split}
\end{equation}
is a linear isomorphism at each point $x$ of $F$.
This implies that there exists $\veps > 0$ 
and a neighbourhood $W_F$ of $F$ in $\R^{\ell}$
such that the map $\varphi$ restricts to a diffeomorphism 
of $W_F \times (-\veps,\veps)^{n-\ell}$ with an open subset of $\R^n$
that carries a neighbourhood of $F \times \{ 0 \}^{n-\ell}$
in $F \times \R_{\geq 0}^{n-\ell}$ to a neighbourhood of 
$F$ in $\Delta$. In the argument below, we therefore use these coordinates $(x,y_1, \ldots,
y_{n-\ell}) \in F \times (-\varepsilon, \varepsilon)^{n-\ell} \subseteq F
\times \R^{n-\ell}$ to parametrize a neighbourhood of $F$ in $\R^n$.

By assumption on $f_{\ell-1}$, there exists some $0 < \varepsilon' <
\varepsilon$ and a neighbourhood $W_1$ of $\del F$ in $F$ such that $\varphi^* f_{\ell-1}$ is defined on 
\(W_{\partial F} := W_1 \times (-\varepsilon', \varepsilon')^{n-\ell}.\) 
Let $W_2$ be the relative interior of $F$ in $\R^{\ell}$.
Then \(\{W_1, W_2\}\) form an open cover of $F$.
Let \(\{\rho_1, \rho_2\}\) be a partition of
unity subordinate to this cover. 
For $(x,y) \in F \times (-\veps',\veps')^{n-\ell}$, we define 
\begin{equation}\label{eq:def-tildef-ellF}
\tilde{f}_{\ell,F}(\varphi(x,y)) 
  := \rho_1(x) \varphi^* f_{\ell-1}(x,y) + \rho_2(x) \left(
  \varphi^* f_{\ell,F}(x) - \sum_{i=1}^{n-\ell} y_i \right).
\end{equation}
Since $\rho_2$ is supported in $W_2$,
there exists a neighbourhood of $\partial F$ in
$\R^{\ell}$ on which $\rho_2 \equiv 0$. This means that on a
neighbourhood in $\R^n$ of the $(\ell-1)$-skeleton near $F$,
\ \(\tilde{f}_{\ell,F} \equiv f_{\ell-1},\)  as desired. 
The only remaining claim needing proof is that $\tilde{f}_{\ell,F}$ satisfies the
derivative condition \(d(\tilde{f}_{\ell,F})(\xi_j) < 0\) 
on $F$ for $1 \leq j \leq n-\ell$.
Since $\tilde{f}_{\ell,F}$ agrees with
$f_{\ell-1}$ on a neighbourhood of $\partial F$, it suffices to check
this condition on \(\relint(F).\) 
Since $\varphi_* \left(\frac{\del}{\del y_j}\right) = \xi_j$ by
construction of $\varphi$, and the three functions 
$\rho_1, \rho_2,$ and $f_{\ell,F}$ are independent of the $y_j$
variables, we have 
\begin{align*}
d(\tilde{f}_{\ell,F})(\xi_j|_{\varphi(x,0)})
 & = \frac{\partial}{\partial y_j} \vert_{(x,0)}
\left[ \rho_1(x) (\varphi^* f_{\ell-1})(x,y) + \rho_2(x) \left( f_{\ell,F}(x) -
    \sum_{i=1}^{n-\ell} y_i \right) \right] \\
 & = \rho_1(x) (df_{\ell-1})(\xi_j|_x) - \rho_2(x) \\
 & < 0,
\end{align*} 
as desired, since \((df_{\ell-1}(\xi_j \vert_x) < 0\) by assumption and at least
one of $\rho_1$ or $\rho_2$ must be positive at any \(x \in F.\) 
This completes the proof of the claim and hence of the statements (A)
and (B) in 
Proposition~\ref{prop:bump-function new}.

It remains to justify the statement (C) of
Proposition~\ref{prop:bump-function new}. In a small enough
neighbourhood of the prescribed critical point
$x_F$ of a face $F$, the function defined
in~\eqref{eq:extension-on-ball} has the property that $\rho \equiv 0$
and hence, along the face $F$, is equal to $\zeta$. The explicit
formula for $\zeta$ in~\eqref{eq:def zeta} shows that, in appropriate
coordinates along the face $F$, the function $\zeta$ 
is quadratic in
the coordinates up to an affine translation in $\R$, as
desired. Moreover, the explicit formula for $\tilde{f}_{\ell,F}$
in~\eqref{eq:def-tildef-ellF} shows that with respect to the
coordinates $(x,y)$ in~\eqref{eq:def varphi} is linear in the
coordinates $y_j$ and decreases in the directions pointing into the
polytope $F$, again as desired. Since the vector fields $\xi_j$ are
also arranged to be constant sufficiently near $x_F$, the coordinates
$(x,y)$ are in fact affine. This concludes the proof of part (C) 
of Proposition~\ref{prop:bump-function new}
and hence of the entire proposition.


\begin{thebibliography}{AgaGod05}

\bibitem{AgaGod05}
Jose Agapito and Leonor Godinho, \emph{New polytope decompositions
and Euler-Maclaurin formulas for simple integral polytopes},
Adv.\ Math.\ \textbf{214} (2007), no.~1, 379--416.

\bibitem{Atiyah:convexity}
M. Atiyah, \emph{Convexity and commuting hamiltonians},
Bull.\ London Math.\ Soc.\ \textbf{14} (1982), 1--15.

\bibitem{AtiBot:1984}
M.\ F.\ Atiyah and R. Bott,
\emph{The moment map and equivariant cohomology,}
Topology \textbf{23} (1984), 1--28.

\bibitem{BerVer:1982}
N. Berline and M. Vergne,
\emph{Classes caract\'eristiques \'equivariantes.
Formule de localisation en cohomologie \'equivariante},
C.~R.~Acad.\ Sci.\ Paris S\'er.\ I Math.\ \textbf{295} (1982), 539--541.

\bibitem{BerVer:1983}
N. Berline and M. Vergne,
\emph{Z\'eros d'un champs de vecteurs et classes caract\'eristiques
\'equivariantes}, Duke Math.\ J.\ \textbf{50} (1983), 539--549.

\bibitem{Bott1}
R. Bott,
\emph{Vector fields and characteristic numbers}, 
Mich.\ Math.\ J.\ \textbf{14} (1967), 231--244.

\bibitem{Bott2}
R. Bott,
\emph{A residue formula for holomorphic vector fields}, 
J.\ Differential Geom.\ \textbf{1} (1967), 311--330.

\bibitem{Bra02}
Maxim Braverman, \emph{Index theorem for equivariant Dirac operators
on noncompact manifolds}, K-Theory \textbf{27}(1):61--101, 2002.

\bibitem{Bri1837}
C.\ J.\ Brianchon, \emph{Th\'eor\`eme nouveau sur les poly\`edres},
J.\ Ecole Polytechnique, 15:317--319, 1837.

\bibitem{BJ}
Th.\ Br\"ocker and K.\ J\"anich,
\emph{Introduction to differential topology},
Cambridge University Press, 1982.

\bibitem{carrell}
J. Carrell, 
\emph{A remark on the Grothendieck residue map},
Proc.\ Amer.\ Math.\ Soc.\ \textbf{70} (1978), no.~1, 43--48.

\bibitem{CdS+G}
A. Cannas da Silva and V. Guillemin,
\emph{On the Kostant multiplicity formula for group actions 
with non-isolated fixed points},  Adv.\ Math.\ 123 (1996),  
no.~1, 1--15. 

\bibitem{CdS01} 
A. Cannas da Silva, \emph{Lectures on symplectic geometry},
Springer-Verlag, Lecture Notes in Mathematics, 2001. 

\bibitem{DuiHec82}
J.~J. Duistermaat and G.~J. Heckman. 
\emph{{O}n the variation in the cohomology of the symplectic
  form of the reduced phase space},
Invent.\ Math.\ \textbf{69} (1982), no.~2, 259--268.

\bibitem{DuiHec83}
J.~J. Duistermaat and G.~J. Heckman. Addendum to: 
\emph{{O}n the variation in the cohomology of the symplectic
  form of the reduced phase space},
Invent.\ Math.\ \textbf{72} (1983), no.~1, 153--158.

\bibitem{GGK1}
V.Ginzburg, V. Guillemin, Y. Karshon, 
\emph{Cobordism theory and localization formulas 
for Hamiltonian group actions},
Int.\ Math.\ Res.\ Notices \textbf{5} (1996), 221--234. 

\bibitem{GGK}
V.Ginzburg, V. Guillemin, Y. Karshon, \emph{Moment maps, Cobordisms,
and Hamiltonian group actions}, Amer.\ Math.\ Soc.,
Math.\ Surveys and Monographs vol.\ \textbf{98}, Providence, RI, 2002.

\bibitem{Gram1874}
J.\ P.\ Gram, \emph{Om rumvinklerne i et polyeder},
Tidsskrift for Math.\ (Copenhagen), \textbf{4}(3):161--163, 1874.

\bibitem{GuiCdS96}
V.\ Guillemin and A.\ Cannas da Silva. \emph{On the Kostant
  multiplicity formula for group actions with non-isolated fixed
  points}, Adv.\ Math.\ \textbf{123} (1996), no.1, 1--15. 

\bibitem{GLS}
V. Guillemin, E. Lerman, and S. Sternberg, \emph{Symplectic Fibrations
and Multiplicity Diagrams}, Cambridge University Press, 1994.

\bibitem{GP}
V. Guillemin and A. Pollack, \emph{Differential Topology},
Prentice Hall (1974).

\bibitem{GuiSte:convexity}
V.\ Guillemin  and S.\ Sternberg, \emph{Convexity properties of the
moment mapping}, Invent.\ Math.\ \textbf{67} (1982), 491--513.

\bibitem{GS82-preprint}
V.~Guillemin and S.~Sternberg, \emph{Cohomological properties of the
  moment mapping}, unpublished preprint 1982.

\bibitem{Haa05}
Christian Haase, \emph{Polar decomposition and {B}rion's theorem},
in {\em Integer points in polyhedra---geometry, number theory,
  algebra, optimization}, volume 374 of {\em Contemp. Math.}, pages 91--99.
  Amer. Math. Soc., Providence, RI, 2005.

\bibitem{hatcher}
A.\ Hatcher, \emph{Algebraic topology}, 
Cambridge University Press, 2002.


\bibitem{JefKir:1995}
L.\ Jeffrey, and F.\ Kirwan, 
\emph{Localization for nonabelian group actions},
Topology \textbf{34} (1995) no. 2, 291--327.

\bibitem{K}
 Y. Karshon, \emph{Moment maps and non-compact cobordisms},
 J.\ Differential Geometry \textbf{49} (1998), 183--201.

\bibitem{Law91}
J. Lawrence,  \emph{Polytope volume computation}, Math. Comp., 
\textbf{57}(195):259--271, 1991. 


\bibitem{Liu:1995}
K. Liu, \emph{Remarks on nonabelian localization},
Int. Math. Res. Notices, \textbf{13}:683--691, 1995. 

\bibitem{Mar00}
S. Martin, \emph{Transversality theory, cobordisms, and invariants of
  symplectic quotients}, arXiv:math.SG/0001001. 

\bibitem{Par00}
Paul-Emile Paradan,
\emph{The moment map and equivariant cohomology with generalized
  coefficients},
Topology, 39(2):401--444, 2000.

\bibitem{PraWu94}
E. Prato and S. Wu, \emph{Duistermaat-Heckman measures in a non-compact
setting}, Compositio Math., \textbf{94}(2):113--128, 1994.

\bibitem{She67}
G. C. Shephard, \emph{An elementary proof of Gram’s theorem for convex 
polytopes}, Canad. J. Math., \textbf{19}:1214–1217, 1967.


\bibitem{Tu06}
L.\ Tu, \emph{The life and works of Raoul Bott}, Notices 
Amer. Math. Soc.,
\textbf{53} (2006), no. 5, 554--570. 

\bibitem{Var87}
A. N. Varchenko, \emph{Combinatorics and topology of the arrangement of 
aﬃne hyperplanes in the real space}, Funktsional. Anal. i Prilozhen., 
\textbf{21}(1):11–22, 1987.


\bibitem{Witten:1992}
E.\ Witten, \emph{Two dimensional gauge theories revisited},
J.\ Geom.\ Phys.~\textbf{9}:303--368, 1992.

\bibitem{Woo05}
Chris~T. Woodward, 
\emph{Localization for the norm-square of the moment map 
and the two-dimensional {Y}ang-{M}ills integral}, 
J. Symplectic Geom., 3(1):17--54, 2005.

\end{thebibliography}
\end{document}